\declaretheoremstyle[notefont=\normalfont\bfseries,bodyfont=\normalfont\itshape]{mystyle}
\declaretheorem[style=mystyle,numberwithin=section]{theorem}
\declaretheorem[style=mystyle,sibling=theorem]{lemma}
\declaretheorem[style=mystyle,sibling=theorem]{definition}
\declaretheorem[style=mystyle,sibling=theorem]{claim}
\newcommand{\N}{\mathbb{N}}
\DeclarePairedDelimiter\abs{\lvert}{\rvert}
\DeclarePairedDelimiter\norm{\lVert}{\rVert}
\DeclarePairedDelimiter\floor{\lfloor}{\rfloor}
\DeclarePairedDelimiter\ceil{\lceil}{\rceil}
\let\oldabs\abs
\def\abs{\@ifstar{\oldabs}{\oldabs*}}
\let\oldnorm\norm
\def\norm{\@ifstar{\oldnorm}{\oldnorm*}}
\let\oldfloor\floor
\def\floor{\@ifstar{\oldfloor}{\oldfloor*}}
\let\oldceil\ceil
\def\ceil{\@ifstar{\oldceil}{\oldceil*}}
\renewcommand{\epsilon}{\ensuremath\varepsilon}
\renewcommand{\phi}{\ensuremath{\varphi}}
\let\originalleft\left
\let\originalright\right
\renewcommand{\left}{\mathopen{}\mathclose\bgroup\originalleft}
\renewcommand{\right}{\aftergroup\egroup\originalright}
\DeclareMathOperator*{\argmax}{arg\,max}
\newcommand{\ccint}[2] { \left[ {#1},{#2} \right] }
\newcommand{\set}[1] { \left[ {#1} \right] }
\renewcommand*{\Pr}[1]{\mathrm{Pr}\left[#1\right]}
\newcommand*{\E}[1]{\mathrm{E}\left[#1\right]}
\newcommand{\Var}[1]{ \mbox{Var} \left[ {#1} \right] }
\NewDocumentCommand\G{gggg}{%
	\IfNoValueTF{#1}{\mathcal{G}}{%
	\IfNoValueTF{#2}{\mathcal{G}\left({#1}\right)}{%
    \IfNoValueTF{#3}{\mathcal{G}\left({#1}, {#2}\right)}{%
    \IfNoValueTF{#4}{\mathcal{G}\left({#1}, {#2}, {#3}\right)}{%
				\mathcal{G}\left({#1}, {#2}, {#3}, {#4}\right)}}}}%
}
\NewDocumentCommand\reg{gg}{%
	\IfNoValueTF{#1}{\reg{\epsilon}}{%
	\IfNoValueTF{#2}{\(\left({#1}\right)\)-regular}{\(\left({#1}, {#2}\right)\)-regular}}%
}
\NewDocumentCommand\lreg{gggg}{%
	\IfNoValueTF{#1}{\lreg{\epsilon}}{%
	\IfNoValueTF{#2}{\(\left({#1}\right)\)-lower-regular}{%
    \IfNoValueTF{#3}{\(\left({#1}, {#2}\right)\)-lower-regular}{%
    \IfNoValueTF{#4}{\(\left({#1}, {#2}, {#3}\right)\)-lower-regular}{\(\left({#1}, {#2}, {#3}, {#4}\right)\)-lower-regular}}}}%
}
\newcommand*{\uf}[2]{\(\left({#1},{#2}\right)\)-upper-uniform}
\newcommand*{\ue}[2]{\(\left({#1},{#2}\right)\)-upper-extensible}
\renewcommand*{\O}[1]{O\left({#1}\right)}
\renewcommand*{\o}[1]{o\left({#1}\right)}
\newcommand*{\Om}[1]{\Omega\left({#1}\right)}
\newcommand*{\om}[1]{\omega\left({#1}\right)}
\newcommand*{\The}[1]{\Theta\left({#1}\right)}
\newcommand*{\eps}{\epsilon}
\title{Online Ramsey Games for more than two colors}
\author{Andreas Noever\footnote{author was supported by grant no.\ 200021 143338  of the Swiss National Science Foundation.}}
\affil{Institute of Theoretical Computer Science \\ ETH Zürich, 8092 Zürich, Switzerland \\ anoever@inf.ethz.ch}
\date{}
\begin{document}


    \maketitle

\begin{abstract}
Consider the following one-player game played on an initially empty graph with $n$ vertices. At each stage a randomly selected new edge is added and the player must immediately color the edge with one of $r$ available colors. Her objective is to color as many edges as possible without creating a monochromatic copy of a fixed graph $F$.

We use container and sparse regularity techniques to prove a tight upper bound on the typical duration of this game with an arbitrary, but fixed, number of colors for a family of $2$-balanced graphs. The bound confirms a conjecture of Marciniszyn, Spöhel and Steger  and yields the first tight result for online graph avoidance games with more than two colors.
\end{abstract}

\renewcommand{\sectionautorefname}{Section}
\renewcommand{\subsectionautorefname}{Section}
\renewcommand{\subsubsectionautorefname}{Section}

\section{Introduction}

Consider the following one-player game played on an initially empty graph on $n$ vertices. In every round we insert a new edge chosen uniformly at random among all non-edges of the graph. The player, henceforth called Painter, must immediately color this edge with one of $r$ available colors. Her objective is to avoid a monochromatic copy of some fixed graph $F$ for as long as possible. We refer to this game as the online $F$-avoidance game with $r$ colors.

We call $N_0\left(F,r,n\right)$ a threshold function for the online $F$-avoidance game with $r$ colors if for every $N \ll N_0\left(F,r,n\right)$ there exists a strategy for Painter that survives for $N$ rounds with high probability and if for every $N\gg N_0\left(r,n\right)$ every strategy fails to survive for $N$ rounds with high probability. Note that such a threshold function always exists \cite[Lemma 2.1]{CPC:4654516}.

This game was first studied by Friedgut, Kohayakawa, Rödl, Ruci\'nski and Tetali, who have shown in \cite{Friedgut:2003:RGA:970118.970124} that $N_0\left(K_3, 2,n\right) = n^{4/3}$ is a threshold for the online triangle-avoidance game with two colors. In 2009 Marciniszyn, Spöhel and Steger proved the following lower bound
\begin{theorem}[\cite{CPC:4654516}]
Let $F$ be a graph that is not a forest, and let $r\geq 1$. Then the online $F$-avoidance game with $r$ colors has a threshold $N_0\left(F,r,n\right)$ that satisfies
\[ N_0\left(F,r,n\right)\geq n^{2-1/\overline{m}^r_2\left(F\right)}, \]
where $\overline{m}^2_r\left(F\right)$ is given by
\[
	\overline{m}_2^r\left(F\right) \coloneqq
\begin{cases}
	\max_{H\subseteq F}\frac{e_H}{v_H} & \text{if $r=1$,} \\
	\max_{H\subseteq F}\frac{e_H}{v_H-2+1/\overline{m}^{r-1}_{2}\left(F\right)} & \text{if $r\geq 2$.}
\end{cases}
\]
\end{theorem}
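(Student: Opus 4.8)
The plan is to prove the lower bound constructively, by induction on $r$: I will exhibit for every $r$ a strategy $\mathcal{S}_r$ for Painter together with a fixed graph $W_r$ with $v_{W_r}/e_{W_r}=1/\overline{m}_2^r(F)$ such that, as a purely deterministic fact, $\mathcal{S}_r$ loses on an edge sequence only if the graph spanned by that sequence contains a copy of $W_r$. This suffices: the expected number of copies of $W_r$ in the random graph with $N$ edges is of order $\bigl(N\,n^{1/\overline{m}_2^r(F)-2}\bigr)^{e_{W_r}}$, which is $o(1)$ once $N\ll n^{2-1/\overline{m}_2^r(F)}$, so by Markov's inequality $\mathcal{S}_r$ survives $N$ rounds with high probability. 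For $r=1$ Painter has no choice and loses exactly when $F$ appears; taking for $W_1$ a densest subgraph $H_1^*\subseteq F$ (so that $v_{W_1}/e_{W_1}=1/\overline{m}_2^1(F)$) and using that every copy of $F$ contains a copy of $H_1^*$ settles the base case.

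\textbf{The recursive step.} Suppose $\mathcal{S}_{r-1}$ and $W_{r-1}$ are already constructed. Fix a subgraph $H_r^*\subseteq F$ attaining $\overline{m}_2^r(F)=\max_{H\subseteq F}e_H/(v_H-2+1/\overline{m}_2^{r-1}(F))$. Define $\mathcal{S}_r$ as follows: colour each incoming edge $e$ with colour $r$, unless this would complete a copy of $H_r^*$ inside colour $r$, in which case colour $e$ according to $\mathcal{S}_{r-1}$ applied to the subsequence of all such ``exceptional'' edges with palette $\{1,\dots,r-1\}$. Since $H_r^*\subseteq F$ and colour $r$ stays $H_r^*$-free, colour $r$ is always $F$-free, so $\mathcal{S}_r$ loses only if $\mathcal{S}_{r-1}$ loses on the exceptional subsequence. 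Now build $W_r$ from $W_{r-1}$ by attaching, to each edge $\{u,v\}$ of $W_{r-1}$, a copy of $H_r^*$ with one edge removed, the endpoints of the removed edge being identified with $u$ and $v$ and the other $v_{H_r^*}-2$ vertices being new and pairwise disjoint. Each exceptional edge $e$ was refused colour $r$ because at that moment colour $r$ already contained a copy of $H_r^*$ with an edge removed whose two missing endpoints are those of $e$; decorating in this way every edge of the copy of $W_{r-1}$ that the inductive hypothesis produces inside the exceptional subsequence yields a copy of $W_r$ in the host graph. Finally, from $v_{W_r}=v_{W_{r-1}}+e_{W_{r-1}}(v_{H_r^*}-2)$ and $e_{W_r}=e_{W_{r-1}}e_{H_r^*}$ one gets $v_{W_r}/e_{W_r}=(1/\overline{m}_2^{r-1}(F)+v_{H_r^*}-2)/e_{H_r^*}=1/\overline{m}_2^r(F)$, which is exactly the recursion defining $\overline{m}_2^r(F)$.

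\textbf{The main difficulty.} The crux is making the implication ``$\mathcal{S}_r$ loses $\Rightarrow$ $W_r$ appears'' airtight; it is entirely combinatorial, and the reason the induction is phrased as this deterministic statement — rather than as a probabilistic claim about $\mathcal{S}_{r-1}$ — is that $\mathcal{S}_{r-1}$ gets fed the exceptional subsequence, which is \emph{not} a uniformly random graph process. Two further points are routine but delicate. First, in an actual play the attached copies of $H_r^*$ with an edge removed need not be vertex-disjoint from one another, so the first-moment estimate must be carried out over the constantly many ways they can overlap, checking in each case that the resulting homomorphic image of $W_r$ is at least as dense as $W_r$ and therefore no more frequent in $G(n,N)$. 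Second, when $H_r^*$ is not edge-transitive, ``$H_r^*$ with one edge removed'' stands for a bounded family of graphs, over which one also unions. Once this bookkeeping is in place, the recursion for $\overline{m}_2^r(F)$ has been set up precisely so that $W_r$ sits at the threshold $n^{2-1/\overline{m}_2^r(F)}$, yielding the claimed bound.
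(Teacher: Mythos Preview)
The present paper does not prove this theorem; it is quoted from \cite{CPC:4654516} as background for the matching upper bound, which is the paper's actual contribution. So there is no in-paper proof to compare against. Your outline is exactly the argument of \cite{CPC:4654516}: the greedy strategy that reserves colour $r$ for edges not closing a copy of $H_r^*$, the recursively defined witness graph $W_r$, and a first-moment bound. The classes $\mathcal F^k$ and the density computations in \autoref{lemma:f star density} and \autoref{prop:FtimesFstar density} of the present paper are the upper-bound mirror images of the balancedness facts you need for $W_r$.

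Two remarks on the write-up. First, the deterministic induction hypothesis should from the start be stated for a finite \emph{family} $\mathcal W_r$ of witness graphs (all overlap variants), not for a single $W_r$: the copy of $W_{r-1}$ you obtain inductively is itself only guaranteed to be some $W'\in\mathcal W_{r-1}$, and the decorations you attach to its edges may in turn overlap one another and the core. Phrasing the induction as ``$\mathcal S_r$ loses $\Rightarrow$ some $W'\in\mathcal W_r$ appears'' makes the recursion clean.

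Second, and more substantively, you call the verification that every $W'\in\mathcal W_r$ satisfies $d(W')\ge\overline m_2^r(F)$ ``routine but delicate''. This is the technical heart of the lower bound and is not automatic: identifying vertices of two colour-$r$ decorations can merge edges, so the density of a homomorphic image can in principle drop. What makes it work is the specific colour structure---the core $W'_{r-1}$ lives in colours $\{1,\dots,r-1\}$ and the decorations in colour $r$, so edges of the core never merge with decoration edges---together with an inductive argument that any merged configuration of decorations over a core of density at least $\overline m_2^{r-1}(F)$ still has overall density at least $\overline m_2^r(F)$. In \cite{CPC:4654516} this is carried out carefully; it is precisely where the recursion defining $\overline m_2^r$ earns its keep, and it deserves more than a parenthetical.
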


In an accompanying paper they provide matching upper bounds in the two color case for a large class of graphs, which includes cycles and cliques:
\begin{theorem}[\cite{CPC:4654504}]
Let $F$ be a graph that is not a forest which has a subgraph $F_-\subset F$ with $e_F - 1 $ edges satisfying
\[
m_2\left(F_-\right)\leq \overline{m}^2_2\left(F\right).
\]
Then the threshold for the online $F$-avoidance coloring game with two colors is
\[
N_0\left(F,2,n\right)=n^{2-1/\overline{m}^2_2\left(F\right)}.
\]
\end{theorem}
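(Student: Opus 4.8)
The lower bound $N_0(F,2,n)\ge n^{2-1/\overline{m}^2_2(F)}$ is the case $r=2$ of the theorem of \cite{CPC:4654516}, so the plan is to establish the matching upper bound: writing $m\coloneqq\overline{m}^2_2(F)$, I would show that for every $N\gg n^{2-1/m}$ and every online strategy of Painter a monochromatic copy of $F$ appears within $N$ rounds with high probability. By monotonicity it suffices to treat $N=Cn^{2-1/m}$ with $C$ a large constant, exposing the edges in two stages of $\Theta(N)$ edges each, so that after stage $1$ the revealed graph is distributed like $\mathcal{G}(n,p)$ with $p=\Theta(n^{-1/m})$. After stage $1$ one of the colours — say red — has been used on at least half of the edges; let $R$ be the red graph, let $F_-\subset F$ be the $(e_F-1)$-edge subgraph of the hypothesis (which we may take to be $F$ with one edge deleted, rooted at the two endpoints of that edge), and let $P\subseteq\binom{[n]}{2}$ be the set of pairs $\{x,y\}$ that are the ``missing edge'' of a red copy of $F_-$ inside $R$, i.e.\ for which $R$ contains a copy of $F_-$ to which adding $\{x,y\}$ yields a copy of $F$. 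The first key claim is that $P$ itself contains at least $c_0\,n^{e_F/m}$ copies of $F$ — with $c_0=c_0(C)$ arbitrarily large once $C$ is — almost all of whose edges lie outside the stage-$1$ graph. This is equivalent to saying that $R$ contains many copies of the ``blow-up'' $\widehat{F}$ of $F$, obtained by replacing each edge of $F$ by a vertex-disjoint copy of $F_-$ glued along its two root vertices: an all-red copy of $\widehat{F}$ exhibits one copy of $F$ in $P$ on its core vertices, and since at density $n^{-1/m}$ a fixed pair roots only $o(1)$ copies of $F_-$ in $\mathcal{G}(n,p)$, the map ``$\widehat{F}$-copy $\mapsto$ $F$-in-$P$-copy'' is $O(1)$-to-one and the two counts are comparable.

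Granting the claim, stage $2$ finishes quickly. Reveal $\Theta(N)$ further edges, i.e.\ a random subset of that size of the non-edges of the stage-$1$ graph; a fixed $F$-copy inside $P$ has all $e_F$ of its edges revealed with probability of order $p^{e_F}=\Theta(n^{-e_F/m})$, so the expected number of fully revealed $F$-in-$P$-copies is $\Om{c_0}\to\infty$, and by the spread of these copies a second-moment estimate makes at least one appear with high probability. Fix such a copy, on vertex set $W$. As its $e_F$ edges are revealed Painter must colour them; but for each edge $\{x,y\}$ there is, by the definition of $P$, a red copy $\rho_{xy}$ of $F_-$ in $R$ with $\rho_{xy}+\{x,y\}\cong F$, so colouring $\{x,y\}$ red completes a red copy of $F$ on the spot. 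Hence every edge of the $F$-copy on $W$ is coloured blue — and those $e_F$ blue edges form a blue copy of $F$. Either way a monochromatic $F$ has been created.

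The substance is the first key claim, which rests on two facts about $\mathcal{G}(n,p)$ at $p=\Theta(n^{-1/m})$. First, $p$ lies at or above the R\"odl--Ruci\'nski threshold $n^{-1/m_2(F_-)}$ of $F_-$ — this is exactly the hypothesis $m_2(F_-)\le\overline{m}^2_2(F)$, with equality allowed since at the threshold, for $C$ large, one still has the Ramsey property and, more to the point, supersaturation. Second, a \emph{sparse, relative} supersaturation statement in the spirit of the Kohayakawa--\L uczak--R\"odl conjecture (now a theorem): with high probability every subgraph of $\mathcal{G}(n,p)$ that keeps a positive fraction of its edges contains $\Om{n^{v_H}p^{e_H}}$ copies of each relevant configuration $H$ — notably $F_-$ rooted at a pair, and $\widehat{F}$. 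This is exactly the place for the hypergraph container method together with the sparse regularity and counting lemmas, and where the balancedness of $F$ (implicit in the hypothesis on $F_-$) is used, since it is what makes every such $H$ admissible for the counting lemma and forces the resulting exponent to collapse to $2-1/\overline{m}^2_2(F)$ rather than something weaker. Feeding this to the red graph $R$ yields the abundance of $\widehat{F}$-copies, and the density bookkeeping above converts it into the required abundance of $F$-copies in $P$, with enough spread for the stage-$2$ argument.

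A few points are routine: the $F$-copies in $P$ avoid the stage-$1$ graph on almost all edges (it has only $o(n^2)$ edges) and are never destroyed, since edges and colours are never removed; Painter's adaptivity and the random order within a stage are harmless, because the supersaturation statement is uniform over \emph{all} $2$-colourings, so one conditions on the stage-$1$ edge set and applies it to whatever Painter produced; and ``positive expectation $\Rightarrow$ exists with high probability'' is a standard hypergeometric / second-moment estimate once the relevant copies are known to be spread out. The genuine obstacle is the sparse relative supersaturation: a bare R\"odl--Ruci\'nski $1$-statement gives only one monochromatic $F_-$, which is useless against an adaptive player, so one really needs the linear-in-the-count version with control on how concentrated the copies can be, and the calculation must be tight enough that the exponent comes out as $\overline{m}^2_2(F)$; everything else — the two-stage exposure, the forced-colour bookkeeping, and the final contradiction — is conceptually straightforward once that input is available.
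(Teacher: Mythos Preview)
Your two-round strategy --- force many monochromatic copies of the blow-up $\widehat F = F\times(e,F_-)$ in stage~1 via sparse supersaturation, then close one inner copy of $F$ in stage~2 by a second-moment argument --- is exactly the approach of \cite{CPC:4654504} and of this paper's $r=2$ specialisation (Lemma~\ref{lemma:dangerous} with $F^*=F$, followed by the proof of the main theorem).

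The one place your sketch is too quick is the appeal to K\L R-type supersaturation for $\widehat F$ at $p\asymp n^{-1/\overline m_2^2(F)}$: this needs $m_2(\widehat F)\le\overline m_2^2(F)$, which you never check. It is \emph{not} automatic --- $\widehat F$ contains many subgraphs that straddle several attached copies of $F_-$, and one must bound the $2$-density of all of them, not just of a single $F_-$. Your observation that $p$ sits above $n^{-1/m_2(F_-)}$ handles only the one-copy subgraphs; the full inequality is precisely where the hypothesis $m_2(F_-)\le\overline m_2^2(F)$ does its work, and it is the $k=r$ case of Lemma~\ref{lemma:klr density} here. A smaller point: the step ``a fixed pair roots only $o(1)$ copies of $F_-$, hence the map $\widehat F\text{-copy}\mapsto F\text{-in-}P\text{-copy}$ is $O(1)$-to-one'' is muddled --- an $o(1)$ expectation does not bound the maximum fibre. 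The paper sidesteps this by running the second-moment argument directly on the $\widehat F$-copies (allowing distinct ones to share their inner $F$) and controlling the variance via the balancedness of $F\times(e,F)$; see Proposition~\ref{prop:FtimesFstar density} and the proof of the main theorem.
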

They conjecture that a similar result is true for all $r\geq 3$.

For three or more colors no tight upper bounds are known. The corresponding offline triangle avoidance game (where Painter gets to see all $N$ edges at once) has a threshold given by $N=n^{3/2}$ \cite{rodl1995}. Clearly this upper bound also applies to the online game. In \cite{doi:10.1137/110826308} Belfrage, Mütze and Spöhel connected the probabilistic one player game to a deterministic two player game originally introduced by Kurek and Ruci\'{n}ski in \cite{KurekRucinski05}. In this version of the game the edges are no longer presented in a random order but can be chosen by a second player called Builder. They show that if there exists a winning strategy for Builder which only creates subgraphs of density at most $d$, then $n^{2-1/d}$ is an upper bound for the threshold of the original probabilistic game.

This technique was used by Balogh and Butterfield in \cite{Balogh20103653} to improve the upper bound for the online triangle avoidance game to $n^{3/2 - c_r}$ for some constant $c_r>0$. Thus the thresholds of the online and offline games differ. Still their upper bound of $n^{2/3-c_r}$ does not match the lower bound provided by Marciniszyn, Spöhel and Steger.

Our contribution is an upper bound, which matches the lower bound Marciniszyn, Spöhel and Steger, for an arbitrary number of colors. That is we show the following:
\begin{theorem}\label{theorem:main theorem}
Let $F$ be a $2$-balanced graph that is not a tree which has a subgraph $F_-\subset F$ with $e_F-1$ edges satisfying
\[m_2(F_-)\leq \overline m_2^2(F).\]
Then the threshold for the online $F$-avoidance game with $r$ colors is
\[ N_0\left(F,r,n\right)= n^{2-1/\overline{m}^r_2\left(F\right)}. \]
\end{theorem}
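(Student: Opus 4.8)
Since the lower bound is the quoted theorem of Marciniszyn, Sp{\"o}hel and Steger, I would only prove the matching upper bound: for a sufficiently large constant $C = C(F,r)$ and $p := n^{-1/\overline{m}_2^r(F)}$, with high probability $G(n, Cn^2p)$ admits no online $r$-colouring of its edges without a monochromatic $F$. Fix an online Painter strategy, reveal the first $Cn^2p$ edges in $r$ consecutive batches $G_1, \dots, G_r$, each of size $\Theta(n^2p)$, and suppose Painter survives. The argument mimics the classical strategy by which a Builder forces a monochromatic $F$ while keeping the host sparse, but it exploits the randomness of the $G_i$ to gain the extra efficiency that drives the exponent all the way down to $2 - 1/\overline{m}_2^r(F)$. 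Inductively, after batch $i$ ($1 \le i \le r-1$) I want a family $\mathcal{D}_i$ of non-edges of $G_1 \cup \dots \cup G_i$ that is \emph{dangerous for the first $i$ colours} -- for the colouring produced so far, none of the pairs in $\mathcal{D}_i$ could be legally coloured with any of colours $1, \dots, i$ were it to appear next -- and, crucially, $\mathcal{D}_i$ should be not merely large but lower-regular of density $\Theta(n^{-\gamma_i})$, with the exponents $\gamma_i$ governed by the recursion behind $\overline{m}_2^r(F)$. One starts from $\mathcal{D}_0$ equal to all pairs; once $\mathcal{D}_{r-1}$ is in hand the kill is immediate, since each edge of $G_r$ lying in $\mathcal{D}_{r-1}$ must receive colour $r$, so any copy of $F$ spanned by such edges is monochromatic -- and the bookkeeping is arranged precisely so that $G_r \cap \mathcal{D}_{r-1}$ contains a copy of $F$ with high probability.

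The inductive step produces $\mathcal{D}_i$ from $\mathcal{D}_{i-1}$ using batch $i$. Since $G_i$ is revealed fresh while $\mathcal{D}_{i-1}$ is essentially a fixed lower-regular graph, a $\Theta(n^{-\gamma_{i-1}})$-proportion of the edges of $G_i$ fall inside $\mathcal{D}_{i-1}$; each of those is forbidden for colours $1, \dots, i-1$ and hence receives one of the other $r-i+1$ colours, so by pigeonhole a constant fraction of them -- those coloured $i$, say -- form a subgraph $R \subseteq \mathcal{D}_{i-1}$ retaining a positive proportion of $|G_i \cap \mathcal{D}_{i-1}|$. Here the hypotheses enter: because $m_2(F_-) \le \overline{m}_2^2(F) \le \overline{m}_2^i(F)$ -- and because $F$ is $2$-balanced, so the recursion for $\overline{m}_2^r(F)$ is attained at $F$ itself and the batch density is exactly right -- $R$ lies above the counting threshold for $F_-$, hence contains many copies of $F_-$; tracking how these distribute, a positive fraction of the pairs completing them to copies of $F$ lie back in $\mathcal{D}_{i-1}$, and that intersection, again lower-regular, is $\mathcal{D}_i$. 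The density and copy-count arithmetic of a single step reproduces exactly the passage $\overline{m}_2^{i-1}(F) \mapsto \overline{m}_2^i(F)$, which is why all $r$ batches have the same size $\Theta(n^2p)$: this is the coarsest choice for which the chain of $r$ steps closes up, with every intermediate slack absorbed into $C$. In fact only the final step -- finding a copy of $F$ in $G_r \cap \mathcal{D}_{r-1}$, where the expected count is $\Theta(1)$ -- is tight, which is why $C$ must be large and one needs a second-moment argument on $\mathcal{D}_{r-1}$.

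Making these counting statements rigorous is what the container and sparse regularity machinery is for, since the expected copy-counts are only polynomially large and the subgraphs $R$ are adversarially chosen inside sparse random graphs, so that neither the dense regularity lemma nor a crude second moment applies. Two inputs are needed. First, the sparse Kohayakawa--{\L}uczak--R{\"o}dl counting lemma together with the sparse regularity lemma: any subgraph retaining a positive proportion of the edges of a lower-regular host contains the expected order of magnitude of copies of $F_-$, distributed regularly enough that a fixed fraction of their completing pairs land in $\mathcal{D}_{i-1}$ and bequeath lower-regularity to $\mathcal{D}_i$. Second, a hypergraph container theorem for $F$-free graphs: any colour class that can arise lies in one of at most $\exp(o(n^2p))$ container graphs, each sparse and, up to few edges, a bounded union of sparse-regular pairs; fixing such a container ($\exp(o(n^2p))$ choices) and the relabelling of colours ($O(1)$ choices) at each of the $O(1)$ levels, one bounds the conditional failure probability by $\exp(-\Omega(\cdot))$ via the counting lemma plus concentration and takes a union bound. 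Morally, the container step is what guarantees that the pseudorandom structure of each $\mathcal{D}_{i-1}$ is forced by the random graph rather than by Painter.

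The genuine obstacle, I expect, is not any individual estimate but propagating the pseudorandomness cleanly through all $r$ levels while losing only multiplicative constants, so that the final threshold comes out as $n^{2 - 1/\overline{m}_2^r(F)}$ and not $n^{2 - 1/\overline{m}_2^r(F) + o(1)}$. One must carry a genuine lower-regularity guarantee for each $\mathcal{D}_i$ (not just a size bound), check it is strong enough to apply the embedding lemma to the adversarial $R \subseteq \mathcal{D}_{i-1}$ at the next level, verify that $\mathcal{D}_i$ inherits lower-regularity with parameters still under control after $r$ iterations, and reconcile the $o(n^2p)$-type errors natural to container arguments with the need for an exact polynomial density recursion. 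That coordination across the $r$ levels is where I expect the bulk of the work to lie.
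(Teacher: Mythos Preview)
Your plan is essentially the paper's strategy: iterate through the colours, at each step use a K\L R-type counting lemma inside a monochromatic subgraph to produce a lower-regular family of ``dangerous'' objects for the next step, and finish with a second-moment argument. Two points, however, are genuinely underspecified and one of them is a real obstacle.

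First, the sentence ``a positive fraction of the pairs completing them to copies of $F$ lie back in $\mathcal{D}_{i-1}$'' is precisely the crux, and it is \emph{not} a consequence of the standard K\L R counting lemma. What you need is a constrained version: given an $(\eps)$-regular $F_-$-partite graph $G$ and a prescribed lower-regular set $G_R$ of \emph{root pairs}, count partite copies of $F_-$ whose root pair lies in $G_R$. The paper isolates exactly this as its main new technical ingredient (a rooted K\L R statement, proved via containers applied to the hypergraph of rooted copies). This is where containers enter --- not, as you suggest, as ``containers for the colour classes'' of Painter, but to prove the rooted counting lemma that drives each inductive step. Your use of containers would have to be reworked accordingly.

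Second, and more seriously, your final step does not close. You carry a lower-regular \emph{pair} graph $\mathcal{D}_{r-1}$ and want $G_r\cap \mathcal{D}_{r-1}$ to contain a copy of $F$; for the second-moment argument this requires $\mathcal{D}_{r-1}$ itself to contain $\Omega(n^{v_F}d_{r-1}^{e_F})$ copies of $F$. Lower-regularity of a pair graph does not give this (a complete bipartite graph is lower-regular and triangle-free), and even with upper-uniformity the sparse counting lemma is a statement about \emph{typical} graphs, so you would still need probabilistic control over which $\mathcal{D}_{r-1}$ can arise. The paper explicitly notes that iterating the pair argument one more time ``is bound to fail'' at this density, and instead, for the last phase, tracks a $v_F$-uniform \emph{hypergraph} $A_i$ of dangerous $v_F$-tuples (those whose every $F$-edge already spans a dangerous $F_-^*$). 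Lower-regularity of $A_{r-1}$ is established via the rooted K\L R applied to $(V(F),\,F\times(e,F))$ rather than to $(e,F)$, and it directly yields $\Theta(n^{v_F}q_{r-1}^{e_F})$ dangerous $F$-tuples --- exactly what the second moment needs, with no embedding of $F$ into a sparse adversarial pair graph required. Replacing your $\mathcal{D}_i$ by such a tuple hypergraph in the last round is the missing idea.
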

The premise of our theorem is satisfied by a large class of graphs, which includes cycles and cliques. The condition that $F$ is $2$-balanced is used only for technical reasons. On the other hand the second condition is (in general) necessary. In \cite{CPC:4654516} the authors give an example of a graph (two triangles intersecting in a single vertex) for which the above threshold is incorrect.

To go from two to more colors we prove a generalization of the K\L R-conjecture. For two colors the (unmodified) K\L R-conjecture immediately tells us that Painter has to color on the order of $n^{v_{F_-}}p^{e_{F_-}}$ copies of $F_-$ with the majority color (where $p\asymp n^{-1/\overline m^2_2}$). In expectation a $p$-fraction of those copies of $F_-$ will form a copy of $F$ which contains one edge colored in the secondary color. The density of those edges is roughly $n^{v_{F}-2}p^{e_{F}} = n^{-1/m(F)}$ so we may expect them to form a copy of $F$. A.a.s.\ this is indeed the case and thus Painter looses the game after $\om{n^{2-1/\overline m_2^2(F)}}$ edges have been presented.

To generalize this argument to three colors we want to show that there exist copies of $F_-$ in the primary and secondary colors which share their missing (non)-edge. These non-edges, if they appear later, will have to be colored with the tertiary color and as before the $\overline m_2^r$ density is large enough to guarantee that a.a.s.\ Painter will have to close a copy of $F$ in the tertiary color.

To find the aforementioned copies we prove a variant of the K\L R-conjecture, which allows us find copies of $F_-$ where the missing edge lies in some fixed set of (non-)edges. The proof of this statement uses the container theorem introduced by Saxton and Thomason \cite{SaxtonThomason:container} and independently by Balogh, Morris and Samotij \cite{BaloghMorrisSamotij05}.

	\subsection{Preliminaries and Notation}

For $n\in \N$ let $\set{r}=\left\{1,\dots,r\right\}$. For sets $V, V'$  and $\eps\in\ccint{0}{1}$ we write $V'\subseteq_\eps V$ to denote that $V'$ is a subset of $V$ of cardinality at least $\eps \abs V$.  We say that a statement holds asymptotically almost surely (a.a.s.) if it holds with probability $1-\o{1}$.
The underlying uncolored graph of the game follows the random graph process $\left(G\left(n,N\right)\right)_{1\leq N \leq \binom{n}{2}}$, where the edges are added in an order selected uniformly at random from the $\binom{n}{2}!$ possible permutations. Let $G_{n,p}$ denote the binomial random graph on $n$ vertices where every edge is present with probability $p$ independently of all others. If $N\asymp p\binom{n}{2}$ then the two models are equivalent in terms of asymptotic properties \cite{janson2011random}. We will thus mostly work with $G_{n,p}$.

Let $G$ be a graph and let $R\subseteq V\left(H\right)$ denote an ordered subset of the vertices. We call the pair $\left(R, G\right)$ a rooted graph. We denote the number of vertices of $G$ with $v_G$ and the number of edges with $e_G$. For a rooted graph we set $\overline v_{R,G} = v_G - \abs R$ and $\overline e_{R,G} = e_G - e_{G[R]}$. For convenience we drop the dependence on $R$ if the set of roots is obvious from the context. That is $\overline v_G = \overline v_{R,G}$ and $\overline e_G = \overline e_{R,G}$. We write $H\subseteq_R G$ do denote that $H$ is a subgraph of $G$ with $R\subseteq V\left(H\right)$ and $H[R]=G[R]$. For a rooted graph $(R,F)$ we denote with $F_-$ the subgraph of $F$ obtained by removing all edges of $F[R]$. We write $(e,F)$ to indicate that the set of roots has cardinality two (this notation does not imply that $e\in E(F)$).

For two rooted graphs $(R, G)$, $(e, F)$ we denote with $(R,G)\times (e,F)$ the graph obtained by attaching to every non root edge $e'$ of $(R,G)$ a new copy of $F$ rooted in $e'$ (possibly removing the edge $e'$ if it is not present in $F$). In general one can choose to orient the attached copies of $F$ in two different ways. For our purposes the actual choice does not matter so we fix one based on the lexicographic ordering of the vertices.

For a collection of graphs $(R,G_1),\dots,(R,G_k)$  which agree on $R$ we denote with 
$\bigsqcup^k_{i=1} (R,G_i)$ the rooted graph obtained by joining pairwise disjoint copies of $G_1,\dots,G_k$ together at their roots.

For graphs we define the following densities (by convention $0/0=0$):
\begin{alignat*}{3}
 d\left(G\right) &= \frac{e_G}{v_G}  &
	m\left(G\right) &= \max_{H\subseteq G} d\left(H\right) \\
 d_1\left(G\right) &= \frac{e_G}{v_G-1} &
	m_1\left(G\right) &= \max_{H\subseteq G} d_1\left(H\right) \\
 d_2\left(G\right) &= \frac{e_G-1}{v_G-2} &
	m_2\left(G\right) &= \max_{H\subseteq G} d_2\left(H\right) \\
 \overline d^r_2\left(G, H\right) &= \begin{cases}
								\frac{e_H}{v_H} & \text{if } r = 1, \\
								\frac{e_H}{v_H-2+1/\overline m_2^{r-1}\left(G\right)} & \text {if } r \geq 2.
							\end{cases}&\quad
	\overline m^r_2\left(G\right) &= \max_{\substack{H\subseteq G}} \overline d^r_2\left(G,H\right)
\end{alignat*}
We say that a graph $G$ is (strictly) balanced with respect to a density function if the maximum is attained (uniquely) by $G$. We say that $G$ is balanced ($1$-balanced, $2$-balanced) if it is balanced with respect to $m$ ($m_1$, $m_2$).

One can check (see \cite{CPC:4654516}) that for every graph $G$ we have
\[
m(G) = \overline m^1_2(G) < \overline m^2_2(G)  < \dots < \overline m^r_2(G) < \dots <  m_2(G).
\]
Furthermore if $G$ is $2$-balanced then it is also balanced with respect to $\overline m^r_2$ for all $r$. It is also easy to check that for every graph $G$ which is not a forest
\[
m_1(G) \leq \overline m^2_2(G),
\]
and that if $G$ is additionally $2$-balanced then it is also strictly $1$-balanced.

The density of a rooted graph is defined by
\[
 d\left(R,G\right) = \frac{\overline e_G}{\overline v_G}  \quad\quad\quad\quad
m\left(R,G\right) = \max_{H\subseteq G} d\left(R\cap V(H),H\right).
\]
As in the unrooted case we call a rooted graph balanced if it is balanced with respect to $m$.

Assume that there exists $G'\subseteq G_{n,p}$ such that $G'\sim G-G[R]$. We say that $G'$ is a copy of $G-G[R]$ in $G_{n,p}$ and that the vertices of $G'$ which correspond to the roots of $(R,G)$ span a copy of $(R,G)$. Observe that the edges between root vertices are immaterial. We will make heavy use of the following upper bound due to Spencer on the number of rooted graphs spanned by vertices of the random graph.
\begin{theorem}[\cite{Spencer:countingextensions}]\label{theorem:spencer}
Let $(R,G)$ be a rooted graph and suppose that $t>m(R,G)$ and $p(n)=\Om{n^{-1/t}}$.  Then a.a.s.\ in $G_{n,p}$ every $\abs R$-tuple of vertices spans $(1\pm\o{1})\mu$ copies of $(R,G$) where $\mu\asymp n^{\overline v_G} p^{\overline e_G}$ is the expected number of such copies.
\end{theorem}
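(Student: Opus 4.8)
The plan is to establish a sharp concentration estimate for the number of extensions rooted at a \emph{single} tuple, with a failure probability small enough to survive a union bound over all $n^{\abs R}$ tuples. Fix an ordered $\abs R$-tuple $w$ of distinct vertices and let $X_w$ count the copies of $(R,G)$ rooted at $w$; writing $X_w=\sum_A\mathbf 1_A$, where $A$ ranges over the $\asymp n^{\overline v_G}$ placements of the non-root vertices of $G$ on vertices outside $w$ and $\mathbf 1_A$ indicates that all $\overline e_G$ corresponding non-root edges are present, a routine count gives $\mu\coloneqq\E{X_w}\asymp n^{\overline v_G}p^{\overline e_G}$. The hypothesis $t>m(R,G)$ enters through the following \emph{abundance} property: for every rooted subgraph $H\subseteq G$ with $\overline v_H\geq 1$ we have $\overline e_H/\overline v_H\leq m(R,G)<t$, so, as $p=\Om{n^{-1/t}}$,
\[
 n^{\overline v_H}p^{\overline e_H}=\Om{n^{\,\overline v_H-\overline e_H/t}}=n^{\Om 1}.
\]
Let $\delta>0$ be the minimum of $\overline v_H-\overline e_H/t$ over these (finitely many) subgraphs $H$; this is a fixed positive constant, and in particular $\mu=n^{\Om 1}$.

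The key step is to control the even central moments of $X_w$. Expanding in the centered indicators $Y_A=\mathbf 1_A-p^{\overline e_G}$,
\[
 \E{(X_w-\mu)^{2k}}=\sum\E{Y_{A_1}\cdots Y_{A_{2k}}},
\]
a sum over all $2k$-tuples of potential copies; a term vanishes unless each $A_i$ shares a non-root edge with some $A_j$, $j\neq i$, i.e.\ unless the overlap graph on $\{A_1,\dots,A_{2k}\}$ (with $A_i\sim A_j$ whenever the two copies share a non-root edge) has minimum degree at least $1$. Group the surviving tuples by their finitely many superposition types; the contribution of one type is $\O{n^{v_J}p^{e_J}}$, where $J$ is the union of the $2k$ copies and $v_J,e_J$ count its non-root vertices and edges. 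Building $J$ up one copy at a time --- processing the components of the overlap graph one after another and, within each, always adding a copy adjacent to one already present --- each copy that is not the first in its component overlaps the current union in (a copy of) a rooted $H\subseteq G$ with $\overline v_H\geq 1$, and hence raises the deficiency $(2k\overline v_G-v_J)-(2k\overline e_G-e_J)/t$ by at least $\delta$. A $2k$-vertex graph of minimum degree $\geq 1$ has at most $k$ components, so at least $k$ of the copies are not the first in their component; thus the deficiency is at least $k\delta$, each type contributes $\O{\mu^{2k}n^{-k\delta}}$, and $\E{(X_w-\mu)^{2k}}\leq C_k\,\mu^{2k}n^{-k\delta}$ for a constant $C_k$.

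The remainder is a union bound. By Markov's inequality applied to the $2k$-th central moment, for every fixed $\eps>0$,
\[
 \Pr{\abs{X_w-\mu}>\eps\mu}\leq (\eps\mu)^{-2k}\,\E{(X_w-\mu)^{2k}}\leq C_k\,\eps^{-2k}n^{-k\delta}.
\]
Choosing the constant $k$ with $k\delta>\abs R$ makes the right-hand side $\o{n^{-\abs R}}$ (and the same bound holds with $\eps=\eps(n)$ tending to $0$ sufficiently slowly), so a union bound over the at most $n^{\abs R}$ root tuples shows that a.a.s.\ every $\abs R$-tuple spans $(1\pm\o 1)\mu$ copies of $(R,G)$, as claimed. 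Only the upper bound $X_w\leq(1+\o 1)\mu$ is used in the sequel; for it the one-sided tail alone is needed, and it could equally be obtained from a deletion argument or from standard upper-tail bounds for subgraph counts.

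The part that needs the most care --- and where the hypothesis is genuinely used --- is the moment bound: one must check that superimposing $2k$ copies of $G$ always forces a deficiency of at least $k\delta$, uniformly over all superposition types, so that the polynomial saving $n^{-\delta}$ per overlap accumulates to $n^{-k\delta}$ and dominates the constant $C_k$. This is exactly where the \emph{strict} inequality $t>m(R,G)$ is essential: had we only $t\geq m(R,G)$, some sub-extension could have deficiency $0$, an overlap there would cost a mere constant, and no moment order $2k$ would beat the $n^{\abs R}$ union bound. (Alternatively one can first reduce to the case that $(R,G)$ is strictly balanced --- a general extension being assembled from a tower of strictly balanced ones --- so that every proper overlap is strictly deficient; the hypothesis is used in the same way.)
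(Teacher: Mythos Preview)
The paper does not give its own proof of this statement: Theorem~\ref{theorem:spencer} is quoted as a known result of Spencer and used as a black box. There is therefore nothing in the paper to compare your argument against.

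That said, your proposal is a correct proof and is essentially Spencer's original argument: compute high even central moments of the extension count at a fixed root tuple, show that every non-vanishing term in the expansion carries at least $k$ ``overlaps'' each worth a polynomial saving $n^{-\delta}$ (this is where $t>m(R,G)$ is used), deduce $\E{(X_w-\mu)^{2k}}=O(\mu^{2k}n^{-k\delta})$, and finish with Markov plus a union bound over the $n^{\abs R}$ tuples. The bookkeeping you sketch---the overlap graph on the $2k$ copies, the component-by-component build-up of the union $J$, and the observation that the first copy in each component has $\overline e_H=0$ so contributes at most a constant while every other copy overlaps in some rooted $H$ with $\overline v_H\geq 1$---is exactly what is needed and is carried out correctly. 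One small point you leave implicit: to bound $|\E{Y_{A_1}\cdots Y_{A_{2k}}}|$ by $O(p^{e_J})$ one expands the product and uses that for every $S\subseteq[2k]$ the exponent $(2k-|S|)\overline e_G+e(\cup_{i\in S}A_i)$ is at least $e_J$; this is routine but worth stating.
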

If $p$ is below the density of $(R,G)$ then the following easy to show upper bound will suffice:
\begin{lemma}\label{lemma:span constant}
Suppose that $(R,G)$ is a balanced rooted graph and that $t<m(R,G)$. Then there exists a constant $D(t)$ such that for $p\leq n^{-1/t}$ with probability $1-\o{1}$ no set of $\abs R$ vertices in $G_{n,p}$ spans more than $D$ copies of $G$.
\end{lemma}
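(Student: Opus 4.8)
The plan is a first-moment argument on small ``local'' configurations. Write $d=d(R,G)$; since $(R,G)$ is balanced we have $d=m(R,G)$, and we may assume $t>0$, which together with $t<d$ forces $\overline v_G,\overline e_G\ge 1$. First I would pass to ordered root-tuples: it suffices to show that a.a.s.\ no ordered $\abs{R}$-tuple of vertices spans more than some constant $D$ copies of $G-G[R]$, since the statement for $\abs{R}$-element sets then follows with constant $\abs{R}!\,D$. Fix such a constant $D$, to be chosen below, and call a rooted graph $(R,W)$ a \emph{configuration} if it has no root edges (so that $\overline e_W=e_W$ and $v_W=\abs{R}+\overline v_W$) and it arises as the union of $D+1$ distinct copies of $G-G[R]$ glued along the common root set $R$. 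Every configuration has at most $\abs{R}+(D+1)\overline v_G$ vertices, so there are only boundedly many isomorphism types of them. If some ordered $\abs{R}$-tuple $\rho$ spans more than $D$ copies of $G-G[R]$, then $\rho$ spans a configuration; a fixed configuration $W$ embeds into $K_n$ in at most $n^{v_W}$ ways, each present in $G_{n,p}$ with probability $p^{e_W}\le n^{-e_W/t}$, so a union bound over all root-tuples and all (finitely many) configuration types gives
\[
\Pr{\text{some ordered }\abs{R}\text{-tuple spans more than }D\text{ copies}}\;\le\; C\cdot\max_W n^{v_W}n^{-e_W/t}\;=\;C\cdot\max_W n^{v_W(1-d(W)/t)},
\]
where $C$ depends only on $G$ and $D$ and the maximum is over configurations $W$. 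Hence it is enough to choose $D$ so large that $d(W)>t$ holds for every configuration $W$.

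The heart of the matter is the following density monotonicity statement: \emph{if $W$ is a union of $k\ge 1$ distinct copies of $G-G[R]$ glued along $R$, then $\overline e_W\ge d\,\overline v_W$.} I would prove this by attaching the copies one at a time. When a new copy $C\cong G-G[R]$ is attached to the current union $W'$, put $T=V(C)\cap V(W')$ (so $R\subseteq T$) and $s=\abs{T}-\abs{R}$; then $C$ contributes $\overline v_G-s\ge 0$ new vertices and at least $\overline e_G-e_{C[T]}$ new edges. Under the identification $C\cong G-G[R]$ the set $T$ corresponds to a vertex set $T'\supseteq R$ of $G$, and $e_{C[T]}$ equals the rooted edge-count $\overline e_{R,G[T']}$; since $(R,G)$ is balanced, $\overline e_{R,G[T']}/s\le m(R,G)=d$, i.e.\ $\overline e_{R,G[T']}\le d\,s$ (the case $s=0$ being trivial). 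Therefore $C$ contributes at least $\overline e_G-d\,s=d(\overline v_G-s)$ new edges, that is, at least $d$ new edges per new vertex; since a single copy satisfies $\overline e=\overline e_G=d\,\overline v_G$, the claim follows by induction on $k$. Consequently $d(W)=e_W/v_W=\overline e_W/v_W\ge d\,\overline v_W/v_W=d\,(1-\abs{R}/v_W)$ for every such union $W$.

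To finish, I would invoke the elementary observation that a vertex set of size $v$ carries at most $(v-\abs{R})^{\overline v_G}\le v^{\overline v_G}$ distinct copies of $G-G[R]$ with a prescribed root-tuple, so a union of $D+1$ distinct such copies spans at least $(D+1)^{1/\overline v_G}$ vertices. Choosing $D$ with $(D+1)^{1/\overline v_G}>\abs{R}\,d/(d-t)$ — possible since $d>t$ — forces $v_W>\abs{R}\,d/(d-t)$ for every configuration $W$, and hence $d(W)\ge d(1-\abs{R}/v_W)>t$. Feeding this back into the union bound makes every one of the finitely many terms $n^{v_W(1-d(W)/t)}$ tend to $0$, so the whole bound is $\o{1}$, which is the assertion. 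The step I expect to require the most care is the bookkeeping in the monotonicity lemma — in particular verifying that the edges of $C$ lying inside $T$ are precisely those counted by $\overline e_{R,G[T']}$ for an induced subgraph $G[T']$ of $G$ containing all of $R$, so that the balancedness hypothesis may legitimately be invoked; conceptually there is nothing deeper going on.
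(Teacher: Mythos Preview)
Your argument is correct. It is, however, a genuinely different route from the paper's. The paper does not analyse arbitrary unions of many copies; instead it first shows, from balancedness, that $t<m(R',G)$ for every $R\subseteq R'\subsetneq V(G)$, and uses this to prove (by a first-moment bound) that a.a.s.\ no $|R'|$-tuple spans more than a constant number of pairwise \emph{edge-disjoint} copies of $(R',G)$. It then finishes by a short recursion: taking a maximal edge-disjoint family rooted at $R$, any further copy must share an edge with one of them, hence is rooted at some strictly larger $R'$, and the number of such $R'$ and of copies rooted at each is bounded by the inductive hypothesis. Your approach replaces this two-step structure by a single first-moment bound on full configurations, the work being the density-monotonicity lemma (each attached copy adds at least $d$ edges per new vertex). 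What the paper's approach buys is that one never has to enumerate overlap patterns of many copies or push $v_W$ large; what your approach buys is a self-contained one-shot union bound with a clean quantitative handle on $D$ via $v_W>|R|\,d/(d-t)$. Your caveat about the bookkeeping is accurate: the identification $e_{C[T]}=\overline e_{R,G[T']}$ holds because $C\cong G-G[R]$ sends $R$ to $R$, so $C[T]$ is exactly $(G-G[R])[T']$, and balancedness applied to the induced subgraph $G[T']$ (which contains $R$) gives $\overline e_{R,G[T']}\le d\,s$ as you use it.
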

\begin{proof}
Let us first prove that the balancedness of $(R,G)$ implies that $t<m(R',G)$ for all $R'$ with $R\subseteq R'\subsetneq V(G)$. As $(R,G)$ is balanced we have for $q=m^{-1/m(R,G)}$
\[
n^{v_G - \abs{R'}} q^{e_G - e_{G[R']}} = \frac{n^{v_G - \abs{R}} q^{e_G - e_{G[R]}} }{n^{\abs{R'} - \abs{R}} q^{e_{G[R']} - e_{G[R]}}}  = \frac{\The{1}}{\Om{1}} = \O{1},
\]
and thus $m(R',F) \geq m(R,F)>t$.

Now the probability that a fixed set of roots spans $C$ pairwise edge disjoint copies of $(R',G)$ is at most 
\[
(n^{\overline v_{R',G}}p^{\overline e_{R',G}})^C=\left(n^{-\The{1}}\right)^C= \o{n^{-v_G}},
\]
provided that $C$ is large enough depending on $t$ and $v_G$. Using the union bound we conclude that a.a.s.\ for every $R'\supseteq R$ no set of $\abs {R'}$ vertices spans more than $C$ pairwise edge disjoint copies of $(R',G)$.

Fix a set of roots $R\subseteq V(G_{n,p})$ and a maximal set of edge disjoint copies of $(R,G)$ spanned by $R$. Every other copy of $(R,G)$ spanned by $R$ must intersect these copies in some set $R'\supsetneq R$. By induction $R'$ spans at most a constant number of copies of $(R', G)$ and since the number of choices for $R'$ is a constant the total number of copies of $(R,G)$ spanned by $R$ is a constant as well.
\end{proof}

The final density of interest is a generalization of the $2$-density to rooted graphs.
For $t>0$ we define
\[
 d_2\left(R,G,t\right) = \begin{cases}
\frac{\overline e_G-1}{v_G-2-t e_{G[R]}} &  \text{if } v_G-2-t e_{G[R]} > 0, \\
\infty & \text{otherwise.}
\end{cases}
\]
And
\[
	m_2\left(R,G,t\right) = 	\max_{\mathclap{\substack{H\subseteq G \\ e_H - e_{H[R]} > 1}}} d_2\left(R\cap V\left(H\right),H,t\right).
\]

The motivation for this definition is given in \autoref{section:klr statement}.

	\subsubsection{Szemer\'edi's regularity lemma for sparse graphs}

Our proof relies heavily on the sparse regularity lemma and related concepts. The required definitions and theorems are briefly stated below. A more in depth introduction to the topic can be found in \cite{Gerke:2005}.

\begin{definition}
	A bipartite graph $B=\left(U\cup W,E\right)$ is called \reg{\epsilon}{p} if for all $U'\subseteq_\eps U$ and $W'\subseteq_\eps W$,
	\begin{displaymath}
		\abs{\frac{\abs{E\left(U',W'\right)}}{\abs{U'}\abs{W'}}-\frac{\abs E}{\abs{U}\abs{W}}}\leq\epsilon p.
	\end{displaymath}
	We write \reg{\epsilon} in case $p$ equals the density $\abs E/\left(\abs{U}\abs{W}\right)$.
\end{definition}

The original regularity lemma of Szemer\'edi allows us to partition arbitrary graphs into a constant number of \reg{\epsilon}{1} pairs. Kohayakawa\cite{Kohayakawa1997216} and Rödl (unpublished) independently introduced an analogue of Szemer\'edi's regularity lemma which gives meaningful results for $p\to0$. The generalization works for a class of graph which do not contain large dense spots.
\begin{definition}
Let $G=\left(V,E\right)$ be a graph and let $0<\eta,p\leq 1$. We say that $G$ is \uf{\eta}{p} if for all disjoint sets $U,W\subseteq_\eta V$
\[\abs{E\left(U,W\right)}\leq (1+\eta)p\abs U \abs W.\]
\end{definition}

We can now state Szemer\'edi's regularity lemma for sparse graphs. We use the second version presented in \cite{Kohayakawa1997216}.
\begin{definition}
A partiton $\left(V_i\right)^k_{0}$ of the vertex set $V$ is called an \reg{\epsilon}{p} partition with exceptional class $V_0$ if $\abs{V_1}=\abs{V_2}=\dots=\abs{V_k}$, $\abs{V_0}\leq\epsilon n$, and, with the exception of at most $\epsilon k^2$ pairs, the pairs $\left(V_i, V_j\right)$, $1\leq i \leq j \leq k$ are \reg{\epsilon}{p}.
\end{definition}
\begin{theorem}[sparse regularity lemma]\label{theorem:sparse regularity lemma} For any $\epsilon>0$ and $m_0\geq 1$, there are constants $\eta=\eta\left(\epsilon,m_0\right)>0$ and $M_0 = M_0\left(\epsilon,m_0\right)\geq m_0$ such that for any $p>0$, any \uf{\eta}{p} graph with at least $m_0$ vertices admits an \reg{\epsilon}{p} partition $\left(V_i\right)^k_{i=0}$ with exceptional class $V_0$ such that $m_0\leq k \leq M_0$.\end{theorem}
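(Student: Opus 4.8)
The plan is to run the classical energy-increment proof of Szemer\'edi's regularity lemma in the $p$-normalized setting, using the \uf{\eta}{p} hypothesis exactly where boundedness of the edge density enters the dense argument. For an equitable partition $\mathcal P=(V_i)_{i=1}^{k}$ of $V\setminus V_0$ define the normalized mean-square density
\[
q_p(\mathcal P)=\sum_{1\le i<j\le k}\frac{\abs{V_i}\abs{V_j}}{n^2}\, d_p(V_i,V_j)^2,\qquad d_p(U,W)\coloneqq\frac{\abs{E(U,W)}}{p\,\abs{U}\abs{W}}.
\]
If $\eta$ is small and every part has size at least $\eta n$, then \uf{\eta}{p} gives $d_p(V_i,V_j)\le 1+\eta$ for every pair, hence $q_p(\mathcal P)\le(1+\eta)^2\le 2$ for \emph{every} such partition. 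This uniform ceiling is the only place the hypothesis is used, and it is what makes the iteration below terminate.

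The engine is the usual refinement step. If an equitable $\mathcal P$ has more than $\eps k^{2}$ pairs that are not \reg{\eps}{p}, choose for each such pair witnesses $U_{ij}\subseteq_\eps V_i$, $W_{ij}\subseteq_\eps V_j$ along which the density deviates by more than $\eps p$, and refine by cutting each $V_i$ along the Boolean algebra generated by the witness sets touching it. Dividing through by $p$ turns the $\eps p$ deviation into an $\eps$ deviation of normalized densities, so the defect form of the Cauchy--Schwarz inequality shows that each irregular pair $(i,j)$ adds at least $\eps^{4}\abs{V_i}\abs{V_j}/n^{2}$ to $q_p$ (and the gains from distinct pairs only add up under the common refinement); summing over the $>\eps k^{2}$ irregular pairs gives a gain of at least $\eps^{5}$. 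Since $q_p\ge 0$ is capped at $2$, the refinement can be applied only $\O{\eps^{-5}}$ times before at most $\eps k^{2}$ pairs fail to be \reg{\eps}{p}. Each step multiplies the part count by a bounded factor, so it stays below a tower-type function of $\eps$ alone; call this bound $M_0=M_0(\eps,m_0)$. Only now do we fix $\eta=\eta(\eps,m_0)$ small enough that $\eta<\eps/M_0$ (and small enough for the ceiling); the quantifier order is non-circular because $M_0$ came out of the increment bound without any reference to $\eta$, while $\eta<\eps/M_0$ guarantees that every part and every witness set met along the way has size at least $\eta n$, so \uf{\eta}{p} really applies.

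It remains to keep the partition equitable and $V_0$ small. Work throughout with equitable partitions: from equitable $\mathcal P$, form the Boolean refinement above, then re-partition every atom into blocks of a common size $n/N$ for a suitable bounded $N$ and sweep the sub-$n/N$ remainders, together with any atom already smaller than $n/N$, into $V_0$. Re-blocking is itself a refinement, so it only raises $q_p$; the sole decrease comes from the vertices moved to $V_0$, and keeping the per-step exceptional budget below $\eps^{6}n$ makes this decrease at most, say, half of the $\eps^{5}$ gain, so both the step count and $M_0$ are unaffected up to constants and $\abs{V_0}\le\eps n$ throughout. Starting from $V$ split into $m_0$ equal classes (remainder into $V_0$) and iterating produces the asserted \reg{\eps}{p} partition with $m_0\le k\le M_0$.

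The main obstacle is the clash between the Boolean refinement and the size threshold $\eta n$: a Boolean combination of many witness sets can be arbitrarily small, whereas \uf{\eta}{p} controls only pairs whose parts have size at least $\eta n$, so the ceiling $q_p\le2$ could fail on the refined partition. The re-blocking into equal parts of size $n/N\ge\eta n$ is precisely the remedy --- it trades the uncontrolled tiny atoms for controlled equal parts and discards only a bounded number of small pieces into $V_0$ --- but one must check carefully that the induced drop in $q_p$ and the growth of $\abs{V_0}$ stay within the budgets fixed above, and that $N$, hence $M_0$, depends on $\eps$ only.
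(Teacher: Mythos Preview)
The paper does not prove this theorem at all: it is quoted as a known result, with a citation to Kohayakawa's paper (and the survey of Gerke and Steger) for the proof. So there is no ``paper's own proof'' to compare against beyond that reference.

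Your sketch is the standard energy-increment argument in the $p$-normalized setting, and this is exactly how the cited references establish the theorem. The key points are all present: the energy $q_p$ is bounded above by a constant because \uf{\eta}{p} caps every normalized pair density by $1+\eta$ once the parts are large enough; the defect form of Cauchy--Schwarz gives an $\Omega(\eps^{5})$ increment at each irregular step; and the quantifier order is handled correctly by first reading off $M_0$ from the number of iterations and the per-step blow-up (neither of which involves $\eta$), and only afterwards choosing $\eta<\eps/M_0$ so that the upper-uniformity hypothesis applies to every part and every witness set encountered.

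The one place your write-up is still a sketch rather than a proof is the paragraph you yourself flag: controlling the drop in $q_p$ when remainders and sub-threshold atoms are swept into $V_0$. After the Boolean refinement some atoms can be tiny, and on pairs involving such atoms the normalized density $d_p$ is not controlled by the \uf{\eta}{p} hypothesis, so one cannot simply bound their contribution to $q_p$ by $(1+\eta)^{2}$ times a volume fraction. The usual remedy is to phrase the energy as a sum over all \emph{vertex pairs} (each pair weighted by the square of the normalized density of the cell containing it), so that deleting a set $S$ with $\abs{S}\le\eps^{6}n$ removes at most a $2\eps^{6}$ fraction of the total weight regardless of how large the densities on tiny atoms are; alternatively, one compares the equitable partitions before and after the step directly and never evaluates $q_p$ on the intermediate Boolean refinement. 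Either route closes the gap, and your acknowledgement that ``one must check carefully'' is accurate --- but as written the bound ``decrease at most half of the $\eps^{5}$ gain'' is asserted, not derived.
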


The \reg{\epsilon}{p}ity of a pair does not imply any lower bounds on its density. In fact the empty graph is \reg{\epsilon}{p} for all $\epsilon>0$, $0\leq p\leq1$. Still it is not hard to show that if $G$ has density at least $\alpha p$ then, for $\eta, \eps$ small enough, we find at least one pair $V_i,V_j$ which is \reg{\eps}{p} with density at least $\alpha p/2$ (and thus \reg{2\eps/\alpha}).

	\subsubsection{A K\L R type statement for rooted graphs}\label{section:klr statement}

Fix a graph $F$ and let $\left(V_i\right)_{i\in V(F)}$ denote pairwise disjoint sets of size $n$. We call a graph $G$ on the vertex set $\cup_{i\in V(F)} V_i$ \emph{\reg{F}{\eps}} if for every $\{i,j\}\in E(F)$ the pair $(V_i, V_j)$ is \reg{\eps}.  We denote with $\G{F}{n}{m}{\eps}$ the class of \reg{F}{\eps} graphs $G$ for which for every $i,j\in V(F)$
\[
\abs{E(V_i, V_j)} =
\begin{cases}
	m & \text{if }\{i,j\}\in E(F), \\
	0 & otherwise.
\end{cases}
\]

A \emph{partite copy} of $F$ in $G$ is a set of vertices $\left\{v_i \in V_i\colon i\in V(F)\right\}$ such that $\left\{v_i, v_j\right\}\in E(G)$ whenever $\left\{i,j\right\}\in E(F)$. In \cite{KohayakawaLuczakRodl97} Kohayakawa, \L uczak and R\"{o}dl conjectured that almost all graphs in $\G{F}{n}{m}{\eps}$ contain a partite copy of $F$. This conjecture, known as the K\L R-conjecture, was recently proven in full by Saxton and Thomason \cite{SaxtonThomason:container} and independently by Balogh, Morris and Samotij \cite{BaloghMorrisSamotij05}. The following counting version is due to Saxton and Thomason.

\begin{theorem}[K\L R conjecture, weak counting version \cite{SaxtonThomason:container}]\label{theorem:klr}
Let $F$ be a graph and let $\beta >0$. There exists $\mu(\beta) >0$ such that for $n$ sufficiently large and $m\geq \mu^{-1} n^{2-1/m_2(F)}$ the number of graphs in $\G{F}{n}{m}{\mu}$ which do not contain at least $\mu(m/n^2)^{e_F}n^{v_F}$ partite copies of $F$ is at most
\[\beta^m \binom{n^2}{m}^{e_F}.\]
\end{theorem}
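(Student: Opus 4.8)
The plan is to use the hypergraph container method; this is in fact how Saxton--Thomason (and, independently, Balogh--Morris--Samotij) proved the K\L R conjecture, and I would follow the same route. First I encode partite copies of $F$ as the edges of an auxiliary hypergraph: let $W=\bigcup_{\{i,j\}\in E(F)}V_i\times V_j$ be the set of $e_F n^2$ potential partite edges and let $\mathcal H$ be the $e_F$-uniform hypergraph on $W$ whose edges are the edge sets of partite copies of $F$. Then $e(\mathcal H)=\Theta(n^{v_F})$, and a partite graph $G$ with edges only in $W$ has fewer than $\mu(m/n^2)^{e_F}n^{v_F}$ copies of $F$ precisely when $E(G)$ spans fewer than that many edges of $\mathcal H$, i.e.\ $E(G)$ is ``almost independent'' in $\mathcal H$.

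Second, I check the co-degree (boundedness) hypotheses of the container theorem for $\mathcal H$. For every $J\subseteq F$ with $e_J\geq2$, the number of partite copies of $F$ through a fixed partite copy of $J$ is $\Theta(n^{v_F-v_J})$, while the average degree is $d=\Theta(n^{v_F-2})$, so the $\ell$-th co-degree obeys $\Delta_\ell(\mathcal H)/d=\Theta\!\bigl(\max_{J\subseteq F:\,e_J=\ell}n^{2-v_J}\bigr)$. The container theorem then asks for $\tau^{\ell-1}\gtrsim\Delta_\ell(\mathcal H)/d$ for every $\ell$, whose binding case is $\min_{J\subseteq F:\,e_J\geq2}(v_J-2)/(e_J-1)=1/m_2(F)$; one may therefore take $\tau=c(\beta)\,n^{-1/m_2(F)}$, and the hypothesis $m\geq\mu^{-1}n^{2-1/m_2(F)}$ guarantees $\tau\lvert W\rvert\leq e_F\mu m$. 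Iterating the container theorem a bounded number of times yields a family $\mathcal C$ of subgraphs of the complete $F$-partite graph such that every partite graph with fewer than $\mu(m/n^2)^{e_F}n^{v_F}$ copies of $F$ is a subgraph of some $C\in\mathcal C$; each $C$ itself has few copies of $F$; and $\log\lvert\mathcal C\rvert=O\bigl(\tau\lvert W\rvert\log(1/\tau)\bigr)=O(\mu m\log n)$.

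Third, and this is where the real work lies, I bound for each $C\in\mathcal C$ the number $N_C$ of graphs $G\in\G{F}{n}{m}{\mu}$ with $E(G)\subseteq C$, and show $\sum_{C\in\mathcal C}N_C\leq\beta^m\binom{n^2}{m}^{e_F}$. Since $\log\lvert\mathcal C\rvert$ is of order $\mu m\log n$, each $N_C$ has to beat $\binom{n^2}{m}^{e_F}$ by a factor $n^{-\Omega(\mu m)}$, so the crude estimate $N_C\leq\prod_{\{i,j\}\in E(F)}\binom{e_C(V_i,V_j)}{m}$ is good enough only when $C$ is sparse and is useless when $C$ is dense. Here the $\mu$-regularity hypothesis on the graphs of $\G{F}{n}{m}{\mu}$ must be used essentially: a $\mu$-regular pair with $m$ edges has at least $(1-\mu)\mu^2m$ edges inside every $\mu$-fraction sub-rectangle, so if $C$ has \emph{any} $F$-pair with a $\mu$-fraction sub-rectangle carrying fewer than that many edges then $N_C=0$; and a supersaturation argument for copies of $F$ inside the complete $F$-partite host shows that a container with few copies of $F$ and no such ``sparse window'' must be sparse enough on some $F$-pair that the crude estimate already gains the required factor $n^{-\Omega(\mu m)}$ over $\binom{n^2}{m}^{e_F}$. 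Summing over $\mathcal C$ and choosing $c(\beta)$ (equivalently $\mu$) small enough then gives the theorem.

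The hypergraph set-up and the co-degree computation are routine, and the container theorem is used as a black box, so the crux is the third step: converting ``$C$ has few copies of $F$'' into enough quantitative sparseness of $C$, which forces one to marry the container output with the sparse-regularity hypothesis and feels the threshold $n^{2-1/m_2(F)}$ a second time. Arranging the constants so that the per-container saving dominates $\lvert\mathcal C\rvert$ is delicate, and I expect this to be where essentially all of the technical effort goes.
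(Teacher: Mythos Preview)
Your overall strategy is the Saxton--Thomason route that the paper follows (for its generalisation, Theorem~\ref{theorem:lreg extension klr}): encode partite copies of $F$ as edges of an auxiliary hypergraph on $W$, verify the co-degree hypothesis with $\tau\asymp n^{-1/m_2(F)}$, and then exploit the $\mu$-regularity of $G$ against a sparse sub-rectangle of the container. The hypergraph set-up and the co-degree calculation are fine. The gap is in your union-bound accounting.

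You write $\log\lvert\mathcal C\rvert=O(\mu m\log n)$ and conclude that each $N_C$ must beat $\binom{n^2}{m}^{e_F}$ by a factor $n^{-\Omega(\mu m)}$. But the sparse-window argument you sketch cannot deliver that. A container $C$ with at most $\eps n^{v_F}$ partite copies of $F$ can still have every $F$-pair of constant density; supersaturation (or the dense regularity lemma applied to $C$, as in the paper's Lemma~\ref{lemma:extension containers}) only guarantees that some sub-rectangle of some $F$-pair has density below a \emph{constant} $\delta$. The resulting saving on $N_C$ is of the form $c(\delta)^m$, not $n^{-\Omega(\mu m)}$. Since $\mu$ is a fixed constant while $n\to\infty$, the product $\lvert\mathcal C\rvert\cdot c(\delta)^m$ is $\exp\bigl(\Theta(\mu m\log n)-\Theta(m)\bigr)\to\infty$, and the bound fails.

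The missing ingredient is the \emph{fingerprint trick}. The container theorem does not merely output containers; for each almost-independent $I$ it outputs a tuple $T=(T_1,\dots,T_s)$ with $T_i\subseteq I$, $\lvert T_i\rvert\leq c\tau\lvert W\rvert$, and $I\subseteq C(T)$. Hence one should not sum $N_C$ over $\mathcal C$, but rather sum over fingerprint tuples $T$ the probability (for $G$ chosen uniformly with $m$ edges per $F$-pair) of the event
\[
E_T\colon\quad T_1\cup\dots\cup T_s\subseteq G\subseteq C(T)\ \text{and}\ G\in\G{F}{n}{m}{\mu}.
\]
Writing $F_T$ for $\{T_1\cup\dots\cup T_s\subseteq G\}$, the sum $\sum_T\Pr{F_T}$ is simply the expected number of fingerprint tuples contained in $G$, which is at most $\binom{e_F m}{c\tau\lvert W\rvert}^s$; since $\tau\lvert W\rvert=O(\mu m)$ this is $2^{O(\mu m\log(1/\mu))}\leq 2^m$ for $\mu$ small. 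The $\log n$ has vanished, and now the per-container saving $\Pr{G\subseteq C(T),\ G\ \mu\text{-regular}\mid F_T}\leq(\beta/2)^m$ from the sparse-sub-rectangle argument suffices. This is exactly how the paper's proof of Theorem~\ref{theorem:lreg extension klr} is organised: it shows $\sum_T\Pr{F_T}\leq 2^m$ and $\Pr{G_T\mid F_T}\leq(\beta/2)^m$, and multiplies.
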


Our main tool will be a slight generalization of this theorem: we want to count only those copies of $F$ which satisfy some additional constraints.  These constraints take the form of a partite hypergraph on a subset of the vertex partitions. For a rooted graph $(R,F)$ we denote with $\mathcal R(R,n)$ the class of partite $\abs R$-uniform hypergraphs on the partitions $V_{1},\dots,V_{\abs R}$. Fix $G_R\in \mathcal R(R,n)$ and $G\in\G{F_-}{n}{m}{\eps}$. We denote with $T(G, G_R)$ the multi-hypergraph on $V_{1},\dots,V_{\abs R}$ which contains an edge $e\in E(G_R)$ with multiplicity $k$ if $G$ contains exactly $k$ partite copies of $F_-$ which contain all vertices from $e$.

For our theorem to work we require that the edges of $G_R$ are roughly distributed like partite copies of $F[R]$ in a random $\abs R$-partite graph. This notion is formalized in the following two definitions.
\begin{definition} We say that $G_R\in\mathcal R(R,n)$ is \lreg{F}{q}{\eps} if all tuples of subsets $V'_1\subseteq_{\eps} V_1,\dots,V'_{\abs R}\subseteq_{\eps} V_{\abs R}$ induce at least $q^{e_{F[R]}}\prod_{i\in R}\abs{V'_i}$ edges.
\end{definition}
\begin{definition} We say that $G_R\in\mathcal R(R,n)$ is \ue{F}{q} if for every induced subgraph $F'\subseteq F[R]$ the degree of all tuples from
$\bigtimes_{i \in V\left(F'\right))} V_i$
is at most
\[q^{e_{F[R]}-e_{F'}} n^{\abs R-v_{F'}}.\]
\end{definition}

With these definitions at hand we can state our generalization of \autoref{theorem:klr}.
\begin{restatable}{theorem}{extensionklr}\label{theorem:lreg extension klr}
Let $(R,F)$ be a rooted graph. For every $\beta>0$, $A\geq 1$ there exists $\alpha(A,\beta), \mu(\beta)>0$ such that for every $q(n)=\o{1}$ the following holds:

For $n$ large enough suppose that $m\geq \alpha^{-1} n^{2-1/m_2\left(R,F,-\log_n q\right)}$ and that $G_R\in\mathcal R(R,n)$ is \ue{F}{Aq}  as well as \lreg{F}{q}{\mu}.
Then the number of graphs $G$ in $\G{F_-}{n}{m}{\mu}$ for which $T(G, G_R)$ contains fewer than
$\alpha (m/n^2)^{e_{F_-}}q^{e_F[R]}n^{v_F}$
edges is at most 
\[\beta^m \binom{n^2}{m}^{e_F-e_{F[R]}}.\]
\end{restatable}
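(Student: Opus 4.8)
The plan is to re-run the hypergraph-container proof of the weak counting K\L R conjecture, \autoref{theorem:klr}, against a hypergraph tailored to record only those copies of $F_-$ whose roots fall in $G_R$. Let $\mathcal H$ be the $e_{F_-}$-uniform hypergraph whose vertex set is the disjoint union of the $e_{F_-}$ grids $V_i\times V_j$, one for each $\{i,j\}\in E(F_-)$, and with one hyperedge for each partite copy of $F_-$ whose root vertices span an edge of $G_R$. Then a graph in $\G{F_-}{n}{m}{\mu}$ is just an $m$-per-grid subset of $V(\mathcal H)$, and the number of edges of $T(G,G_R)$ is exactly the number of hyperedges of $\mathcal H$ spanned by $G$; so the assertion to prove is that all but a $\beta^m$-fraction of the $m$-per-grid subsets of $V(\mathcal H)$ span at least $\alpha\,(m/n^2)^{e_{F_-}}q^{e_{F[R]}}n^{v_F}$ hyperedges.

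Next I verify the hypotheses of the container theorem for $\mathcal H$. First, $e(\mathcal H)=\Theta\!\left(q^{e_{F[R]}}n^{v_F}\right)$: the lower bound is the lower-regularity of $G_R$ applied with $V'_i=V_i$, and the upper bound is its upper-extensibility applied to the empty subgraph. Second, and this is the technical core, for every $j$ the co-degree $\Delta_j(\mathcal H)$ is at most $\tau^{\,j-1}$ times the average degree $\Theta\!\left(q^{e_{F[R]}}n^{v_F-2}\right)$, where $\tau=\Theta\!\left(n^{-1/m_2(R,F,-\log_n q)}\right)$: a set of $j$ grid-edges spans some $H\subseteq F$ on $v_H$ vertices, and extends to a rooted copy of $F_-$ in at most $q^{\,e_{F[R]}-e_{F[R\cap V(H)]}}n^{\,v_F-v_H}$ ways --- the power of $q$ being exactly what the upper-extensibility of $G_R$ gives for the induced subgraph $F[R\cap V(H)]$. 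Dividing by the average degree and optimising over $H$ yields precisely the exponent $1/m_2(R,F,-\log_n q)$, which is the motivation for that density.

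Applying the container theorem, and iterating it a bounded number of times, produces a family of containers, each spanning an arbitrarily small fraction of $e(\mathcal H)$, every $\mathcal H$-independent graph lying inside one of them, and the container of a set $I$ being determined by a fingerprint $S\subseteq I$ with $|S|\le O\!\left(\tau|V(\mathcal H)|\right)$; since $m\ge\alpha^{-1}n^{2-1/m_2(R,F,-\log_n q)}$ we have $\tau|V(\mathcal H)|=\Theta(n^{2-1/m_2(R,F,-\log_n q)})\le O(\alpha m)$. Given $G$ with $T(G,G_R)$ below the threshold, deleting a minimum hitting set $D$ of the hyperedges it spans (so $|D|<\alpha\,(m/n^2)^{e_{F_-}}q^{e_{F[R]}}n^{v_F}$) leaves it independent, hence inside a container with fingerprint $S\subseteq G\setminus D$; encoding $G$ by $S$, by $D$, and by its remaining edges, and using $S\sqcup D\subseteq G$ together with $\binom{n^2}{m-k}\le\binom{n^2}{m}(2m/n^2)^{k}$ to absorb the cost of choosing $S$ and $D$, the number of such $G$ is at most
\[
\binom{n^2}{m}^{e_{F_-}}\sum_{S}\sum_{D}\left(\tfrac{2m}{n^2}\right)^{|S|+|D|}.
\]
When $(R,F)$ is balanced for $d_2(\cdot,\cdot,-\log_n q)$ one also has $(m/n^2)^{e_{F_-}}q^{e_{F[R]}}n^{v_F}=\Theta(m)$, so $|D|=O(\alpha m)$ too, and the double sum is at most $\big((C/\alpha)^{\alpha}\big)^{\Theta(m)}\binom{n^2}{m}^{e_{F_-}}$, which is $\le\beta^m\binom{n^2}{m}^{e_{F_-}}$ once $\alpha=\alpha(A,\beta)$ is small enough.

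I expect the hard part to be exactly this final count. The number of containers is polynomially large in $n$ and can only be beaten by exploiting that both the fingerprint $S$ and the correction set $D$ sit inside $G$; keeping that cancellation honest is what forces the exponent $m_2(R,F,-\log_n q)$ in the hypothesis on $m$, and bounding $|D|$ by the number of copies in $G$ needs a supersaturation estimate, again drawn from the lower-regularity of $G_R$. When $(R,F)$ is not balanced for $d_2(\cdot,\cdot,-\log_n q)$ --- so that the expected copy count exceeds $m$ and $D$ is no longer $O(\alpha m)$ --- one first passes to the densest rooted subgraph of $(R,F)$ and argues there, the extra work being to check that the induced projection of $G_R$ still satisfies the two regularity conditions. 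With that in place, the genuinely delicate technical step is the co-degree estimate of the second paragraph, which is where all three hypotheses --- upper-extensibility, lower-regularity, and the exact form of $m_2(R,F,-\log_n q)$ --- get used.
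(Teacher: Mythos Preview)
Your hypergraph setup and the co-degree computation match the paper exactly (\autoref{lemma:delta(E,gamma) bound}), as does the invocation of the container theorem. The gap is in the final count. Your displayed bound
\[
\binom{n^2}{m}^{e_{F_-}}\sum_{S}\sum_{D}\Bigl(\tfrac{2m}{n^2}\Bigr)^{|S|+|D|}
\]
is vacuous: the $S=D=\emptyset$ term already makes the double sum $\ge 1$, so the right-hand side is never below $\binom{n^2}{m}^{e_{F_-}}$. The encoding gives no saving because you place the ``remaining edges'' freely in the full grids rather than in the container, and the container theorem only guarantees $e(\mathcal H[C(S)])\le\eps\,e(\mathcal H)$, not that $|C(S)|$ is small. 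Your own estimate $\bigl((C/\alpha)^{\alpha}\bigr)^{\Theta(m)}$ tends to $1$, not to $\beta^m$, as $\alpha\to 0$. More fundamentally, your argument never uses that $G$ is $\mu$-regular; without that hypothesis the conclusion is false already for ordinary K\L R (route each grid through misaligned halves of the partitions to produce $c_F^m\binom{n^2}{m}^{e_F}$ graphs with no partite copy of $F$, defeating every $\beta<c_F$).

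The paper obtains the factor $\beta^m$ precisely from the regularity of $G$. It first converts ``$C$ spans few hyperedges'' into structural sparsity of $C$ as a graph: apply dense Szemer\'edi to $C$ and use the counting \autoref{lemma:dense extensions} (this is where the lower-regularity of $G_R$ is actually used) to conclude that in some grid $\{i,j\}\in E(F_-)$ at least $r^2/(2e_{F_-})$ of the cell-pairs $V_{i,x}\times V_{j,y}$ have density $\le\delta$ in $C$; their union $C'$ satisfies $|C'|\le\delta n^2/(2e_{F_-})$. Now a $\mu$-regular $G\subseteq C$ must place roughly $m/r^2$ edges in \emph{every} cell-pair, hence $\ge m/(4e_{F_-})$ edges inside the small set $C'$; conditioned on the fingerprint this event has probability $\le(3e\delta)^{m/(6e_{F_-})}\le(\beta/2)^m$ for $\delta=\delta(\beta)$ small, and combined with $\sum_T\Pr{T\subseteq G}\le 2^m$ this gives the theorem. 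No deletion set $D$ is needed and no balancedness of $(R,F)$ is assumed --- your passage to the densest rooted subgraph addresses a difficulty that does not arise here.
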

The proof follows the proof of the K\L R conjecture presented in \cite{SaxtonThomason:container} and is deferred to \autoref{section:klr proof}.

\section{Proof of Main Theorem}\label{section:main proof}

We will assume that $F$ is a fixed $2$-balanced graph which contains an edge $e\in E(F)$ such that $m_2(F-e)\leq \overline m^2_2(F)$. This fixes a rooted graph $(e,F)$. Based on the choice of $e$ we now define the classes $\mathcal F^1,\mathcal F^2, \dots$ of rooted graphs. $\mathcal F^1$ consists of a singular rooted graph: an edge rooted in its endpoints. For $k\geq 2$ we define 
\[
\mathcal F^k \coloneqq \left\{\bigsqcup_{i<k} (e, F)\times(e_i, F^*_i) \mid \forall i\colon (e_i, F^*_i) \in \mathcal F^{\leq i}\right\},
\]
where $\mathcal F^{\leq i}\coloneqq \bigcup_{j\leq i} \mathcal F^j$.  It is useful to observe that every $F^*\in \mathcal F^k$, $k\geq 2$ can be built by starting with a copy of $F$ and then repeatedly attaching a copy of $(e,F)$ to some edge. Since $F$ is $2$-balanced this implies that $F^*$ is $2$-balanced with the same $2$-density (see \autoref{lemma:2-balanced building}). 

If Painter employs the greedy strategy then all edges colored with her $k$-th favorite color will span a copy of the densest graph from $\mathcal F^k$. We will ultimately show that, for some $F^*\in\mathcal F^{\leq k}$, Painter will have to color a linear fraction of all edges which span a copy of $F^*$ with the $k$-color (up to a permutation of the colors).  To this end let us define the notion of a \emph{dangerous} copy of $F^*_-$.
\begin{definition}
Let $F^*\in\mathcal F^k$. We say that an $r$-coloring of $F^*_-$ is dangerous if the $k-1$ copies of $F_-$ whose roots were identified in the construction of $F^*_-$ are all monochromatic and colored with pairwise different colors. We say that $F^*_-$ is dangerous if it is colored with a dangerous coloring. We say that $F\times (e,F^*_-)$ is dangerous if all attached copies of $F^*_-$ are colored according to the same dangerous coloring.
\end{definition}

%
%

Assume that Painter has crated a dangerous copy of $(e,F^*_-)$ where  $F^*\in \mathcal F^k$. If $e$ appears as an edge in a later round then Painter will be forced to color it with one of the remaining $r-k+1$ colors or close a monochromatic copy of $F$. In particular if $F^*\in \mathcal F^r$ and Painter creates a dangerous copy of $F\times(e,F^*_-)$ then Painter cannot color the inner copy of $F$ (if it were to appear) without creating a monochromatic copy of $F$. The following Lemma states that Painter cannot avoid such dangerous copies of $F\times(e,F^*_-)$.

\begin{restatable}{lemma}{dangerous}\label{lemma:dangerous}
Fix a function $p=p(n)$ satisfying $n^{-1/\overline m_2^r(F)} \ll p \ll n^{-1/\overline m^r_2(F)}\log n$. Then there exists constants $c,C>0$ such that a.a.s.\ after $Cn^2p$ rounds there either exists a monochromatic copy of $F$ or we find a graph $(e,F^*)\in\mathcal F^r$ such that Painter has created $cn^{v_F} \left(n^{v_{F^*_- -2}}p^{e_{F^*_-}}\right)^{e_F}$ dangerous copies of $F\times\left(e,F^*_-\right)$.
\end{restatable}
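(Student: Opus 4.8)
The plan is to play the game for a constant number of ``epochs'', each consuming $\The{n^2p}$ fresh random edges (so $C$ absorbs the number of epochs), and to build up the dangerous structure one colour at a time, applying \autoref{theorem:lreg extension klr} once per epoch. We may assume Painter never closes a monochromatic copy of $F$, since otherwise the conclusion is immediate. The core is the following statement, proved by induction on $k=1,\dots ,r$: a.a.s., after the $k$-th epoch there exist a rooted graph $(e,F^*)\in\mathcal F^k$, an injection $\sigma\colon\set{k-1}\to\set r$, pairwise disjoint vertex sets of size $\Om n$ two of which are distinguished as root classes $V_1,V_2$, and a bipartite graph $D_k\subseteq V_1\times V_2$ that is $\mu$-lower-regular and upper-extensible with density between $c_kq_k$ and $Aq_k$, where $q_k:=n^{v_{F^*_-}-2}p^{e_{F^*_-}}$, such that every edge of $D_k$ is the missing edge of a $\sigma$-dangerous copy of $(e,F^*_-)$ present in Painter's colouring. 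The base case $k=1$ is immediate: $F^*_-$ is then just a pair of roots, every pair is vacuously dangerous, and $D_1$ may be taken complete bipartite on two classes of size $n/2$.

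For the inductive step, feed $D_{k-1}$ (with witnesses $(e,F^{**})\in\mathcal F^{k-1}$ and $\sigma'\colon\set{k-2}\to\set r$) into \autoref{theorem:lreg extension klr} as $G_R$, with the rooted graph $(e,F)$ and $q=q_{k-1}$; the $k$-th epoch graph restricted to the relevant $v_F$ classes a.a.s.\ lies in $\G{F_-}{n}{m}{\mu}$ with $m\asymp pn^2$ and is a.a.s.\ upper-uniform (random bipartite graphs being regular). The density hypothesis $m\ge\alpha^{-1}n^{2-1/m_2(e,F,-\log_n q_{k-1})}$ is exactly what the recursive definition of $m_2(R,F,t)$ is built for: unwinding it together with $q_{k-1}$ yields the requirement $pn^2\ge\alpha^{-1}n^{2-1/\overline m_2^k(F)}$, which holds because $p\gg n^{-1/\overline m_2^r(F)}\ge n^{-1/\overline m_2^k(F)}$. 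Since the exceptional fraction $\beta^m$ in \autoref{theorem:lreg extension klr} is $\o1$, a.a.s.\ $T(G,D_{k-1})$ has $\Om{q_{k-1}n^2}$ edges of multiplicity $\Om{n^{v_{F_-}-2}p^{e_{F_-}}}$; that is, $\Om{q_{k-1}n^2}$ pairs of $D_{k-1}$ are each the missing edge of many fresh partite copies of $F_-$.

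\textbf{The main obstacle} is the next step: upgrading these fresh copies of $F_-$ to \emph{monochromatic} ones in a colour outside $\sigma'(\set{k-2})$. Here I use that the lemma's hypothesis $m_2(F_-)\le\overline m_2^2(F)\le\overline m_2^r(F)$ forces $p\gg n^{-1/m_2(F_-)}$, so each epoch graph lies above the random-Ramsey threshold for $F_-$; combining a localised, supersaturated form of the random Ramsey theorem with \autoref{theorem:lreg extension klr} (applied, if convenient, to a suitable member of $\mathcal F^{\le k}$ rooted at $e$) shows that in \emph{any} $r$-colouring, $\Om{q_{k-1}n^2}$ pairs of $D_{k-1}$ remain missing edges of a monochromatic fresh $F_-$, and a pigeonhole over the $r$ colours fixes a single new colour $\sigma(k-1)\notin\sigma'(\set{k-2})$ valid for a $1/r$ fraction of them. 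Attaching such a monochromatic $F_-$ at each surviving pair to the $\sigma'$-dangerous $(e,F^{**}_-)$ already hosted there yields a $\sigma$-dangerous $(e,F^*_-)$ for the graph $F^*$ obtained from $F^{**}$ by attaching one more copy of $(e,F)$ at the root; by \autoref{lemma:2-balanced building} this $F^*$ is again $2$-balanced, hence $(e,F^*_-)$ is a balanced rooted graph and \autoref{theorem:spencer} applies. Counting these configurations per pair and capping by \autoref{theorem:spencer} gives $\Om{q_kn^2}$ dangerous copies, and a routine regularisation -- pass to subclasses on which the hosting set is lower-regular, discard a bounded fraction, and re-verify upper-extensibility via \autoref{theorem:spencer} (or \autoref{lemma:span constant} below the relevant density) -- produces $D_k$. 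I expect this colouring step, together with re-establishing lower-regularity and upper-extensibility at every level, to absorb essentially all of the work.

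Finally, after the $r$-th epoch $D_r$ is lower-regular and dense ($\ge c_rq_r$) with every edge a missing edge of a $\sigma$-dangerous $(e,F^*_-)$, $F^*\in\mathcal F^r$, where $\sigma$ omits exactly one colour $c^*$. In one last epoch I would produce such a dangerous family on each of the $e_F$ edge-pairs of a partite copy of $F$ -- replaying the induction with the root classes reassigned, a.a.s.\ simultaneously over the constantly many edge-pairs by a union bound, and matching the witness $(e,F^*)$ and $\sigma$ across pairs using finiteness of $\mathcal F^r$ -- and then count, via one further application of \autoref{theorem:lreg extension klr} or directly, $\Om{n^{v_F}q_r^{e_F}}$ partite copies of $F$ every edge of which is the missing edge of a $\sigma$-dangerous $(e,F^*_-)$. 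Attaching these dangerous copies along the copies of $F$ exhibits $\Om{n^{v_F}\bigl(n^{v_{F^*_-}-2}p^{e_{F^*_-}}\bigr)^{e_F}}$ dangerous copies of $F\times(e,F^*_-)$, as required. The subtle point in this last step is realising a single $(e,F^*)$ and $\sigma$ on all $e_F$ edge-pairs at once; that the lemma only asserts existence of \emph{some} $(e,F^*)\in\mathcal F^r$, and that the induction allows the root classes to be placed anywhere, is what makes this feasible.
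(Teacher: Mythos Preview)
The central gap is your claim that ``a pigeonhole over the $r$ colours fixes a single \emph{new} colour $\sigma(k-1)\notin\sigma'(\set{k-2})$''. Pigeonhole gives you a colour used on a $1/r$ fraction of the monochromatic $F_-$ copies, but nothing forces it outside $\sigma'(\set{k-2})$: Painter may, in epoch $k$, colour almost everything with colour $\sigma'(1)$, in which case your supersaturated Ramsey argument only produces monochromatic $F_-$ in colour $\sigma'(1)$. Attaching such an $F_-$ to your dangerous $(e,F^{**}_-)$ does \emph{not} yield a dangerous copy for any $F^*\in\mathcal F^k$, since the dangerous colouring requires pairwise \emph{distinct} colours on the $k-1$ copies of $F_-$. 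Nor is there any contradiction with Painter's survival: the pairs in $D_{k-1}$ are non-edges, and two copies of $F_-$ in the same colour sharing a missing root pair do not make a monochromatic $F$. So your induction simply stalls.

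The paper's mechanism is different in kind. It first proves an auxiliary statement (\autoref{lemma:force regular}): given any set $S$ of at most $r-2$ colours, Painter is forced to create a monochromatic $F^*$-spanning $G\in\G{K_t}{\tilde n}{m}{\eps}$ in some colour of $\set r\setminus S$, with $F^*\in\mathcal F^{\le|S|+1}$. The proof is a dichotomy with nesting: recursively find $G_1,\dots,G_{|S|}$, each inside a partition class of the previous one; if any $G_i$ is coloured outside $S$ stop, otherwise the $G_i$ exhaust $S$, \autoref{theorem:lreg extension klr} locates many pairs spanning a partite $F_-$ in \emph{every} $G_i$, and any such pair that later appears as an actual edge \emph{must} be coloured outside $S$ or close a monochromatic $F$. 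Those forced edges are then regularised into $G$. In short, the paper \emph{blocks} all old colours at the root pair before the root edge arrives; you try to pigeonhole into a new colour after the fact, and that does not work. Two further points your sketch glosses over but which absorb real effort in the paper: a uniform probability bound $\Pr{G\supseteq G'}\le (m/(\delta\tilde n^2))^{|E(G')|}$ is carried through the entire recursion so that the exceptional set in \autoref{theorem:lreg extension klr} can be beaten despite the colouring being adversarial; and the final count is done directly on $v_F$-uniform hypergraphs $A_i$ with $(R,F)\leftarrow(V(F),F\times(e,F))$, which sidesteps the edge-pair coordination problem you yourself flag at the end.
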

In other words at least a constant fraction of the copies of $F\times (e, F^*_-)$ are dangerous (for some $F^*\in\mathcal F^r$). Assuming the above Lemma the main result follows from a second moment argument similarly to the one presented in \cite{CPC:4654504}. For completeness we restate the proof below. We shall also require the following proposition, whose proof we defer to \autoref{section:auxiliary}.
\begin{restatable}{proposition}{FtimesFstardensity}\label{prop:FtimesFstar density}
All rooted graphs $(e,F^*)\in \mathcal F^r$ satisfy 
\[
m(F\times\left(e,F^*\right)) \leq \overline m^r_2(F).
\]
\end{restatable}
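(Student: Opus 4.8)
The plan is to compute $m(F\times(e,F^*))$ directly from the recursive structure of $\mathcal F^r$ by induction on $r$, reducing everything to the $2$-density of $F$. Recall that $F\times(e,F^*)$ is obtained by attaching a copy of $F^*$ (rooted in a pair of vertices $e_i$) to every non-root edge of a central copy of $F$. By the maximality in the definition of $m(\cdot)$, it suffices to bound $d(R\cap V(H),H)=\overline e_H/\overline v_H$ for every subgraph $H\subseteq F\times(e,F^*)$; moreover, since density of a rooted graph behaves well under gluing along roots, the worst case $H$ is one where the central copy of $F$ is fully present and each attached copy of $F^*$ is either fully present or absent. So the task reduces to understanding, for a single attached copy $(e_i,F^*_i)$ with $F^*_i\in\mathcal F^{\le i}$, the quantity $\overline e_{F^*_i}/\overline v_{F^*_i}=d(e_i,F^*_i)$, and then combining the contributions of the central $F$ and the (at most $e_F$) attached pieces.

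First I would establish, as a lemma or by citing \autoref{lemma:2-balanced building} together with the remarks in the preliminaries, that every $F^*\in\mathcal F^k$ is $2$-balanced with $m_2(F^*)=m_2(F)$, and that consequently every rooted graph $(e,F^*)$ appearing in the construction satisfies the key inequality
\[
\overline v_{F^*} - 2 + \frac{1}{\overline m_2^{k-1}(F)} \;\le\; \frac{\overline e_{F^*}}{\overline m_2^{k-1}(F)},
\]
i.e.\ $d(e,F^*)=\overline e_{F^*}/\overline v_{F^*}$ is controlled in terms of $\overline m_2^{k-1}(F)$. This is exactly the content of the recursive definition of $\overline d_2^r$: attaching a pendant copy of $(e,F^*_i)$ to an edge of $F$ adds $\overline e_{F^*_i}$ edges and $\overline v_{F^*_i}$ vertices, and the extremal balance is achieved when each attached piece sits at the threshold $\overline m_2^{i}(F)$. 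The base case $r=1$ is trivial since $\mathcal F^1$ is a single edge and $F\times(e,\text{edge})=F$, giving $m(F)=\overline m_2^1(F)$.

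Then, for the inductive step, I would take $H\subseteq F\times(e,F^*)$ extremal and split $\overline e_H=\overline e_{H\cap F} + \sum_i \overline e_{H\cap F^*_i}$ and similarly for vertices. For each attached copy I use the inductive hypothesis on $\mathcal F^{\le r-1}$ (so that $d(e_i,F^*_i)$ is bounded by a ratio involving $\overline m_2^{r-1}(F)$), plug into the mediant/ratio inequality $\frac{a+c}{b+d}\le\max(\frac ab,\frac cd)$, and track how the $e_F$ copies of $F$ in the central piece force the overall ratio down to $\overline m_2^r(F) = \max_{H\subseteq F} e_H/(v_H-2+1/\overline m_2^{r-1}(F))$. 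The $2$-balancedness of $F$ is what guarantees the maximum over subgraphs $H$ of $F$ is attained at $F$ itself and that the pendant attachments genuinely saturate the bound rather than exceed it.

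The main obstacle I anticipate is bookkeeping: carefully handling partial subgraphs $H$ in which an attached copy $F^*_i$ is only partially present (so one cannot simply use $d(e_i,F^*_i)$ but must again invoke balancedness of $F^*_i$ to reduce to the full copy or a sub-rooted-graph of it), and making sure the roots $e$ of $F\times(e,F^*)$ and the intermediate roots $e_i$ are counted consistently in $\overline v$ and $\overline e$ throughout the nested construction. Once the reduction "extremal $H$ = central $F$ plus a subset of fully-present attached copies, each of which may itself be replaced by its own extremal sub-piece" is in place, the final inequality is just the defining recursion for $\overline m_2^r$ read off termwise, combined with $\frac{a+c}{b+d}\le\max\!\left(\frac ab,\frac cd\right)$ applied repeatedly.
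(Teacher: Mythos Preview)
Your plan is heading in a workable direction but takes a much longer route than the paper, and the displayed ``key inequality'' is garbled. The paper's proof is essentially three lines once two earlier lemmas are in hand. The step you are not exploiting is this: by \autoref{lemma:2-balanced building} applied iteratively, not only is every $F^*\in\mathcal F^k$ $2$-balanced with $m_2(F^*)=m_2(F)$, but $F\times(e,F^*)$ is itself $2$-balanced, hence \emph{balanced}. Balancedness means $m(F\times(e,F^*))=d(F\times(e,F^*))$, so there is no maximization over subgraphs at all: one only has to check the density of the full graph. This completely dissolves the ``main obstacle'' you anticipate (partial subgraphs $H$, partially present attached copies, mediant bookkeeping); your reduction ``extremal $H$ = central $F$ plus some fully present pendants'' is never needed and in any case is asserted without proof in your proposal.

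Once reduced to the full graph, the paper writes the target inequality as
\[
n^{v_{F\times(e,F^*)}}\, n^{-e_{F\times(e,F^*)}/\overline m_2^r(F)}
= n^{v_F}\left(n^{v_{F^*}-2}\, n^{-e_{F^*}/\overline m_2^r(F)}\right)^{e_F} \ge 1,
\]
and applies \autoref{lemma:f star density} (which already packages, by induction, exactly the recursive content you want) with $k=r$ to bound the inner factor below by $n^{-1/\overline m_2^{1}(F)}=n^{-1/m(F)}$; then $n^{v_F-e_F/m(F)}\ge 1$ since $m(F)\ge e_F/v_F$. Your displayed inequality $\overline v_{F^*}-2+1/\overline m_2^{k-1}(F)\le \overline e_{F^*}/\overline m_2^{k-1}(F)$ does not match this: it carries a spurious $-2$ (recall $\overline v_{F^*}$ already equals $v_{F^*}-2$) and the wrong index on $\overline m_2$ (for $F^*\in\mathcal F^r$ the comparison is against $\overline m_2^r$, with the slack $1/\overline m_2^1$ absorbed by the central copy of $F$). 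So: keep your use of \autoref{lemma:2-balanced building}, push it one step further to balancedness of the whole product, and replace the inline induction by a call to \autoref{lemma:f star density}.
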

\begin{proof}[Proof of main theorem]
We pause the game after $m=\The{n^2p}$ rounds. Exploiting the asymptotic equivalence between $G_{n,m}$ and $G_{n,p}$ we consider the resulting graph to be distributed like a $G_{n,p}$.

For a graph $G$ let the random variable $X_G$ denote the number of copies of $G$ in $G_{n,p}$. Let $F^*$ denote the graph guaranteed by \autoref{lemma:dangerous} and define $\tilde F\coloneqq F\times (e,F^*)$ and $\tilde F_-\coloneqq F\times (e,F^*_-)$. By \autoref{lemma:dangerous} Painter has created $M=\Om{X_{\tilde F_-}}$ dangerous copies of $\tilde F_-$. We now consider these $M$ copies to be fixed and for $i\in \set{M}$ denote with $F_i$ the missing inner copy of $F$ of the $i$-th copy of $\tilde F_-$. Observe that $F_i$ and $F_j$ are not required to be disjoint and may in fact be identical.

Observe that if Painter is forced to color one of the $F_i$ in a future round then she must close a monochromatic copy of $F$ and thus loose the game. We now show that indeed a.a.s.\ one of the $F_i$ appears within the next $\The{n^2p}$ rounds.

Let $Z_i$ denote the event that $F_i$ appears and let $Z=\sum_{i=1}^M Z_i$. We have
\[
	\E{Z}
	= M p^{e_F}
	=\Om{\E{X_{\tilde F_-}}}p^{e_F}
	= \Om{\E{X_{\tilde F}}}
	\overset{(*)}{=} \om{1},
\]
where $(*)$ follows from \autoref{prop:FtimesFstar density}. Furthermore
\begin{align*}
\Var{Z} &= \E{Z^2}-\E{Z}^2 \\
&= \sum_{i,j} \E{Z_i Z_j} - \E{Z_i}\E{Z_j}\\
&= \sum_{\substack{G\subseteq F \\ e_G \geq 1}} \sum_{\substack{i,j\\ F_i\cap F_j \sim G}} p^{2e_F-e_G} - p^{2e_F} \\
&\leq \sum_{\substack{G\subseteq F \\ e_G \geq 1}} M_G p^{2e_F-e_G},
\end{align*}
where $M_G$ denotes the number of pairs of $F^*_-$ whose (missing) inner copies of $F$ intersect in a copy of $G$.

Fix $G\subseteq F$ and let $H$ denote a graph obtained as the union of two copies of $\tilde F_-$ whose (missing) inner copies intersect in a copy of $G$. Let $T$ denote their intersection and $T_+$ the graph obtained by adding the missing edges of the inner copy of $G$ to $T$. Observe that $T_+\subseteq \tilde F$ and thus by \autoref{prop:FtimesFstar density} $\E{X_{T_+}}=\om{1}$.

We have
\[
\E{X_H} = \The{\frac{\E{X_{\tilde F_-}}^2}{\E{X_{T}}}} = \The{\frac{\E{X_{\tilde F_-}}^2p^{e_G}}{\E{X_{T_+}}}} = \o{\E{X_{\tilde F_-}}^2p^{e_G}}.
\]
Since for every $G\subseteq F$ the number of choices for $H$ is constant we have (over the first $\The{n^2p}$ rounds) $\E{M_G}=\o{\E{X_{\tilde F_-}}^2p^{e_G}}$ and thus by first moment method $M_G=\o{\E{X_{\tilde F_-}}^2p^{e_G}}$ a.a.s.

This implies (over the second set of $\The{n^2p}$ rounds) $\Var{Z}=\o{\E{Z}^2}$ and thus $Z\geq 1$ a.a.s.
\end{proof}

\subsection{Proof of Lemma~\ref{lemma:dangerous} }
Fix a function $p=p(n)$ which satisfies $n^{-1/\overline m^r_2\left(F\right)} \ll p \ll n^{-1/\overline m^r_2\left(F\right)}\log n$.
We will divide the game into a constant number of phases. In each phase we sample a copy of the binomial random graph $G_{n,p}$ and present its edges to Painter in random order (edges already presented in a previous phase are ignored). A.a.s.\ in each phase at most $\The{n^2p}$ edges are presented. Denote with $G^k_{n,p}$ the colored graph after $k$ phases. We implicitly assume that $G^k_{n,p}$ does not contain a monochromatic copy of $F$.

As a main step in the proof we will show that for every set $S$ of at most $r-2$ colors Painter must create a graph $G\in\G{K_t}{\tilde n}{m}{\eps}$ monochromatic in some color from $\set{r}\setminus S$ after a constant number of phases. In general we cannot expect that $m=\Om{n^2p}$ (for example a greedy Painter will only produce a single color class with this density). Instead we will require that $m=\The{n^{v_{F^*}}p^{e_{F^*}}}$ for some $F^*\in\mathcal F^{\abs {S} + 1}$. To retain some control on these graphs we introduce the concept of an $F^*$-spanning subgraph.
\begin{definition}\label{def:spanning}For a rooted graph $(e,F^*)\in \mathcal F^k$ we say that a subgraph $G\subseteq G^k_{n,p}$ is $F^*$-spanning if for every edge $e\in E(G)$ there exists $F^*(e)\sim F^*$ in $G^k_{n,p}$ such that
\begin{enumerate}
\item the endpoints of $e$ are the roots of $(e,F^*(e))$,
\item all non root vertices of $F^*(e)$ lie outside of $V(G)$ and,
\item $F^*(e)$ and $F^*(e')$ are edge disjoint for all $e'\in E(G)\setminus\left\{e\right\}$.
\end{enumerate}
\end{definition}

We shall see later that an $F^*$-spanning subgraph behaves like a $G_{n,q}$ with $q=n^{v_{F^*}-2}p^{e_{F^*}}$ in the sense that we obtain  bounds on its maximum degree as well as exponential upper bounds on the number of edges between linear sized vertex sets.

We are now in a position to state the main Lemma of this subsection.
\begin{restatable}{lemma}{forceregular}\label{lemma:force regular}
Fix a set $S$ of at most $r-2$ colors and an integer $t\geq2$. Then there exist a positive integer $k$ and a constant $\delta>0$ such that for every $\eps>0$ there exists $\eta>0$ such that for $p=\om{n^{-1/\overline m^r_2(F)}}$ a.a.s.\ in $G^k_{n,p}$  we find a subgraph $G\in \G{K_t}{\tilde n}{m}{\eps}$ which is monochromatic in some color from $\set{r}\setminus S$ and $F^*$-spanning in $G^k_{n,p}$ where $F^*\in\mathcal F^{\leq \abs {S}  + 1}$, $m\geq \eta n^{v_{F^*}}p^{e_{F^*}}$ and  $\tilde n\geq \eta n$.

Furthermore for every choice of $\tilde n$, $m$ and graphs $F^*\in \mathcal F^{\leq \abs {S}  + 1}$ and $G'$ with $\abs{E(G')}=\om{n}$ the probability that the statement nominates $\tilde n$, $m$, $F^*$ and $G\supseteq G'$ is at most
\[
\left(\frac{m}{\tilde n^2 \delta}\right)^{\abs{E(G')}}.
\]
\end{restatable}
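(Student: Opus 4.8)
The plan is to prove the statement by induction on $\abs S$. The base case is $\abs S = 0$: here no colors are forbidden, so we only need to produce a monochromatic $\G{K_t}{\tilde n}{m}{\eps}$ subgraph that is $F^*$-spanning for some $F^* \in \mathcal F^{\leq 1} = \{(\text{edge})\}$, i.e.\ $F^*$-spanning simply means "a subgraph of $G^k_{n,p}$" with $q = p$. After $k$ phases we have presented $\The{n^2p}$ edges; since Painter uses $r$ colors, by pigeonhole some color class has $\Om{n^2p}$ edges, and this color class is \uf{\eta}{p} (it is a subgraph of $G_{n,p}$, whose upper-uniformity is a standard first-moment estimate valid because $p = \om{n^{-1/\overline m^r_2(F)}}$ and $\overline m^r_2(F) < m_2(F)$ suffices to bound the expected number of dense spots). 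Applying the sparse regularity lemma (\autoref{theorem:sparse regularity lemma}) to this color class and then the "dense pair" remark following it, we extract an \reg{\eps}{p} pair of density $\Om{p}$; iterating to find a $K_t$ worth of mutually regular pieces is the usual clean-up. The probability bound comes from the fact that conditioning on $G \supseteq G'$ forces those $\abs{E(G')}$ specific edges to be present in $G_{n,p}$, each with probability $p$, and $p/ (\tilde n^2/n^2) \le$ the claimed ratio once we absorb constants into $\delta$; more precisely the number of edges of $G_{n,p}$ restricted to the relevant pair of classes is concentrated around $m \asymp p\tilde n^2$, so the conditional probability of each prescribed edge is $(1+\o 1)m/\tilde n^2$, giving $(m/(\delta\tilde n^2))^{\abs{E(G')}}$.

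For the inductive step, suppose the lemma holds for all sets of size $< \abs S$; fix $S$ with $\abs S = s \le r-2$ and a target clique size $t$. Pick some color $c \in S$ and apply the induction hypothesis with the smaller set $S \setminus \{c\}$ and a clique size $t'$ chosen large enough (depending on $F$, $t$, $s$) — this gives, after $k'$ phases, a monochromatic-in-some-color $G' \in \G{K_{t'}}{\tilde n'}{m'}{\eps'}$ that is $F_0^*$-spanning for some $F_0^* \in \mathcal F^{\le s}$. If the color of $G'$ is \emph{not} in $S$ we are already done (it is in $\set r \setminus S$ and $K_{t'} \supseteq K_t$). So assume its color is $c$. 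Now run $O(1)$ further phases of fresh $G_{n,p}$ edges; the key point is that $G'$ being $F_0^*$-spanning means every edge of $G'$ is the root-edge of an edge-disjoint copy of $F_0^*$ in $G^{k'}_{n,p}$, so if a fresh edge lands on a non-root (non-)edge $e'$ of one of these copies, Painter attaching $(e,F) \times$ at $e'$ would let us build a copy of $(e,F)\times(e_i,F_0^*)$ — precisely the building block of $\mathcal F^{s+1}$. The density calculation (using \autoref{theorem:spencer}, \autoref{lemma:span constant}, and the definition of $m_2(R,G,t)$, whose whole purpose is exactly this "$F^*$-spanning behaves like $G_{n,q}$" bookkeeping) shows that a.a.s.\ the fresh edges hit a $\Om{n^2 q'}$-sized set of such $(e,F)$-copies with $q' = n^{v_{F_0^*}-2}p^{e_{F_0^*}}$; among those Painter must use some color, and a pigeonhole over the $r$ colors, combined with the assumption that color $c$ is already "used up" on $G'$ so that re-using $c$ here creates a monochromatic $F$ (hence is forbidden), forces a color in $\set r \setminus S$ on a $\Om{n^2 q'}$-fraction. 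Regularizing \emph{that} color class (again via the sparse regularity lemma applied to an $F^*$-spanning subgraph — which the earlier remark says is \uf{\eta}{q'}) and extracting a $K_t$ of mutually regular pairs, with the attached copies of $F_0^*$ glued on via the $\bigsqcup$–$\times$ construction, yields the required $G \in \G{K_t}{\tilde n}{m}{\eps}$ that is $F^*$-spanning for the corresponding $F^* \in \mathcal F^{s+1} \subseteq \mathcal F^{\le s+1}$. The probability bound propagates: conditioning on $G \supseteq G'$ forces a prescribed set of fresh edges to be present, each contributing a factor $\asymp m/\tilde n^2$, and the induction hypothesis's bound handles the part inherited from the previous phase; choosing $\delta$ small enough absorbs the finitely many constants.

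The main obstacle, and where most of the work lies, is the bookkeeping that makes "$F^*$-spanning subgraph behaves like $G_{n,q}$" precise and usable: one must show that an $F^*$-spanning subgraph is \uf{\eta}{q} with $q = n^{v_{F^*}-2}p^{e_{F^*}}$ (so that the sparse regularity lemma applies to its dense color classes) and has the maximum-degree control needed to run the pigeonhole on fresh edges without double-counting. This is exactly the reason the paper introduced the rooted $2$-density $m_2(R,G,t)$ with the $-\log_n q$ exponent, and the estimates feeding into it are Spencer's extension-counting theorem (\autoref{theorem:spencer}) above the density and \autoref{lemma:span constant} below it. A secondary but real difficulty is choosing the clique size $t'$ in the induction large enough that, after passing to the $K_t$-structure inside the $F^*$-spanning scaffold and discarding the bounded number of irregular or low-density pairs, enough of the clique survives — a standard but careful "Ramsey-type clean-up inside a dense cluster graph" argument. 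The probabilistic second-moment details and the a.a.s.\ statements are routine given these structural inputs.
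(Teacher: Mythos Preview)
Your inductive step has a genuine gap. You remove a single color $c$ from $S$, obtain by induction a monochromatic $G' \in \G{K_{t'}}{\tilde n'}{m'}{\eps'}$ in color $c$, and then argue that fresh edges whose endpoints span a copy of $F_-$ in $G'$ cannot be colored $c$. That much is correct, but it only forbids the one color $c$; Painter is still free to color those fresh edges with any of the remaining $\abs S - 1$ colors of $S$, so your conclusion that ``a color in $\set r \setminus S$'' is forced on a positive fraction is unjustified. To rule out \emph{all} of $S$ simultaneously one needs, at each root pair, a monochromatic copy of $F_-$ in \emph{every} color of $S$ --- this is exactly the $\bigsqcup_{i<k}$ structure in the definition of $\mathcal F^k$, and it is why the paper's proof applies the induction hypothesis $\abs S$ times in a nested fashion (each time excluding the colors already found), obtaining graphs $G_1,\dots,G_{\abs S}$ in the $\abs S$ distinct colors of $S$, and then works with the auxiliary graph $A_{\abs S}$ of pairs spanning a copy of $(e,F_-)$ in \emph{each} $G_i$.

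Two related problems follow from this. First, the graph $(e,F)\times(e,F_0^*)$ you build is not in general an element of any $\mathcal F^{j}$ once $F_0^* \in \mathcal F^{\leq s}$ with $s>1$: elements of $\mathcal F^{s+1}$ are disjoint unions of $s$ terms of the form $(e,F)\times(e_i,F_i^*)$ joined at the root, not a single such term, so your construction does not produce the required $F^*$. Second, the counting that actually drives the argument --- showing that $A_{\abs S}$ has the right density on all linear-sized subsets --- is not a consequence of \autoref{theorem:spencer} or \autoref{lemma:span constant}; the paper proves it as \autoref{claim:force regular claim} via the rooted K\L R-type statement \autoref{theorem:lreg extension klr}, which is precisely where $m_2(R,F,t)$ enters and which your sketch never invokes. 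The separate induction on $t$ (going from \reg{\eps} pairs to a $K_t$ of mutually regular classes) is also handled in the paper by a specific nested-application-plus-pigeonhole construction rather than a generic ``Ramsey-type clean-up''; your version would need to be made precise, but this is minor compared with the gap above.
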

It is crucial that in the probability bound we loose only a constant factor (the $\delta$) independently of the requested regularity (as opposed to the density of $G$ which depends on $\eta(\eps)$).

The next lemma states that this is the density guaranteed by \autoref{lemma:force regular} has the right order of magnitude in the sense that if we forbid $\abs {S}$ colors then the resulting graph should have high enough density for Painter to loose the game with $r-\abs{S}$ colors.

\begin{lemma}\label{lemma:f star density}Every $F^*\in\mathcal F^{k}$ satisfies
\[
	n^{v_{F^*}-2}n^{-e_{F^*}/\overline m^r_2(F)}
	\geq n^{-1/\overline m^{r-k+1}_2(F)},
\]
provided that $k\leq r$.
\end{lemma}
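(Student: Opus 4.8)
The plan is to reduce the stated inequality to a purely arithmetic claim about the sequence $x_j:=1/\overline m^j_2(F)$, $j\ge1$, and then prove that claim by induction on $k$. First I record the one fact needed about the $x_j$: since $F$ is $2$-balanced it is balanced with respect to $\overline m^j_2$ for every $j$, so $\overline m^j_2(F)=e_F/\bigl(v_F-2+1/\overline m^{j-1}_2(F)\bigr)$ for $j\ge2$, and clearing denominators this becomes the linear recursion
\[
e_F\,x_j=x_{j-1}+(v_F-2)\qquad(j\ge2).
\]
(The $x_j$ are also strictly decreasing in $j$ because $F$ is not a forest, and I use this monotonicity once.) Next I rephrase the target. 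Every rooted graph in $\bigcup_k\mathcal F^k$ contains its root edge — this is clear from the remark following the definition of $\mathcal F^k$: one builds $F^*$ from a copy of $F$ by repeatedly attaching copies of $(e,F)$ along edges, and since $e\in E(F)$ no such step deletes an edge. Hence $\overline v_{F^*}=v_{F^*}-2$ and $\overline e_{F^*}=e_{F^*}-1$ for $F^*\in\mathcal F^k$, and after taking $\log_n$ the lemma is equivalent to
\[
\psi(F^*)\ \ge\ x_r-x_{r-k+1},\qquad\text{where }\ \psi(G):=\overline v_G-\overline e_G\,x_r .
\]

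Second, I compute how $\psi$ behaves under the two operations used to build $\mathcal F^k$. For rooted graphs $A,B$ with two roots each that contain their root edges, attaching to each of the $\overline e_A$ non-root edges of $A$ a copy of $B$ yields $\overline v_{A\times B}=\overline v_A+\overline e_A\,\overline v_B$ and $\overline e_{A\times B}=\overline e_A\,e_B$, hence
\[
\psi(A\times B)=\psi(A)+\overline e_A\,\psi(B),
\]
while $\overline v$, $\overline e$ and therefore $\psi$ are additive under $\bigsqcup$ along the common root edge. Specialising $A=F$ and applying the recursion at $j=r$ gives $\psi(F)=(v_F-2)-(e_F-1)x_r=x_r-x_{r-1}$.

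Third, I prove $\psi(F^*)\ge x_r-x_{r-k+1}$ for every $F^*\in\mathcal F^k$ by induction on $k$ (here $k\le r$, so all indices $r-k+1,\dots,r$ are at least $1$). The base case $k=1$ is $\psi(K_2)=0=x_r-x_r$. For $k\ge2$ write $F^*=\bigsqcup_{i=1}^{k-1}(e,F)\times(e_i,F_i^*)$ with $F_i^*\in\mathcal F^{j_i}$, $j_i\le i$. The two formulas above give $\psi(F^*)=(k-1)\psi(F)+\overline e_F\sum_{i=1}^{k-1}\psi(F_i^*)$, and since $\overline e_F=e_F-1>0$ and, by the induction hypothesis together with $j_i\le i$ and monotonicity of $j\mapsto x_j$, $\psi(F_i^*)\ge x_r-x_{r-j_i+1}\ge x_r-x_{r-i+1}$, we get
\[
\psi(F^*)\ \ge\ (k-1)(x_r-x_{r-1})+(e_F-1)\sum_{i=1}^{k-1}\bigl(x_r-x_{r-i+1}\bigr).
\]
It remains to check that the right-hand side equals $x_r-x_{r-k+1}$, which uses only the recursion: write $(e_F-1)\sum_{i=1}^{k-1}x_{r-i+1}=\sum_{m=r-k+2}^{r}(e_Fx_m-x_m)$, substitute $e_Fx_m=x_{m-1}+(v_F-2)$, telescope, and apply $e_Fx_r-(v_F-2)=x_{r-1}$ once more to cancel the remaining $(k-1)x_{r-1}$ terms. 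Re-exponentiating proves the lemma.

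I do not expect a genuine obstacle here. The only points requiring care are getting the vertex and edge counts for $\times$ and $\bigsqcup$ exactly right — in particular checking that the edge-deletion clause in the definition of $\times$ never applies, because every $F_i^*$ contains its root edge — and observing that the linear recursion for $1/\overline m^j_2(F)$ is precisely what makes the telescoped sum collapse to $x_r-x_{r-k+1}$.
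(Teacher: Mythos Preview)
Your proof is correct and is essentially the paper's argument carried out additively: you take logarithms and work with $\psi(G)=\overline v_G-\overline e_G\,x_r$, while the paper keeps things multiplicative, writing $n^{v_{F^*}-2}p_r^{e_{F^*}}=p_r\prod_{i<k}n^{v_F-2}\bigl(n^{v_{F^*_i}-2}p_r^{e_{F^*_i}}\bigr)^{e_F-1}$, applying the induction hypothesis to each factor, and then telescoping via $n^{v_F-2}p_{r-i+1}^{e_F}\ge p_{r-i}$. Your explicit recursion $e_Fx_j=x_{j-1}+(v_F-2)$ is exactly the logarithm of the paper's step (with equality since $F$ is balanced with respect to $\overline m^j_2$), and your telescoping identity is the additive mirror of the paper's product $\prod_{i<k}p_{r-i}/p_{r-i+1}=p_{r-k+1}/p_r$; both proofs use the same monotonicity of $j\mapsto\overline m^j_2(F)$ to pass from $F^*_i\in\mathcal F^{j_i}$, $j_i\le i$, to the bound indexed by $i$.
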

\begin{proof}
The proof proceeds by induction on $k$. The singular graph in $\mathcal F^{1}$ consists of a single edge and thus the statement holds for $k=1$.

For $k\geq 2$ let $F^*_1,\dots,F^*_{k-1}$ denote the graphs used during the construction of $F^*\in\mathcal F^k$. Writing $p_i = n^{-1/\overline m^{i}_2(F)}$ we have
\begin{equation}
	n^{v_{F^*}-2}p_r^{e_{F^*}}
	= p_r \prod_{i<k} n^{v_F-2} \left(n^{v_{F^*_i}-2}p_r^{e_{F^*_i}}\right)^{e_F-1}
	\overset{(*)}{\geq} p_r \prod_{i<k} n^{v_F-2} p_{r-i+1}^{e_F-1},
\label{eq:p_i recursion}
\end{equation}
where (*) follows from the induction hypothesis.

By definition of $\overline m^i_2$ we have
\[
n^{v_F-2}p^{e_F}_{r-i+1} \geq p_{r-i}
\]
and thus (\ref{eq:p_i recursion}) is at least
\[
	p_r\prod_{i<k} p_{r-i}/p_{r-i+1}	= p_{r-k+1}.
\]
\end{proof}

We will give a detailed proof of \autoref{lemma:force regular} below. Before that we will walk through the main argument and state a number of auxiliary lemmas.

Assume that (by induction) we have found graphs $G_1,G_2,\dots,G_k$ with
\[
	G_i\in\G{F_-}{\tilde n}{\The{n^{-1/\overline m_2^{r-i+1}(F)}}}{\eps},
\]
which are monochromatic in pairwise different colors. Assume furthermore that $G_i$ is $F^i$-spanning for some $F^i\in\mathcal F^i$ and that the partitions $V_a, V_b$ corresponding to the missing edge of $F_-=F-\left\{\left\{a,b\right\}\right\}$ are the same for all $G_i$. Through repeated application of \autoref{theorem:lreg extension klr} we will be able to count the number of copies of $\bigsqcup_{i\leq k} (e,F_-)$ in $\bigcup_{i\leq k} G_i$ (where the $i$-th copy of $F_-$ is to be from $G_i$).

We expect to find roughly 
\begin{equation}\label{eq:VaVb edges}
n^2 \prod_{i\leq k} n^{v_F-2} n^{-(e_F-1)/\overline m_2^{r-i+1}(F)} = n^2 \prod_{i\leq k} \frac{n^{-1/\overline m_2^{r-i}(F)} }{n^{-1/\overline m_2^{r-i+1}(F)} } = n^{2-1/\overline m_2^{r-k}(F) + 1/\overline m_2^{r}(F)}
\end{equation}
such graphs. The $2$-density of $F$ is strictly above $\overline m^r_2(F)$. Since $F$ is $2$-balanced we have $m_2(F)=m(e,F)=m(e,F_-)$ and  \autoref{lemma:span constant} implies that every pair from $V_a\times V_b$ spans at most a constant number of copies of $(e,F_-)$. Thus the number of pairs in $V_a\times V_b$ which span a copy of $F_-$ in each of the graphs $G_i$ is of the same order of magnitude as (\ref{eq:VaVb edges}).

Out of these pairs roughly $n^{2-1/\overline m^{r-k}(F)}$ will appear as actual edges if we present Painter with another set of $n^2p$ edges. If Painter wants to avoid a monochromatic copy of $F$ then she is forced to color these edges with colors distinct from those used in $G_1,\dots,G_k$. Furthermore since all the $G_i$ were $F^i$-spanning these edges all span a copy of 
\[
\bigsqcup_{i\leq k} (e,F)\times (e,F^i)=F^*\in\mathcal F^{k+1}.
\]
We are below the $2$-density of $F^*$ (which equals that $F$) and therefore the following Lemma tells us that this edge set can be turned into an $F^*$-spanning subgraph by discarding a negligible number of edges.
\begin{lemma}\label{lemma:2-balanced intersection}
Suppose that $F$ is a $2$-balanced graph and that $F_1,F_2\sim F$ intersect in at least one, but not all edges. Then $n^{v_F}p^{e_F}\gg n^{v_{F_1\cup F_2}}p^{e_{F_1\cup F_2}}$ provided that $p=\o{n^{-1/m_2(F)}}$.
\end{lemma}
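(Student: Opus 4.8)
The plan is to reduce the statement to a single density inequality coming from $2$-balancedness. Write $H = F_1\cup F_2$ and $J = F_1\cap F_2$, so that $v_H = 2v_F - v_J$ and $e_H = 2e_F - e_J$. Then
\[
\frac{n^{v_H}p^{e_H}}{n^{v_F}p^{e_F}} \;=\; n^{\,v_F-v_J}\,p^{\,e_F-e_J},
\]
so it suffices to prove that this quantity is $\o{1}$. The hypotheses give $e_J\geq 1$, hence $v_J\geq 2$, and $e_F-e_J\geq 1$; moreover $J$ is isomorphic to a subgraph of $F$. Since two distinct copies of $F$ can share at least one but not all of their edges only when $e_F\geq 2$, we have $v_F\geq 3$, and therefore $m_2(F)=\tfrac{e_F-1}{v_F-2}>0$, which in particular forces $p=\o{1}$.

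The heart of the argument is the inequality
\[
\frac{e_F-e_J}{v_F-v_J}\;\geq\; m_2(F)\qquad\text{whenever } v_J<v_F .
\]
If $v_J=2$ then $J$ is a single edge, so $e_J=1$ and the left-hand side equals $\tfrac{e_F-1}{v_F-2}=m_2(F)$. If $v_J\geq 3$, then since $J\subseteq F$ and $F$ is $2$-balanced we have $d_2(J)=\tfrac{e_J-1}{v_J-2}\leq m_2(F)$, i.e.\ $e_J\leq 1+m_2(F)(v_J-2)$; substituting this bound and using $m_2(F)(v_F-2)=e_F-1$ gives
\begin{align*}
e_F-e_J-m_2(F)(v_F-v_J)
&\geq e_F-1-m_2(F)(v_J-2)-m_2(F)(v_F-v_J) \\
&= e_F-1-m_2(F)(v_F-2) \;=\; 0,
\end{align*}
and dividing by $v_F-v_J>0$ yields the claim.

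Given the inequality, $n^{v_F-v_J}\leq n^{(e_F-e_J)/m_2(F)}$, so $n^{v_F-v_J}p^{e_F-e_J}\leq\bigl(n^{1/m_2(F)}p\bigr)^{e_F-e_J}$; since $p=\o{n^{-1/m_2(F)}}$ and $e_F-e_J\geq 1$, this is $\o{1}$. The remaining case $v_J=v_F$ (the two copies span the same vertex set) is immediate, since then the ratio equals $p^{e_F-e_J}\leq p=\o{1}$.

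I expect the only genuine obstacle to be putting the density inequality into a form where $2$-balancedness applies cleanly, in particular handling the possibility that $J$ is a single edge or carries isolated vertices, and isolating the degenerate configuration where $F_1$ and $F_2$ occupy the same vertex set. Everything else is routine vertex/edge counting in the union $F_1\cup F_2$.
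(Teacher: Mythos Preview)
Your proof is correct and follows essentially the same approach as the paper: both compute the ratio via inclusion--exclusion as $n^{v_F-v_J}p^{e_F-e_J}$ (equivalently $X_J/X_F$ in the paper's notation, where $J=F_1\cap F_2$) and bound it using $2$-balancedness applied to the intersection $J\subseteq F$. The only cosmetic difference is that the paper packages the inequality uniformly as $X_J/(n^2p)\geq(p\,n^{1/m_2(F)})^{e_J-1}$, avoiding your case split on $v_J$, but the content is identical.
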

\begin{proof}
For a graph $H$ write $X_H=n^{v_H}p^{e_H}$.
Let $G=F_1\cap F_2$. Since $F$ is $2$-balanced and $v_F,v_G\geq 2 $ we have
\begin{align*}
\frac{X_F}{n^2p}=\left(\frac{p}{n^{-1/m_2(F)}}\right)^{e_F-1} \quad\quad\text{and} \quad\quad \frac{X_G}{n^2p}\geq \left(\frac{p}{n^{-1/m_2(F)}}\right)^{e_G-1}.
\end{align*}
For $p=\o{n^{-1/m_2(F)}}$ we obtain
\[
\frac{X_F}{X_{F_1\cup F_2}} = \frac{X_F}{\frac{X_F^2}{X_G}}  = \frac{X_G}{X_F} \geq \left(\frac{p}{n^{-1/m_2(F)}}\right)^{e_G-e_F} = \om{1},
\]
as desired.
\end{proof}

Finally we will want to apply the sparse regularity lemma to this $F^*$-spanning subgraph. For this we need it to be upper-uniform, which is confirmed in the following lemma.
\begin{lemma}\label{lemma:upper-uniform}Suppose that $p= \om{ n^{-1/\overline m_2^r(F)}}$.
Let $(e,F^*)\in\mathcal F^{\leq r-1}$. Then for every $\eta>0$ a.a.s.\ every $F^*$-spanning subgraph $G$ of $G(n,p)$ with at least $\eta n$ vertices is \uf{\eta}{n^{v_{F^*}-2}p^{e_{F^*}}}.
\end{lemma}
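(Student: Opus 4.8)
The plan is to show that failure of upper-uniformity is a rare event and dispatch it by a union bound over all candidate vertex sets and all candidate $F^*$-spanning structures. Fix $\eta>0$, write $q=n^{v_{F^*}-2}p^{e_{F^*}}$, and suppose towards contradiction that with non-negligible probability there is an $F^*$-spanning subgraph $G\subseteq G(n,p)$ on a vertex set $W$ with $\abs W\geq\eta n$, together with disjoint sets $U,W'\subseteq_\eta W$ carrying more than $(1+\eta)q\abs U\abs{W'}$ edges of $G$. The first step is to reduce this to a statement purely about $G(n,p)$: since $G$ is $F^*$-spanning, each of these $>(1+\eta)q\abs U\abs{W'}$ edges comes equipped with an edge-disjoint rooted copy $F^*(e)\sim F^*$ whose interior vertices avoid $W$. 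Hence $G(n,p)$ contains a set of $U$, $W'$, a partial matching $M$ between them of size $s>(1+\eta)q\abs U\abs{W'}$, and $s$ pairwise edge-disjoint copies of $(e,F^*)$ rooted on the edges of $M$ and otherwise vertex-disjoint from $U\cup W'$. It suffices to show this configuration a.a.s.\ does not exist.

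The second step is a first-moment estimate on the number of such configurations. Because $F^*$ is $2$-balanced (this is exactly the point of the $\mathcal F^k$ construction, cf.\ \autoref{lemma:2-balanced building}) with $2$-density equal to $m_2(F)$, and because $p=\om{n^{-1/\overline m_2^r(F)}}$ lies below $n^{-1/m_2(F)}$, the rooted graph $(e,F^*)$ is balanced in the rooted sense with $m(e,F^*)=m_2(F)>\overline m_2^r(F)$; thus $p$ is below the threshold for the rooted extension. Consequently the expected number of \emph{edge-disjoint} rooted copies of $(e,F^*)$ on a \emph{fixed} pair of roots decays polynomially, and an $s$-tuple of such copies on $s$ specified root-pairs has probability at most $(n^{v_{F^*}-2}p^{e_{F^*}})^{s}=q^{s}$ of all being present (edge-disjointness makes the events independent; vertex-overlaps only help). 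Choosing the root-pairs ($\binom{\abs U\abs{W'}}{s}$ ways, crudely $(\abs U\abs{W'})^s/s!$) and then, for each, the at most $n^{v_{F^*}-2}$ ways to place the interior vertices, the expected number of configurations with matching-size exactly $s$ is at most
\[
2^n\cdot 2^n \cdot \frac{(\abs U\abs{W'})^s}{s!}\, \bigl(n^{v_{F^*}-2}\bigr)^s p^{e_{F^*} s}
\le 2^{2n}\,\frac{(q\abs U\abs{W'})^s}{s!}
\le 2^{2n}\Bigl(\frac{e\,q\abs U\abs{W'}}{s}\Bigr)^{s},
\]
where the leading $2^{2n}$ absorbs the choices of $U$ and $W'$. (Here I am using \autoref{lemma:span constant} implicitly: on a fixed root-pair the number of rooted copies of $(e,F^*)$ is a.a.s.\ bounded by a constant $D$, so "edge-disjoint" costs only a further constant factor $D^{s}$, harmless below.)

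For $s>(1+\eta)q\abs U\abs{W'}\ge(1+\eta)q\eta^2 n^2$ the bracket $e\,q\abs U\abs{W'}/s\le e/(1+\eta)<e$, and in fact we need the exponent $s$ to beat the $2^{2n}$: since $qn^2=\om{n}$ (as $q=\om{n^{-1/\overline m_2^r(F)}\cdot n^{-2+2}}$... more precisely $q\gg n^{-2}\cdot n^{2-1/\overline m_2^r(F)}$ wait — the correct statement is that $n^{v_{F^*}-2}p^{e_{F^*}}\cdot n^2 = n^{v_{F^*}}p^{e_{F^*}}=\om{1}$ and grows polynomially, so $s=\Om{qn^2}$ is $\om{n}$, wait it is in fact $\Om{n^{1+c}}$ for some $c>0$)... the clean way: $q\abs U\abs{W'}\ge\eta^2 q n^2$ and $qn^2=n^{v_{F^*}}p^{e_{F^*}}$ which by \autoref{prop:FtimesFstar density}-type density bounds is $n^{\Om{1}}$, so $s\ge n^{1+\Om{1}}$, whence $\bigl(e/(1+\eta)\bigr)^s\cdot 2^{2n}=\o{1}$ already, and summing the geometric-like series over all integers $s$ in the valid range and over the (polynomially many) choices of the sizes $\abs U,\abs{W'}$ still gives $\o{1}$. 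By Markov's inequality no such configuration exists a.a.s., which contradicts the assumption and proves the lemma.

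The main obstacle is the second step: getting the probability bound $q^{s}$ for the simultaneous appearance of $s$ rooted copies on prescribed roots, uniformly over all $s$-element subsets of $\abs U\abs{W'}\asymp n^2$ root-pairs. The subtlety is that the copies $F^*(e)$ need only be pairwise edge-disjoint, not vertex-disjoint, so their interior vertex sets may overlap arbitrarily; one must argue that such overlaps can only decrease the probability and the number of placements (fewer free interior vertices), so the product bound $q^s$ survives — and separately that Spencer-type concentration (\autoref{theorem:spencer}) is \emph{not} available here since $p$ is below the relevant density, which is precisely why \autoref{lemma:span constant} is invoked to cap the per-root count by a constant. Once that bounded-multiplicity fact is in hand, the union bound is routine.
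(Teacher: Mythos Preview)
Your reduction is right: the key probabilistic fact is that for any set $S$ of root-pairs, the event ``$S$ is $(e,F^*)$-spanning'' has probability at most $q^{\abs S}$ with $q=n^{v_{F^*}-2}p^{e_{F^*}}$, since the witnessing copies are edge-disjoint. The paper uses exactly this fact. But your way of exploiting it---a first-moment count of size-$s$ configurations---does not close. With $N=\abs U\abs{W'}$ and $s=(1+\eta)qN$, your bound is
\[
2^{2n}\binom{N}{s}q^s \;\le\; 2^{2n}\Bigl(\tfrac{eqN}{s}\Bigr)^{s}\;=\;2^{2n}\Bigl(\tfrac{e}{1+\eta}\Bigr)^{s},
\]
and for every $\eta<e-1$ the base $e/(1+\eta)$ exceeds $1$, so the right-hand side \emph{grows} with $s$; no amount of ``$s=\omega(n)$'' rescues it. This is the standard failure of the crude union bound over $s$-subsets near the mean, and it is precisely why a Chernoff-type inequality is needed.

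The paper's fix is to invoke an extended Chernoff bound (their Theorem~\ref{theorem:extended chernoff}, due to Panconesi--Srinivasan): if Bernoulli variables $X_e$ satisfy $\Pr{\bigwedge_{e\in S}X_e}\le q^{\abs S}$ for all $S$, then $\Pr{\sum X_e\ge(1+\eta)qN}\le e^{-Nq\eta^2/3}$. Applied with $X_e$ the indicator that the pair $e$ lies in a (canonically chosen) maximal $F^*$-spanning subgraph, this gives an error $e^{-\Theta(qn^2)}$; since $qn^2=n^{v_{F^*}}p^{e_{F^*}}\gg n$ by \autoref{lemma:f star density}, this beats the $4^n$ choices of $U,W'$. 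No appeal to \autoref{lemma:span constant} is needed (the per-root multiplicity is irrelevant once you have the product bound on joint probabilities), and ``partial matching'' should just be ``set of $s$ pairs''.
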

\begin{proof}
The lemma follows from the following extension of the standard Chernoff bound:
\begin{theorem}[\cite{doi:10.1137/S0097539793250767}]\label{theorem:extended chernoff}
Let $X_1,\dots, X_n$ be a sequence of not necessarily independent Bernoulli-distributed random variables which satisfy $\Pr{\bigwedge_{i\in S} X_i}\leq q^{\abs S}$ for all subsets $S\subseteq \set n$. Then for $0<\eps\leq 1$
\[
	\Pr{\sum_{i=1}^n X_i \geq\left(1+\eps\right)qn} \leq e^{-nq \eps^2/3}.
\]
\end{theorem}

For fixed vertex sets $V_1, V_2\subseteq_{\eta^2} V$  let $G\subseteq G(G_{n,p})$ denote a (canonical) $(e,F^*)$-spanning graph in $G_{n,p}$ which maximizes the number of edges between $V_1$ and $V_2$. For $e\in E\left(K_n\left[V_1, V_2\right]\right)$ let $X_e$ denote the indicator random variable for the event $e\in G$.  We have for every set $S$
\[
\Pr{\bigwedge_{e\in S} X_e } \leq \Pr{S \text{ is $(e,F^*)$-spanning in $G_{n,p}$} } \leq \left(n^{v_{F^*}-2}p^{e_{F^*}}\right)^{\abs S}.
\]
And thus by \autoref{theorem:extended chernoff}
\[
\Pr{\abs{E_G(V_1,V_2)} \geq (1+\eta) \abs{V_1}\abs{V_2} n^{v_{F^*}-2}p^{e_{F^*}}} \leq e^{-\The{ n^{v_{F^*}}p^{e_{F^*}}}}.
\]
Since \[
n^{v_{F^*}}p^{e_{F^*}} \overset{\autoref{lemma:f star density}}{\gg} n^{2-1/\overline m_2^{1}(F)} \geq n
\]
a union bound over at most $4^{n}$ choices for $V_1$ and $V_2$ proves the Lemma.
\end{proof}

We thus obtain an $F^*$-spanning graph $G\in\G{K_2}{\tilde n}{\The{n^{v_{F^*}}p^{e_{F^*}}}}{\eps}$. Repeating the argument a constant number of times (by exposing more edges inside one of the two partitions of $G$) one can obtain a monochromatic graph $G_{k+1}\in\G{K_t}{\tilde n}{\The{n^{v_{F^*}}p^{e_{F^*}}}}{\eps}$ as required to finish the induction.

This argument can be repeated as long as $\abs S \leq r-2$. One could hope to iterate one more time and find a graph $G_r\in \G{F_-}{\tilde n}{\The{n^{2-1/\overline m^1_2(F)}}}{\eps}$. This approach is bound to fail. The density of $G_r$ is (in general) not above the $2$-density of $F_-$ so we cannot hope to find copies of $F_-$ in $G_r$. Instead we find graphs $G_1,\dots,G_{r-1}$, where $G_i\in\G{F\times(e,F_-)}{\tilde n}{\The{n^{2-1/\overline m^{r-i+1}(F)}}}{\eps}$, whose \emph{inner} partitions agree and use \autoref{theorem:lreg extension klr} to show directly that many $v_F$-tuples span copies of $F\times(e,F_-)$ in all the $G_i$.

Before formalizing the above we need two more auxiliary lemmas. The first one asserts that the density of $G_i$ is indeed large enough to apply \autoref{theorem:lreg extension klr}.
\begin{restatable}{lemma}{klrdensity}\label{lemma:klr density}
Suppose that $F$ is a $2$-balanced graph, which contains an edge $e$ such that $m_2\left(F-\left\{e\right\}\right)\leq \overline m^2_2(F)$.
Then for all $r\geq k\geq 2$
\begin{align*}
\overline m^{k}_2(F) &\geq m_2(e,F, +1/\overline m_2^{k}(F) - 1/\overline m_2^{r}(F)), \\
\overline m^k_2(F) &\geq m_2(V(F),F\times (e,F),+1/\overline m_2^{k}(F) - 1/\overline m_2^{r}(F)).
\end{align*}
\end{restatable}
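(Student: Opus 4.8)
I would reduce both statements to elementary properties of the sequence $x_j := 1/\overline m_2^j(F)$. Since $F$ is $2$-balanced it is balanced with respect to $\overline m_2^j$ for every $j$, so $\overline m_2^j(F) = \overline d_2^j(F,F)$; with the convention $x_0 := 2$ this yields the recursion $e_F x_j = (v_F-2)+x_{j-1}$ for all $j\geq 1$ (for $j=1$ it reads $x_1 = v_F/e_F = 1/m(F)$). Hence $x_j$ decreases strictly to $x_\infty = (v_F-2)/(e_F-1) = 1/m_2(F)$, and the successive differences satisfy $x_{j-1}-x_j = (2e_F-v_F)/e_F^{\,j}$. The one preliminary fact I need is that $t := 1/\overline m_2^k(F)-1/\overline m_2^r(F) = x_k-x_r$ lies in $[0,1/m_2(F))$: the upper bound follows from $x_k < 2x_\infty$, i.e.\ $\overline m_2^k(F) > m_2(F)/2$, which in turn holds because $\overline m_2^k(F) > \overline m_2^1(F) = m(F)$ for $k\geq 2$ together with the general inequality $m_2(F)\leq 2m(F)$ (immediate from $\tfrac{e_H-1}{v_H-2}\leq \tfrac{2e_H}{v_H}$ for all $H$ with $v_H\geq 3$).

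For the first inequality I would split the subgraphs $H$ (those with $\overline e_H\geq 2$, so in particular $v_H\geq 3$) over which $m_2(e,F,t)$ is taken. If $e\not\subseteq V(H)$ or $e\notin E(H)$, then $H\subseteq F-e = F_-$ and $e_{H[e\cap V(H)]}=0$, so $d_2(e\cap V(H),H,t) = \tfrac{e_H-1}{v_H-2} = d_2(H)\leq m_2(F_-)\leq \overline m_2^2(F)\leq \overline m_2^k(F)$, using the hypothesis on $F_-$. Otherwise $e\in E(H)$ and $d_2(e,H,t) = \tfrac{e_H-2}{v_H-2-t}$; $2$-balancedness of $F$ gives $e_H-1\leq (v_H-2)m_2(F)$, and since $t<1/m_2(F)$ the bound $\tfrac{(v_H-2)m_2(F)-1}{v_H-2-t}$ is increasing in $v_H$, whence $d_2(e,H,t)\leq d_2(e,F,t) = \tfrac{e_F-2}{v_F-2-t}$. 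Substituting $v_F-2 = e_F x_k - x_{k-1}$ and $t = x_k-x_r$ into the target $d_2(e,F,t)\leq \overline m_2^k(F)$ reduces it to $x_r\geq x_{k-1}-x_k$; since $x_r\geq x_\infty$ it suffices to check $\tfrac{v_F-2}{e_F-1}\geq \tfrac{2e_F-v_F}{e_F^{\,k}}$, which is trivial when $2e_F\leq v_F$ and otherwise weakest at $k=2$, where it becomes $e_F^2(v_F-2)\geq (e_F-1)(2e_F-v_F)$ — immediate for $v_F\geq 4$ and checked directly for $v_F=3$ (which forces $e_F=3$). (The case $e_F\leq 2$, where the defining maximum is empty, is trivial.)

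For the second inequality I would run the same scheme on $(V(F),F\times(e,F))$, whose non-root part consists of $e_F$ edge-disjoint pendant copies of $F_-$, one glued along each edge of a central copy of $F$. First I would record that $F\times(e,F)$ — being $F$ with a copy of $(e,F)$ glued onto every edge — is $2$-balanced with $2$-density $m_2(F)$, which follows by the reasoning behind \autoref{lemma:2-balanced building}. For a subgraph $H$ I would track $R' := V(F)\cap V(H)$, the central edges present in $H$, and, edge by edge, which portion of the attached pendant lies in $H$: inserting a missing central edge strictly shrinks the denominator (since $t>0$); enlarging the ``central/dense'' part only increases the density, by $2$-balancedness of $F\times(e,F)$ together with $t<1/m_2(F)$; and any partially filled pendant is subcritical because $m_2(F_-)\leq \overline m_2^k(F)$. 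These together force the maximum to be attained at $H = F\times(e,F)$, where $d_2(V(F),F\times(e,F),t) = \tfrac{e_F(e_F-1)-1}{(e_F+1)(v_F-2)-t e_F}$. As before, the recursion turns the target bound into $x_r\geq \tfrac{e_F+1}{e_F}(x_{k-1}-x_k)$, hence (via $x_r\geq x_\infty$) into the elementary inequality $e_F^{\,k+1}(v_F-2)\geq (e_F+1)(e_F-1)(2e_F-v_F)$, verified exactly like the first one.

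The routine ingredients are the recursion bookkeeping and the two final polynomial inequalities. The genuinely delicate step is the subgraph analysis for $F\times(e,F)$: a subgraph may interleave the central copy of $F$ with partially-filled pendants in many ways, and one must rule out that any such configuration is denser — in the sense of $d_2(\cdot,\cdot,t)$ — than the whole graph. This is where one leans hardest on the $2$-balancedness of both $F$ and $F\times(e,F)$ and on $t<1/m_2(F)$; packaging this monotonicity as one structural lemma (ideally one that also yields \autoref{lemma:2-balanced building}) is, I expect, where the real work lies.
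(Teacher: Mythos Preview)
Your treatment of the first inequality is correct and close in spirit to the paper's: both of you split on whether the edge $e$ lies in $H$, dispose of the case $e\notin E(H)$ via $m_2(F_-)\le\overline m_2^2(F)$, and for the remaining case invoke $m_2(F)\le 2m(F)$ together with the $\overline m_2^{\,j}$ recursion. Your monotonicity reduction to $H=F$ (via $e_H-1\le (v_H-2)m_2(F)$ and the increase of $x\mapsto (xm_2(F)-1)/(x-t)$) is a clean repackaging of what the paper does with the expression $n^{v_{F'}-2}p^{e_{F'}}\ge n^{-1/\overline m_2^{k-1}(F)}$; the final polynomial inequality you extract is equivalent to their bound.

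For the second inequality your route diverges from the paper's, and this is where the proposal has a genuine gap. You aim to show that the maximum of $d_2(V(F)\cap V(H),H,t)$ over all admissible $H\subseteq F\times(e,F)$ is attained at the full graph, and then verify a single polynomial inequality. The three moves you list are not sufficient as stated. Inserting a missing central edge is fine, and completing a partial pendant whose two central endpoints are already in $H$ does work (this uses $t<1/m_2(F)$ and $2$-balancedness, as you say). But ``enlarging the central part'' is \emph{not} obviously monotone: adding a central vertex together with its $d$ incident central edges changes the denominator by $1-td$, which is positive for small $t$ and hence \emph{decreases} $d_2$; you then need the newly available pendants to more than compensate, and whether they do depends on the current value of $d_2(R',H,t)$ --- so the argument as written is circular. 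Your sketch also does not handle subgraphs that meet a pendant through only one (or zero) of its two central endpoints.

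The paper avoids this monotonicity question entirely. It decomposes an arbitrary $H$ pendant by pendant: it writes $n^{v_H-2}p^{\overline e_H-1}q^{e_{H[R]}}$ as a product over the pendant pieces $F_e$ and the central part, applies the already-proved first inequality to one pendant that carries a non-root edge (this is what absorbs the ``$-1$'' in the numerator), bounds pendants that touch at most one central vertex via $m_1(F_-)\le\overline m_2^k(F)$, bounds pendants touching both central vertices by $n^{-1/m_2(F)}$ using the $\overline m_2$ definition, and is left with $\min_{F'\subseteq F,\,v_{F'}\ge 2} n^{v_{F'}-2}n^{-(e_{F'}-1)/m_2(F)}\ge 1$, which is just the $2$-balancedness of the central $F$. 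This gives the bound for every $H$ directly, without ever needing to know where the maximum sits. If you want to keep your endpoint computation, the cleanest fix is to replace the monotonicity claim by this decomposition; the structural lemma you are hoping for is, in effect, exactly this pendant-wise bookkeeping.
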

Secondly \autoref{theorem:lreg extension klr} requires the (hyper)-graph to be upper-extensible. For us this hypergraph will consist of all pairs (all $v_F$-tuples) which already span a copy of $F_-$ (of $F\times (e,F_-)$) in all graphs $G_1,\dots,G_i$. By \autoref{theorem:spencer} it suffices to show that we are above the rooted density of the corresponding graphs:
\begin{restatable}{lemma}{rooteddensity}\label{lemma:rooted density}
 Let $F^*\in\mathcal F^k$ where $k\geq 2$. Then
\[
m_1(F^*_-) < \overline m^k_2(F)
\]
and for every $V_0\subsetneq V\left(F\right)$ 
\[
m\left(V_0, \left(V_0,F\right)\times\left(e,F^*_-\right)\right) < \overline m^{k}_2(F).
\]
\end{restatable}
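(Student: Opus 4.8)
The plan is to prove both inequalities \emph{directly}, reducing each to \autoref{lemma:f star density}. Write $\theta_j=\overline m_2^j(F)$ and $\theta_\infty=m_2(F)$, so $1=m(F)=\theta_1<\theta_2<\dots<\theta_\infty$. Recall (\autoref{lemma:2-balanced building}) that every graph obtained from $F$ by repeatedly attaching copies of $(e,F)$ — in particular every member of $\mathcal F^{\le k}$ and every $Q_i:=(e,F)\times(e_i,F^*_i)$ — is $2$-balanced with $m_2=\theta_\infty$; consequently $d_2(H)\le\theta_\infty$ for every $H\subseteq F^*$, and $\tfrac{e_H}{v_H-1}\le d_1(F^*)<\theta_\infty$ for every $H\subseteq F^*_-$ (the last because $F^*$ is strictly $1$-balanced and $\theta_\infty>1$). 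The one external estimate I will use is obtained by applying \autoref{lemma:f star density} with $r=k$ to an arbitrary $F^\dagger\in\mathcal F^k$: this gives $v_{F^\dagger}-2-e_{F^\dagger}/\theta_k\ge -1/\theta_1=-v_F/e_F$, and since $e_{F^\dagger}-1=\theta_\infty(v_{F^\dagger}-2)$ it rearranges to $(\theta_\infty-\theta_k)(v_{F^\dagger}-2)\le \theta_k v_F/e_F-1\le\theta_k-1$, using $e_F\ge v_F$ ($F$ is not a forest).

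For the first inequality, note first that $m_1$ is attained on a connected subgraph, so let $H\subseteq F^*_-$ be connected. View $F^*_-$ as the union of its $k-1$ ``root copies'' of $F_-$ (sharing the root pair $\{a,b\}$) and a tree\nobreakdash-like family of copies of $F$, each glued to an earlier copy along one edge. If $H$ uses some copy $C$ only partially and meets its interior, then — because $d_2\le\theta_\infty$ on subgraphs of $F$ — the missing part of $C$ has density $\ge\theta_\infty$ relative to the vertices it shares with $H$; since $\tfrac{e_H}{v_H-1}<\theta_\infty$, completing $C$ does not decrease $\tfrac{e_H}{v_H-1}$. Iterating (among densest $H$, maximizing the number of vertices) one may assume $H$ is a \emph{full sub\nobreakdash-assembly}, i.e.\ $H=(G^*)_-$ with $G^*=\bigsqcup_{i\in I}Q_i$ for some $I\subseteq\{1,\dots,k-1\}$; filling the remaining slots of $\mathcal F^k$ with the trivial piece (so that $(e,F)\times(e_i,\cdot)=F$) yields $G^\dagger\in\mathcal F^k$ with $v_{G^\dagger}\ge v_{G^*}$. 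Now $m(e,Q_i\setminus e)=m_2(Q_i)=\theta_\infty$ gives $e_{G^*}-1=\sum_{i\in I}(e_{Q_i}-1)\le\theta_\infty(v_{G^*}-2)$, whence $d_1((G^*)_-)=\tfrac{e_{G^*}-1}{v_{G^*}-1}<\theta_k$ as soon as $(\theta_\infty-\theta_k)(v_{G^*}-2)<\theta_k$, which is exactly the displayed estimate applied to $G^\dagger$.

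For the second inequality, decompose a subgraph $\Gamma\subseteq(V_0,F)\times(e,F^*_-)$ into its skeleton $W=V(\Gamma)\cap V(F)$ and, for each non\nobreakdash-root edge $f$ of $F$, the trace $\Gamma_f$ on the copy of $F^*_-$ glued at $f$. Then $\overline e_\Gamma=\sum_f e_{\Gamma_f}$ and $\overline v_\Gamma=|W\setminus V_0|+\sum_f r(\Gamma_f)$, where $r(\Gamma_f)$ is the number of vertices of $\Gamma_f$ other than the two endpoints of $f$. Bound $e_{\Gamma_f}\le\theta_\infty\,r(\Gamma_f)$ when $\Gamma_f$ contains both endpoints of $f$ (using $m(e,F^*_-)=m_2(F^*)=\theta_\infty$), and $e_{\Gamma_f}\le m_1(F^*_-)\,r(\Gamma_f)<\theta_k\,r(\Gamma_f)$ otherwise, since then $\Gamma_f$ is a subgraph of $F^*_-$ rooted at at most one vertex and the first part applies. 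Summing, $\overline e_\Gamma\le\theta_k\overline v_\Gamma-\theta_k|W\setminus V_0|+(\theta_\infty-\theta_k)\sum_{f\text{ full}}r(\Gamma_f)$, so it suffices to control the last sum: each full $f$ is non\nobreakdash-root and hence has an endpoint in $W\setminus V_0$, a vertex of $F$ lies on at most $v_F-1$ edges, and $r(\Gamma_f)\le v_{F^*}-2\le\max_{F^\dagger\in\mathcal F^k}v_{F^\dagger}-2$. Thus it is enough that $(\theta_\infty-\theta_k)(v_F-1)\bigl(\max_{F^\dagger\in\mathcal F^k}v_{F^\dagger}-2\bigr)<\theta_k$; by the displayed estimate the left side is at most $(v_F-1)(\theta_k v_F/e_F-1)$, and $(v_F-1)(\theta_k v_F/e_F-1)<\theta_k$ is equivalent to $\theta_k<\tfrac{e_F(v_F-1)}{v_F(v_F-1)-e_F}$, which holds because $\theta_k<\theta_\infty=\tfrac{e_F-1}{v_F-2}$ and $\tfrac{e_F-1}{v_F-2}\le\tfrac{e_F(v_F-1)}{v_F(v_F-1)-e_F}$ — the latter being equivalent to $(e_F-v_F)(e_F-v_F+1)\ge0$, true since $F$ is not a forest. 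The cases $|W\setminus V_0|=0$, and likewise $k=2$ (where $F^*_-=F_-$), are handled directly via $m_1(F_-)<d_1(F)\le\theta_2$.

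The step I expect to require the most care is the ``full sub\nobreakdash-assembly'' reduction in the first part: one must check that completing a partially used copy never decreases $\tfrac{e}{v-1}$ even when that copy — notably one of the $k-1$ root copies of $F_-$ — meets the rest of $H$ in more than two vertices (or when $e$ is a bridge of $F$), and that the completed graph is genuinely of the form $(G^*)_-$ with $G^*$ a sub\nobreakdash-assembly of a member of $\mathcal F^k$. A minor point is strictness when $e_F=v_F$, where $1/m(F)=1$; but then $F$ is a cycle, hence strictly $2$-balanced, so $d_2(H)=\theta_\infty$ forces $H$ to have bounded size and the estimates stay strict. Once this structural reduction is in place, everything else is immediate from \autoref{lemma:f star density} and the fact that $F$ is not a forest.
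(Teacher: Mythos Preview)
Your first inequality takes a long detour where a two-line argument suffices. Since $F^*\in\mathcal F^k$ with $k\ge2$, \autoref{lemma:2-balanced building} gives that $F^*$ is $2$-balanced (with $m_2(F^*)=m_2(F)>1$), and the preliminaries record that any $2$-balanced non-forest is \emph{strictly} $1$-balanced. Hence $m_1(F^*_-)<m_1(F^*)=d_1(F^*)$. For the last step, apply \autoref{lemma:f star density} with $r=k$: it gives $v_{F^*}-2-e_{F^*}/\overline m_2^k(F)\ge -1/\overline m_2^1(F)=-1/m(F)$, and since $m(F)\ge1$ this yields $d_1(F^*)=e_{F^*}/(v_{F^*}-1)\le\overline m_2^k(F)$. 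That is the paper's proof. Your ``full sub-assembly'' reduction is not only unnecessary but genuinely incomplete as stated: after completing individual $F$-copies, a maximiser $H$ need not be of the form $(G^*)_-$ with $G^*=\bigsqcup_{i\in I}Q_i$ --- for instance $H$ may avoid the root pair entirely, or may contain only part of some $Q_i$ (the root copy of $F_-$ together with some but not all of the attached pieces). The argument can be salvaged by observing that any such $H$ is itself $2$-balanced with the same $2$-density and then bounding $v_H\le v_{F^*}$, but by that point you have essentially rediscovered the strict $1$-balancedness argument.

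For the second inequality your direct decomposition is a legitimate alternative to the paper's route. The paper instead isolates two reusable lemmas: one (\autoref{lemma:prod density}) gives a clean criterion for $m\bigl(R,(R,G)\times(e,H-e)\bigr)\le t$ in terms of $m_1(H-e)\le t$, balancedness of $(e,H)$, and the single numerical condition $\overline v_H-\overline e_H/t\ge -1/m(R,G)$; the other (\autoref{lemma:forbidden density}) bounds $\max_{V_0\subsetneq V(F)}m(V_0,F)\le v_F-1$ and compares this with $(1/m(F)-1/\overline m_2^r(F))^{-1}$. Plugging $(R,G)\leftarrow(V_0,F)$, $(e,H)\leftarrow(e,F^*)$, $t\leftarrow\overline m_2^k(F)-\eps$ into \autoref{lemma:prod density} and verifying the numerical condition via \autoref{lemma:f star density} and \autoref{lemma:forbidden density} finishes the proof. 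Your approach replaces these structural lemmas by an explicit edge/vertex count and an ad hoc inequality in $v_F,e_F,\theta_k$; it works, but the paper's packaging is cleaner and the two lemmas are of independent interest. One small slip: you wrote $1=m(F)=\theta_1$, which holds only for cycles; all you actually use (and all that is true) is $\theta_1=m(F)=e_F/v_F\ge1$.
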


We can now state the proof of \autoref{lemma:force regular}.
\forceregular*
\begin{proof}
The proof follows by induction on $\abs S$ and $t$. For $t=2$ and $S=\emptyset$ we apply the sparse regularity lemma (\autoref{theorem:sparse regularity lemma}) to the majority color class (in that case $F^*=(e,e)$ is just an edge -  the unique graph in $\mathcal F^1$).
\begin{description}
\item[$t$ step:] Fix $t>2$ and a set $S$. We will apply the induction hypothesis (for $t\leftarrow2$ and $S\leftarrow S$) $K=r t \abs*{\mathcal F^{\leq \abs S+1}}$ times. Let $k',\delta'$ denote the absolute constants guaranteed for $t\leftarrow 2$. Denote with $\epsilon_i$ the value which we will use for $\epsilon$ in the $i$-th application of the induction hypothesis and let $\eta_i(\epsilon_i)$ denote the guaranteed constant. Our choice for $\epsilon_i$ will depend only on the constants $\eps_j$, $\eta_j$ where $j>i$ and on the requested $\epsilon$.

Apply the induction hypothesis once to $G^{k'}_{n,p}$ for $t\leftarrow2$ and obtain an \reg{\eps_1} graph $G_1\in \G{K_2}{\tilde n_1}{m_1}{\eps_1}$. Let $V_1\subset V(G)$ denote one of its vertex partitions. We then ask painter to color another $G^{k'}_{n,p}$ but we look only at the subgraph induced by $V_1$ (which is distributed like a $G^{k'}_{\abs {V_1},p}$). Since $\abs{V_1}\geq \eta_1 n$ we have $p=\om{\abs{V_1}^{-1/\overline m^r_2(F)}}$ and thus we can apply the induction hypothesis a second time to obtain an \reg{\epsilon_2} graph $G_2$ whose edges are fully contained in $V_1$. We repeat this procedure $K$ times and obtain a sequence of sets $V_1\supset V_2\supset \dots \supset V_K$ and nested graphs $G_{1},\dots,G_{K}$, where $G_i\in \G{K_2}{\tilde n_i}{m_i}{\eps_i}$. Every such $G_i$ nominates a color and a graph $F^*_i\in \mathcal F^{\leq \abs S+1}$ out of at most $r\abs*{F_{\leq \abs S + 1}}$ choices. By the pigeonhole principle we may thus fix a subset $T\subseteq \set{K}$ of size $t$ such that all graphs $G_{i}$ with $i\in T$ nominate the same color and the same graph $F^*\in\mathcal F^{\leq \abs S + 1}$.

Let $\tilde n\coloneqq \tilde n_K$ denote the size of the vertex partitions of $G_K$. We arbitrarily pick sets $\overline{V_{i}}$ of size $\tilde n$ such that
\begin{align*}
\overline{V_1}\subseteq& V(G_1)\setminus V_1, \\ 
\vdots& \\
\overline{V_{K}}\subseteq& V(G_K)\setminus V_{K}.
\end{align*}
Finally set $\overline{V_{K+1}}=V_K$. These sets are pairwise disjoint and for every pair $i<j$ we have $\overline {V_j}\subseteq V_{j-1}\subseteq V_i$. Therefore the sets $\overline{V_i}, \overline {V_j}$ are subsets of the two partitions of $G_i$ and for $\eps_i$ small enough, depending on $\eps$, $\eta_{i+1},\dots,\eta_{K}$, the induced bipartite graph $G_i\left[\overline V_i, \overline V_j\right]$ is \reg{\eps/2} with at least half the density. 
Let \[
m=\min_{\substack{i,j\in  T \\ i<j}} \abs{E\left(G_i\left[\overline V_i, \overline V_j\right]\right)}
\]
and pick for every $i,j\in T$, $i<j$ a subgraph $G_{i,j}\subset G_i\left[\overline V_i, \overline V_j\right]$ with exactly $m$ edges u.a.r.\ among all subgraphs with $m$ edges.
By \autoref{lemma:f star density} we have
\[ 
m= \Om{n^{v_{F^*}}p^{e_{F^*}}} \gg n^{2-1/\overline m_2^{r-\abs S+1}(F)} \geq n
\]
and thus these graphs $G_{i,j}$ will be \reg{\eps} with high probability. Since
\[
\tilde n \geq n \prod_{i\in\set{K}}\eta_i \quad\text{ and } \quad
 m \geq \eta_k \tilde n^{v_F^*}p^{e_F^*}
\] we may set
\[
	G\coloneqq \bigcup_{\substack{i,j\in T \\ i < j}} G_{i,j}\in \G{K_t}{\tilde n}{m}{\eps}.
\]

Furthermore we claim that the graphs $F^*(e)$ guaranteed by the invocations of the induction hypothesis are pairwise edge disjoint. This is because for $e\in E(G_i)$ the graph $F^*(e)$ has no edges inside $V_i$, but for $j>i$ the graphs $F^*(e')$, $e'\in E(G_j)$ lie completely inside $V_i$. Thus $G$ is $F^*$-spanning.

Finally we have to calculate the probability that $G'\subseteq G$ for some graph $G'$ with $\om{n}$ edges. To do so fix $\tilde n$, $m$ and the sets $\overline V_i$ among $2^{\The{n}}$ possibilities. We may assume that all edges of $G'$ go between the sets $\overline V_i$. Since $G$ is the (disjoint) union of random subgraphs of $G_i[\overline V_i, \overline V_j]$ and $\abs{E(G_i[\overline V_i, \overline V_j])} \geq \tilde n^2 (m_i/2\tilde n^2_i)$whose density is at least half as large as the density of $G_i$ the probability that $G'\subseteq G$ is then at most
\begin{align*}
& \prod_{i<j\in T} \Pr{G'[\overline V_i, \overline V_j] \subseteq G_i[\overline V_i, \overline V_j]} \left(\frac{m}{\abs{E(G_i[\overline V_i, \overline V_j])}}\right)^{\abs{E(G'[\overline V_i, \overline V_j] )}} \\
&\leq \prod_{i<j\in T}\left( \frac{m_i}{\tilde n_i^2 \delta'}    \frac{m}{\abs{E(G_i[\overline V_i, \overline V_j])}}\right)^{\abs{E(G'[\overline V_i, \overline V_j] )}} \\ 
&\leq \prod_{i<j\in T}\left( \frac{2m}{\delta' \tilde n^2}\right)^{\abs{E(G'[\overline V_i, \overline V_j] )}} = \left( \frac{2m}{\delta' \tilde n^2}\right)^{\abs{E(G')}}.
\end{align*}

Allowing some room for a union bound over the choices for $\tilde n$, $m$ and the sets $\overline V_i$ we may fix $\delta=\delta'/3$, $\eta =\eta_K\prod_{i\in\set{K}}\eta_i^{v_{F^*}} $ and $k=Kk'$.

\item[$S$ step:] Fix a nonempty set $S$ of at most $r-2$ colors and assume that the statement holds for all sets containing fewer than $\abs S$ colors. Our goal is to show that then the statement holds for $S$ and $t=2$.

Similarly to what we did in the induction step for $t$ we apply the induction hypothesis $\abs{S}$ times in a nested fashion. As before let $k'$, $\delta'$ denote absolute constants for which the induction hypothesis holds for $t\leftarrow v_F-1$ and all subsets of $S$. Denote with $\eps_i$ the value which we will use for $\eps$ in the $i$-th application of the induction hypothesis and let $\eta_i(\eps_i)$ denote the guaranteed constant. Again $\eps_i$ will depend only on $\eps_j$, $\eta_j$, where $j>i$. Crucially $\eps_i$ will not depend on the requested $\eps$ and $\eps_{\abs S}$ will be an absolute constant. Finally for the $i$-th invocation we will pick $S$ as the set of colors of $G_{1},\dots,G_{i-1}$ (thus $S\leftarrow\emptyset$ for $i=1$).

As before we obtain monochromatic graphs $G_1,\dots,G_{\abs S}$, such that $G_i\in \G{K_{v_F-1}}{\tilde n_i}{m_i}{\eps_i}$ and $V(G_i)\subseteq V_{i-1}$ for $V_0=V$ and where $V_i$ is an arbitrary partition of $G_i$.

Assume that one of the $G_i$ is monochromatic in a color from $\set{r}\setminus S$. The density of $G_i$ is in $\The{n^{v_{F^*_i}-2}p^{e_{F^*_i}}}$, where the constant does not depend on $\eps$ (since $\eps_1,\dots,\eps_{\abs S}$ do not depend on $\eps$). Furthermore by \autoref{lemma:upper-uniform} $G_i$ is \uf{\o{1}}{n^{v_{F^*_i}-2}\left(\abs Sk'p\right)^{e_{F^*_i}}}. Thus we can apply the sparse regularity lemma (\autoref{theorem:sparse regularity lemma}) to $G_i$ and obtain a graph from $\G{K_2}{\tilde n}{m}{\eps}$ whose density is of the same order as the density of $G_i$ and we are done.

Otherwise all of the $G_i$ are monochromatic in distinct colors of $S$. We want to show that in $V_{\abs S}$ there are many pairs of vertices which span a copy of $(e,F_-)$ in each of the $G_i$. To this end we define the auxiliary directed graphs $A_i$. Let $A_0$ denote the complete directed graph on $V$. For $i=1,\dots,\abs S$ the vertex set of $A_i$ is $V_i$ and we connect two vertices $x\neq y\in V_i$ if $\left(x,y\right)\in E(A_{i-1})$ and if $(x,y)$ span a partite copy of $F_-$ in $G_{i}$ (partite with respect to the non root vertices, $x$ and $y$ lie in the same partition).

Define $(e,F^*)\coloneqq\bigsqcup_{j\in \set{\abs S}} (e, F)\times(e,F^*_i)\in\mathcal F^{\abs S+1}$. By definition of $A_{\abs S}$ every edge $e\in E\left(A_{\abs S}\right)$ spans a copy of $(e,F_-)$ in each of the $G_i$. Furthermore since every edge of $G_i$ spans a copy of $(e,F^*_i)$ the edge $e$ spans a copy of $(e,F^*_-)$. Finally observe that since the $G_i$ are monochromatic in pairwise different colors of $S$ the edge $e$ (if it would be presented to Painter in some later round) has to be colored with some color from $\set{r}\setminus S$.

We will need the following auxiliary claim about the density of $A_{\abs S}$ whose proof we defer.
\begin{claim}\label{claim:force regular claim}
For every integer $i\leq \abs S$ and $\kappa>0$ and small enough $\eps_1,\dots,\eps_i$ there exists $\gamma(\kappa,\eps_1,\dots,\eps_i)>0$ such that a.a.s.\ for all disjoint and equi-sized subsets $X,Y\subseteq_\kappa V_i$ the induced bipartite subgraph  $A_i[X,Y]$ contains at least $\gamma \abs X \abs Y \prod^i_{j=1} p_j$ edges, where
\[
	p_j = n^{v_{F}-2}\left(n^{v_{F^*_j}-2}p^{e_{F^*_j}}\right)^{e_{F}-1}.
\]
\end{claim}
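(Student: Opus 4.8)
The plan is to prove this claim by induction on $i$, with the generalized K\L R theorem (\autoref{theorem:lreg extension klr}) driving the inductive step. The guiding observation is that the statement we are proving for $A_i$ — a lower bound on the number of edges spanned by \emph{every} pair of linear-sized subsets of $V_i$ — is exactly the lower-regularity property of a root hypergraph that \autoref{theorem:lreg extension klr} takes as input. So, having controlled $A_{i-1}$, I would fix disjoint equi-sized $X,Y\subseteq_\kappa V_i$, regard $X$ and $Y$ as the two root classes and size-$n'=\The{n}$ subsets of the remaining classes of $G_i$ as the other vertex classes, and apply \autoref{theorem:lreg extension klr} to the rooted graph $(e,F)$ with background graph the corresponding restriction of $G_i\in\G{K_{v_F-1}}{\tilde n_i}{m_i}{\eps_i}$ and root hypergraph $A_{i-1}[X,Y]$, taking $q=\gamma_{i-1}\prod_{j<i}p_j=\o{1}$. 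Its conclusion counts partite copies of $F_-$ in $G_i$ whose root pair is an edge of $A_{i-1}$; each such copy witnesses an edge of $A_i$, so after dividing by the number of $F_-$-copies a single pair can carry this yields $A_i[X,Y]\geq\gamma_i\bigl(\prod_{j\le i}p_j\bigr)\abs X\abs Y$ with $\gamma_i\asymp\alpha\gamma_{i-1}$. The base case $i=0$ is trivial ($A_0$ is complete, $\gamma=1$); one may instead start at $i=1$ via the plain counting K\L R statement (\autoref{theorem:klr}) applied to $G_1$, which is also where the $q=\o{1}$ hypothesis would otherwise fail.

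Verifying the three hypotheses of \autoref{theorem:lreg extension klr} is most of the work. The density requirement, $m_i=\The{n^{v_{F^*_i}}p^{e_{F^*_i}}}\geq\alpha^{-1}(n')^{2-1/m_2(e,F,-\log_{n'}q)}$, is precisely what \autoref{lemma:klr density} provides, after using \autoref{lemma:f star density} to identify $-\log_n q$ with $1/\overline m^{r-i+1}_2(F)-1/\overline m^r_2(F)$ up to a $\o{1}$ term and recalling $p=\om{n^{-1/\overline m^r_2(F)}}$. Lower-regularity of $A_{i-1}[X,Y]$ with parameter $q$ is immediate from the inductive hypothesis once one notes (via the size bounds in \autoref{lemma:force regular}) that partition sizes drop by only a constant factor per step, so $\mu$-fraction subsets of $X,Y$ are still $\Om{1}$-fraction subsets of $V_{i-1}$. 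For upper-extensibility I would observe that an edge $\{x,y\}\in E(A_{i-1})$ spans a copy of $F_-$ in each of $G_1,\dots,G_{i-1}$ and, since each $G_j$ is $F^*_j$-spanning, these assemble into a spanning copy of $(e,F^*_-)$ for $F^*=\bigsqcup_{j<i}(e,F)\times(e_j,F^*_j)\in\mathcal F^i$; a short computation gives $n^{\overline v_{e,F^*_-}}p^{\overline e_{e,F^*_-}}=\prod_{j<i}p_j$, and since $m_1(F^*_-)<\overline m^r_2(F)$ by \autoref{lemma:rooted density} we are above the relevant $1$-rooted density, so \autoref{theorem:spencer} caps the $A_{i-1}$-degree of every single vertex at $(1+\o{1})n\prod_{j<i}p_j$ — the desired bound provided $A$ is chosen large in terms of $\gamma_{i-1}$. (The parameter $\eps_i$ only has to be small relative to $\mu$, $\kappa$ and the later constants, which is consistent with the order in which the $\eps_i$ are chosen in the surrounding proof.)

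The conclusion of \autoref{theorem:lreg extension klr} speaks about \emph{almost all} graphs in $\G{F_-}{n'}{m'}{\mu}$, so the next step is to transfer it to our specific, strategy-dependent graph $G_i$. Here I would invoke the quantitative spread estimate in the ``Furthermore'' clause of \autoref{lemma:force regular}: the probability that $G_i$ contains any one fixed bad graph $G^*$ is at most $(m_i/(\tilde n_i^2\delta))^{(e_F-1)m'}$, and since the container bound leaves at most $\beta^{m'}\binom{(n')^2}{m'}^{e_F-1}$ bad graphs, a union bound over them gives probability at most $\bigl(\beta\cdot\O{1}\bigr)^{m'}$ of $G_i[X,Y,\dots]$ being bad; because $m'=\The{n^{v_{F^*_i}}p^{e_{F^*_i}}}\gg n$ (\autoref{lemma:f star density}) this is small enough to absorb a further union bound over the $2^{\O{n}}$ choices of $X,Y$ and the remaining classes, and over the $\O{1}$-many choices of the nominated colors, of the $F^*_j$, and of $\tilde n_i,m_i$. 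Finally, converting the edge count of the multi-hypergraph $T$ (which is counted with multiplicity) into an edge count of the simple graph $A_i$ uses that a partite $F_-$-copy through a pair is in particular a copy of the balanced rooted graph $(e,F_-)$ spanned by that pair in $G^{k'}_{n,p}$, and that $p\ll n^{-1/\overline m^r_2(F)}\log n$ lies below $n^{-1/t}$ for some $t\in\bigl(\overline m^r_2(F),m_2(F)\bigr)$ with $m(e,F_-)=m_2(F)$; \autoref{lemma:span constant} then bounds that multiplicity by an absolute constant $D$, completing the step with $\gamma_i=\alpha\gamma_{i-1}/D$.

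I expect the main obstacle to be the combination of the transfer step and the upper-extensibility verification. For the former one must line up the exponents in the container bound, in the spread estimate, and in the union bound over all $(X,Y)$ so that a genuinely exponential-in-$m'$ (hence super-exponential-in-$n$) saving beats the $2^{\O{n}}$ union bound; for the latter one must correctly recognise the $\mathcal F^i$-structure hiding behind the edges of $A_{i-1}$, which in turn needs the spanning copies contributed by the different phases $G_1,\dots,G_{i-1}$ to be taken edge-disjoint — the degenerate overlapping configurations being controlled by \autoref{lemma:2-balanced intersection}. Everything else is a careful bookkeeping of the hierarchy of small constants ($\eps_i\ll\mu\ll\kappa$, etc.), which matches the order in which those constants are introduced in the ambient argument.
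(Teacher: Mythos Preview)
Your proposal is correct and follows essentially the same approach as the paper: induction on $i$ with the base case $A_0$ complete, the inductive step driven by \autoref{theorem:lreg extension klr} applied to $(e,F)$ with $G_R\leftarrow A_{i-1}[X,Y]$, the three hypotheses verified via \autoref{lemma:f star density}, \autoref{lemma:klr density}, the induction hypothesis (with shrunken $\kappa$), and \autoref{lemma:rooted density} together with \autoref{theorem:spencer}, the transfer to the strategy-dependent $G_i$ via the ``Furthermore'' clause of \autoref{lemma:force regular} union-bounded against the $\beta^m\binom{n^2}{m}^{e_{F_-}}$ exceptional graphs, and the multiplicity cap via \autoref{lemma:span constant}. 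One small inaccuracy: the edge-disjointness of the spanning copies from different phases comes directly from the nested structure $V(G_j)\subseteq V_{j-1}$ together with property~(2) of \autoref{def:spanning}, not from \autoref{lemma:2-balanced intersection}.
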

We invoke the claim for $i\leftarrow\abs S$ and $\kappa\leftarrow1/4$ to lower bound the number of pairs $x,y\in V_{\abs S}$ which span a dangerous copy of $F^*$ by
\[
	\frac{\gamma}{2}\left(\frac{\abs{V_{\abs S}}}{2}\right)^2 \prod^{\abs S}_{j=1} p_i \geq \gamma' n^{v_{F^*}}p^{e_{F^*}-1} \overset{\autoref{lemma:f star density}}{\gg} \frac{n^{2-1/\overline m^{r-\abs S}_2(F)}}{p} \gg \frac{n}{p},
\]
where $\gamma$ is the constant guaranteed by the claim and $\gamma'=\gamma \left(\prod_i \eta_i\right)^2/8$ is an absolute constant, which in particular does not depend on $\eps$.

We then present another $G_{n,p}$ to Painter. Painter will be forced to color at least a $p/r$-fraction of the edges in $A_{\abs S}$ with some color from $\set{r}\setminus S$ (or create a monochromatic copy of $F$). Thus we obtain a monochromatic set of $\gamma' n^{v_{F^*}}p^{e_{F^*}}/r$ edges which all span a copy of $F^*$. Next we remove all edges whose copies of $F^*$ intersect. \autoref{lemma:2-balanced intersection} together with Markov's inequality implies that with probability $1-\o{1}$ we remove only $\o{n^{v_{F^*}}p^{e_{F^*}}}$ edges.

So we are left with a $(e,F^*)$-spanning set of at least $\gamma' n^{v_{F^*}}p^{e_{F^*}}/2 \gg n$ edges $E'$. By \autoref{lemma:upper-uniform} $E'$ is \uf{\o{1}}{n^{v_{F^*}-2}\left(\left(\abs S k'+1\right)p\right)^{e_{F^*}}}. Therefore we may apply the sparse regularity lemma to $E'$ and obtain a graph from $\G{K_2}{\tilde n}{m}{\eps}$ whose density is a constant fraction of the density of $E'$.

Finally the probability that a fixed set of $s$ edges is $(e,F^*)$ spanning is at most $\left(n^{v_{F^*}-2}\left(\left(\abs S k'+1\right)p\right)^{e_{F^*}}\right)^s$. Since $m/\tilde n^2 = \Om{n^{v_{F^*}-2}p^{e_{F^*}}}$ (not depending on $\eps$) the probability bound holds for some $\delta$.

It remains to prove \autoref{claim:force regular claim}. We proceed by induction on $i$. $A_0$ is complete and thus the base case $i=0$ holds vacuously. So let $i\geq 1$ and fix some $\kappa>0$. Denote the lower bound on the density of $A_{i-1}$ guaranteed by the induction by
\[
	q=\Om{\prod_{j<i} p_j} \gg \frac{n^{-1/\overline m^{r-i+1}_2(F)}}{p}.
\] 
We define $\beta=(\delta'/(3e))^{e_{F_-}}$ and 
\[
A=\frac{ \prod^{i-1}_{j=1} p_j}{ q\kappa \prod^i_{j=1} \eta_j} = \The{1}.\]
Let $\mu(\beta)$ and $\alpha(A,\beta)$ denote the constant guaranteed by \autoref{theorem:lreg extension klr} when invoked with $(R,F)\leftarrow (e,F)$, $\beta\leftarrow \beta$ and $A\leftarrow A$.  Fix disjoint equi-sized sets $X, Y\subseteq_\kappa V_i$. 
Write $m=\ceil{\abs X \abs Y m_i/(2\tilde n_i^2)}$ and pick any subgraph $G'_i\subseteq G_i$ from $\G{F_-}{\abs X}{m}{\mu}$ such that its partitions which correspond to the roots of $F_-$ are $X$ and $Y$ (taking suitable vertex sets and a random subset of $m$ edges from each partition succeeds with probability $1-2^{-\The{m}}$).

If $T(G', A_{i-1}[X,Y])$ contains at least 
\[
\alpha \left(\frac{m}{\abs X^2}\right)^{e_{F_-}} \abs X^{v_F} q= \The{n^2\prod_{j\leq i} p_j}
\]
 edges, then since by \autoref{lemma:span constant} every edge $e$ spans at most a constant number of copies of $F_-$ the density of $A_{i}[X,Y]$ is of the correct order of magnitude.

Otherwise we want to apply \autoref{theorem:lreg extension klr} with $G\leftarrow G'$ and $G_R \leftarrow A_{i-1}[X,Y]$ (viewed as an undirected graph) and $q\leftarrow q$. To apply \autoref{theorem:lreg extension klr} it suffices to check the following:
\begin{enumerate}
\item The number of edges in $G'$ is in
\[
\Om{n^{v_{F^*_i}}p^{e_{F^*_i}}} \overset{\autoref{lemma:f star density}}{\gg} n^{2-1/\overline m^{r-i+1}_2(F) } \overset{\autoref{lemma:klr density}}{\geq} n^{2-1/m_2(e,F,-\log_n q)},
\]
since for $n$ large enough $-\log_n  q \leq 1/\overline m_2^{r-i+1}(F)-1/\overline m^r_2(F)$.
\item If we invoke the induction hypothesis with say $\kappa\leftarrow \kappa \eta_i\mu$ then $A_{i-1}[X,Y]$ is \lreg{F}{\mu}{q}.
\item To see that it is also \ue{F}{Aq} observe that every edge of $A_{i-1}$ spans a copy of of $(e,F'^*_-)\coloneqq\bigsqcup_{j<i} (e,F_-)\times (e,F^*_j)$. So for upper uniformity it suffices to bound the number of copies of $F'^*_-$ spanned by a single vertex. But our $p$ is such that we are above the $1$-density of $F'^*_-$ (\autoref{lemma:rooted density}). Thus this number is concentrated around its expectation (\autoref{theorem:spencer}), which is upper bounded by
\[
n^{v_{F'^*_-} - 1}p^{e_{F'^*_-}} = n \prod_{j<i} p_j \leq \abs X\frac{\prod_{j<i} p_j }{\kappa \prod_{j\leq i} \eta_j}   = Aq\abs X.
\]
\end{enumerate}

Therefore we can apply \autoref{theorem:lreg extension klr}  and $G'$ must come from a set of at most 
\[
\beta^{m} \binom{\abs X^2}{m}^{e_{F_-}} \leq \beta^{m} \left(\frac{e\abs X^2}{m}\right)^{e_{F_-}m} \leq  \beta^{m} \left(\frac{2e \tilde n_i^2}{m_i}\right)^{e_{F_-}m} 
\]
graphs. But then for our choice of $\beta$ the probability that $G'\subseteq G_i$ is in $\o{1}$.
\end{description}
\end{proof}
The proof of \autoref{lemma:dangerous} proceeds similarly to the proof of \autoref{claim:force regular claim} from the previous lemma. The only difference is that we replace $A_i$ with the hypergraph of $v_F$-tuples which span a copy of $F\times(e,F_-)$ in each of the graphs $G_i$.
\dangerous*
\begin{proof}
Let $t=v_{F}-1$ and let $k, \delta$ be such that \autoref{lemma:force regular} holds for all $S\subseteq \set{r}$, $\abs S \leq r-2$. Let $\eps_1,\dots,\eps_{r-1}$ denote constants whose value we will determine later in reverse order (that is $\eps_i$ will depend on $\eps_{i+1},\dots,\eps_{r-1}$.)

We ask Painter to color an instance of $G^k_{n,p}$. Applying \autoref{lemma:force regular} with $t\leftarrow t$, $S\leftarrow \emptyset$, $\eps\leftarrow \eps_1$ we obtain a constant $\eta_1(\eps_1)$, a graph $F^*_1\in \mathcal F^1$ and an $F^*_1$-spanning graph $G_1\in \G{K_t}{\tilde n_1}{m_1}{\eps_1}$ monochromatic in some color $s_1$. Pick one of the vertex partitions of $G_1$ arbitrarily and call it $V_1$. We now present Painter with a second instance of $G^k_{n,p}$ but only consider the subgraph induced by $V_1$ which is distributed like a $G^k_{\tilde n_1,p}$. Invoking \autoref{lemma:force regular} a second time with $\eps \leftarrow \eps_2$ and $S\leftarrow\left\{s_1\right\}$ we obtain a second graph $G_2\in \G{K_t,\tilde n_2, m_2,\eps_2}$. We repeat this procedure $r-1$ times and obtain
\begin{enumerate}
\item sets $V=V_0 \supset V_1 \supset \dots \supset V_{r-1}$ such that $\abs {V_i} \geq \eta_i \abs{V_{i-1}}$ for $i\in \set{r-1}$,
\item graphs $F^*_i \in \mathcal F^{\leq i}$ where $i\in \set{r-1}$,
\item monochromatic graphs $G_i\subseteq \G{K_t}{\tilde n_i}{m_i}{\eps_i}\subseteq G^{i\cdot k}_{n,p}[V_{i-1}]$ in pairwise different colors, where $\tilde n_i= \abs {V_i}$, $m_i\geq \eta_i \tilde n^{v_{F^*_i}}p^{e_{F^*_i}}$
such that $G_i$ is $F^*_i$-spanning in $G^{i\cdot k}_{n,p}[V_{i-1}]$.
\end{enumerate}
Furthermore for every graph $G'$ with $\omega(n)$ edges we have
\[
	\Pr{G'\subseteq G_i}\leq \left(\frac{m_i}{\delta\tilde n_i^2}\right)^{\abs{E\left(G'\right)}}.
\]
Observe that this probability is over the phases $(i-1)\cdot k+1,\dots,i\cdot k$ and that $G_i$ is fixed after the first $i\cdot k$ phases.

Let $A_0$ denote the complete directed $v_F$-uniform hypergraph on $V=V_0$. We identify the edges of $A_0$ with a (hypothetical) copy of $F$ in $V$ (depending on the automorphisms of $F$ different (directed) edges might represent the same copy of $F$). Now for $i\in \set{r-1}$ let $A_i$ denote the directed $v_F$-uniform hypergraph on $V_i\subseteq V_{i-1}$ where $e\in E(A_i)$ if $e\in E(A_{i-1})$ and additionally the edges of $e$ (when viewed as a graph $F(e)\sim F$) are the roots of pairwise edge disjoint copies of $F_-$ in $G_i$.

Define \begin{align*}
\tilde F^*_{i} &=  \bigsqcup_{j < i} (e,F)\times(e,F^*_j) \in \mathcal F^{i},\\
p_i &= n^{v_F-2}\left(n^{v_{F^*_i}-2}p^{e_{F^*_i}}\right)^{e_F-1}, \\
q_i &= \prod_{j\in \set{i}} p_i = n^{v_{\tilde F^*_{i+1}}-2} p^{e_{\tilde F^*_{i+1}}-1}.
\end{align*}
Since $G_i$ is $F^*_i$-spanning the edges of $F(e)$ (for $e\in A_i$) are not only roots of pairwise edge disjoint copies of $F_-$ but even of (still pairwise edge disjoint) copies of $F_-\times (e,F^*_i)$. Furthermore these copies are disjoint from those certifying membership in $A_1,\dots,A_{i-1}$. Thus for every $e\in E(A_i)$ the edges of $F(e)$ are the roots of pairwise edge disjoint copies of $\left(\tilde F_{i+1}\right)_-$.

\begin{claim}\label{claim:A_i upper-uniform}
Suppose that $X_1,\dots,X_{v_F}\subseteq V_i$ are mutually disjoint and of size $\tilde n$. Then $A_i[X_1,\dots,X_{v_F}]$, when viewed as a undirected hypergraph from $\mathcal R(V(F),\tilde n)$, is \ue{F}{(n/\tilde n)^{v_F} q_i} provided that $i\leq r-1$.
\end{claim}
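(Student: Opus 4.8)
The plan is to check the upper-extensibility property directly: for every induced subgraph $F'\subseteq F$ and every tuple $\tau=(v_j)_{j\in V(F')}$ with $v_j\in X_j$, I must bound the degree of $\tau$ in $A_i[X_1,\dots,X_{v_F}]$ by $\left((n/\tilde n)^{v_F}q_i\right)^{e_F-e_{F'}}\tilde n^{v_F-v_{F'}}$. Two cases are trivial. For $i=0$ the hypergraph $A_0$ is complete and $q_0=1$, so the target already exceeds the crude bound $\tilde n^{v_F-v_{F'}}$ on the degree. For $F'=F$, viewing $A_i$ as an undirected hypergraph, the degree of $\tau$ is at most $1=\left((n/\tilde n)^{v_F}q_i\right)^{0}\tilde n^{0}$. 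So assume $1\le i\le r-1$ and $F'\neq F$; since $F$ has minimum degree at least $1$ this forces $e_F-e_{F'}\ge1$.

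For the remaining case I would use the structural fact recorded just before the claim: every hyperedge $e\in E(A_i)$ carries a copy $F(e)\sim F$ whose $e_F$ edges are the roots of pairwise edge-disjoint copies of $(\tilde F^*_{i+1})_-$ in the host graph $G^{i\cdot k}_{n,p}$. Hence the degree of $\tau$ is at most the number of ways to extend $\tau$ to such a copy of $F$ and then attach a copy of $(\tilde F^*_{i+1})_-$ to each of its $e_F$ edges (dropping edge-disjointness can only increase the count). Split the $e_F$ edges into the $e_{F'}$ edges spanned by $\tau$ and the $e_F-e_{F'}$ remaining ones. For an edge spanned by $\tau$ the root pair is fixed, and here \autoref{lemma:span constant} applied to $(e,(\tilde F^*_{i+1})_-)$ bounds the number of attached copies by $\O1$: by \autoref{lemma:2-balanced building} the graph $\tilde F^*_{i+1}$ is $2$-balanced with $m_2=m_2(F)$, so $(e,(\tilde F^*_{i+1})_-)=(e,\tilde F^*_{i+1}-e)$ is a balanced rooted graph with rooted density $m_2(F)$, and our $p$ sits below the $m_2(F)$-threshold. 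Thus these edges contribute only a constant factor.

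It then remains to count the number of ways to place the remaining $v_F-v_{F'}$ vertices and to attach a copy of $(\tilde F^*_{i+1})_-$ to each of the $e_F-e_{F'}$ non-root edges. Up to degeneracies this is the number of copies rooted at $\tau$ of the rooted graph $H^{*}:=(V(F'),F)\times(e,(\tilde F^*_{i+1})_-)$ in $G^{i\cdot k}_{n,p}$. Since $\tilde F^*_{i+1}\in\mathcal F^{i+1}$ with $2\le i+1\le r$, \autoref{lemma:rooted density} gives $m(V(F'),H^{*})<\overline m^{i+1}_2(F)\le\overline m^r_2(F)$, so $p=\om{n^{-1/\overline m^r_2(F)}}$ lies above this rooted density and \autoref{theorem:spencer} shows the count is $(1\pm\o1)\mu$ with $\mu\asymp n^{v_F-v_{F'}}(pq_i)^{e_F-e_{F'}}$; here one uses $q_i=n^{v_{\tilde F^*_{i+1}}-2}p^{e_{\tilde F^*_{i+1}}-1}$, so each attachment contributes $n^{v_{\tilde F^*_{i+1}}-2}p^{e_{\tilde F^*_{i+1}}}=pq_i$. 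Combining the pieces, the degree of $\tau$ is $\O{n^{v_F-v_{F'}}(pq_i)^{e_F-e_{F'}}}$; the $q_i$ cancels against the target and the ratio is $\O{(\tilde n/n)^{v_F(e_F-e_{F'})-(v_F-v_{F'})}\,p^{e_F-e_{F'}}}=\o1$, using $v_F(e_F-e_{F'})\ge v_F-v_{F'}$, $\tilde n\le n$ and $p=\o1$. So the claimed bound holds for $n$ large, with ample room.

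The step I expect to be the real obstacle is the bookkeeping for the degeneracies glossed over above, namely those in which two attached copies of $(\tilde F^*_{i+1})_-$, or an attached copy and the core copy of $F$, share a vertex outside their common roots. Such a configuration is a copy of a proper quotient of $H^{*}$, and I would dispose of it by a dichotomy: either it reuses a single copy of $(\tilde F^*_{i+1})_-$ on a fixed pair of vertices, bounded by $\O1$ via \autoref{lemma:span constant}, or it spans a rooted subgraph of $G^{i\cdot k}_{n,p}$ of rooted density strictly exceeding $m_2(F)$, which \autoref{lemma:span constant} (again below the $m_2(F)$-threshold) caps at $\O1$ per root tuple. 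Making this dichotomy precise, and checking that these constant and lower-order contributions fit inside the slack from the previous paragraph, is the only part requiring genuine care; everything else is density bookkeeping assembled from \autoref{theorem:spencer}, \autoref{lemma:span constant}, \autoref{lemma:rooted density}, and the $2$-balancedness of $\tilde F^*_{i+1}$ (\autoref{lemma:2-balanced building}).
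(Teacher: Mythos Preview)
Your approach is the paper's: bound the degree of a tuple $\tau$ fixing $V'\subsetneq V(F)$ by the number of copies of $(V',F)\times(e,(\tilde F^*_{i+1})_-)$ rooted at $\tau$, invoke \autoref{lemma:rooted density} to meet the density hypothesis of \autoref{theorem:spencer}, and read off the expectation. The paper's proof is two lines; it drops the root-edge attachments entirely (your $\O{1}$ bound on them via \autoref{lemma:span constant} is harmless but unnecessary) and does not discuss degeneracies.

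There is one arithmetic slip. Each attached copy is $(\tilde F^*_{i+1})_-$, which has $e_{\tilde F^*_{i+1}}-1$ edges; moreover in the product $(V',F)\times(e,(\tilde F^*_{i+1})_-)$ the non-root edge $e'$ of $F$ is deleted (since the root edge is absent from the attached graph), so $e'$ itself contributes no factor of $p$. Hence each attachment contributes $n^{v_{\tilde F^*_{i+1}}-2}p^{e_{\tilde F^*_{i+1}}-1}=q_i$, not $pq_i$, and the correct expectation is $\mu\asymp n^{v_F-v_{F'}}q_i^{e_F-e_{F'}}$, which is exactly what the paper obtains. The ratio to the target is then $(\tilde n/n)^{v_F(e_F-e_{F'})-(v_F-v_{F'})}\le1$, with no extra factor of $p^{e_F-e_{F'}}$; your claimed $\o{1}$ slack is an artefact of the error. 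The bound still holds---the exponent is nonnegative once $e_F-e_{F'}\ge1$, and the disjointness of the $X_j$ forces $\tilde n\le n/v_F$, which supplies the needed constant-factor room---just not with the margin you assert. Your degeneracy concern is legitimate (the paper silently elides it), though the bookkeeping you anticipate is routine rather than a genuine obstacle.
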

\begin{proof}[Proof of claim] $q_0=1$ and thus the claim holds vacuously for $i=0$. For $i\geq 1$
 fix some $\sigma\subseteq V(A_i)$. If there exists $e$ such that $\sigma\subsetneq e \in E(A_i)$ then $\sigma$ fixes some $V'\subsetneq V(F)$ and the degree of $\sigma$ is at most the number of copies of $(V', F)\times(e, \left(\tilde F^*_{i+1}\right)_-)$ rooted in $\sigma$. By \autoref{lemma:rooted density} 
\[
	m(V', (V',F)\times(e,\left(\tilde F^*_{i+1}\right)_-)) < \overline m^{i+1}_2(F) \leq \overline m^r_2(F)
\]
 and thus by \autoref{theorem:spencer} this number is concentrated around its expectation which is at most $n^{v_F-\abs{V'}}q_i^{e_F-e_{F[V']}}$ as required for the upper-extensibility of $A_i$.
\end{proof}

\begin{claim} For every integer $i\leq r-1$ and $\kappa>0$ and small enough $\eps_1,\dots,\eps_i$ there exists $\gamma(\kappa, \eps_1,\dots,\eps_i)>0$ such a.a.s.\ for all pairwise disjoint equi-sized sets $X_1,\dots,X_{v_F}\subseteq_\kappa V_{i}$ the number of directed edges in $E\left(A_i\left[X_1,\dots,X_{v_F}\right]\right)$ is at least $\gamma \abs{X_1}^{v_F} q_i^{e_F}$.
\end{claim}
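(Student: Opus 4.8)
The plan is to mirror, essentially line for line, the proof of \autoref{claim:force regular claim}, with the pair-hypergraphs $A_i$ there replaced by the $v_F$-uniform hypergraphs $A_i$ defined here and the rooted graph $(e,F)$ replaced by $(V(F),F\times(e,F))$ throughout. We induct on $i$. The base case $i=0$ is immediate: $A_0$ is the complete (``directed'') $v_F$-uniform hypergraph on $V$ and $q_0=1$, so any disjoint equi-sized $X_1,\dots,X_{v_F}$ span $\Theta(\abs{X_1}^{v_F})=\Theta(\abs{X_1}^{v_F}q_0^{e_F})$ edges of $A_0$.

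For $i\geq1$ fix $\kappa>0$ and disjoint equi-sized $X_1,\dots,X_{v_F}\subseteq_\kappa V_i$; write $\tilde n$ for their common size. After passing, if necessary, to subsets of a slightly smaller common size (harmless for an $\Omega$-bound), one extracts from $G_i\in\G{K_{v_F-1}}{\tilde n_i}{m_i}{\eps_i}$ a sub-member $G'_i\in\G{F\times(e,F_-)}{\tilde n}{m}{\mu}$ whose $v_F$ root classes are $X_1,\dots,X_{v_F}$: place all roots of $F\times(e,F_-)$ --- which are pairwise non-adjacent --- into the single class $V_i$ of $G_i$, and distribute the non-root vertices over the remaining $v_F-2$ classes of $G_i$ according to a fixed $(v_F-1)$-colouring of $F\times(e,F_-)$; here $m=\ceil{\tilde n^2 m_i/(2\tilde n_i^2)}$, and a random such $G'_i$ succeeds with probability $1-2^{-\Theta(m)}$ since the relevant bipartite pairs of $G_i$ remain $\mu$-regular once $\eps_i$ is chosen small enough in terms of later parameters. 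We then feed \autoref{theorem:lreg extension klr} the rooted graph $(R,F)\leftarrow(V(F),F\times(e,F))$ (so that $(F\times(e,F))_-=F\times(e,F_-)$), the host $G\leftarrow G'_i$, the constraint hypergraph $G_R\leftarrow A_{i-1}[X_1,\dots,X_{v_F}]$ viewed undirected, the function $q\leftarrow q$ where $q=\Omega(q_{i-1})$ is the bound from the induction hypothesis for $i-1$ (invoked with $\kappa$ shrunk by a constant so that $\mu$-dense subsets of $X_1,\dots,X_{v_F}$ still span $\geq q^{e_F}\prod_j\abs{V'_j}$ edges of $A_{i-1}$), and the parameters $\beta=(\delta/(3e))^{e_{F\times(e,F_-)}}$ with $\delta$ the constant of \autoref{lemma:force regular} for $t=v_F-1$, and $A=\Theta(1)$; this produces the constants $\mu(\beta),\alpha(A,\beta)$.

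Its three hypotheses are verified exactly as in \autoref{claim:force regular claim}. First, $G'_i$ has $\Theta(\tilde n^{v_{F^*_i}}p^{e_{F^*_i}})$ edges, which by \autoref{lemma:f star density} exceeds $n^{2-1/\overline m_2^{r-i+1}(F)}$ and hence, by the second inequality of \autoref{lemma:klr density}, exceeds $n^{2-1/m_2(V(F),F\times(e,F),-\log_n q)}$ once $n$ is large (because $-\log_n q\leq 1/\overline m_2^{r-i+1}(F)-1/\overline m_2^r(F)$ for large $n$). Second, $A_{i-1}[X_1,\dots,X_{v_F}]$ is \lreg{F}{q}{\mu} by the induction hypothesis, as arranged. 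Third, it is \ue{F}{Aq}: this is precisely \autoref{claim:A_i upper-uniform} applied with index $i-1$ (it delivers upper-extensibility with parameter $(n/\tilde n)^{v_F}q_{i-1}\leq Aq$, and itself rests on the rooted-density bound of \autoref{lemma:rooted density} together with \autoref{theorem:spencer}). With the hypotheses in place, either $T(G'_i,A_{i-1}[X_1,\dots,X_{v_F}])\geq \alpha(m/\tilde n^2)^{e_{F\times(e,F_-)}}q^{e_F}\tilde n^{v_{F\times(e,F)}}$, a quantity which --- using $m/\tilde n^2\asymp n^{v_{F^*_i}-2}p^{e_{F^*_i}}$, $q=\Theta(\prod_{j<i}p_j)$, and $p_j=n^{v_F-2}(n^{v_{F^*_j}-2}p^{e_{F^*_j}})^{e_F-1}$ --- works out to $\Omega(\tilde n^{v_F}q_i^{e_F})$, the surplus powers of $n$ cancelling against $\tilde n^{v_{F\times(e,F)}-v_F}=\tilde n^{e_F(v_F-2)}$; dividing by the $O(1)$ copies of $F\times(e,F_-)$ that any fixed $v_F$-tuple can span (\autoref{lemma:span constant}, using that $F\times(e,F)$ is $2$-balanced with $2$-density $m_2(F)>\overline m_2^r(F)$ and that $p$ lies below the corresponding rooted density) gives $\abs{E(A_i[X_1,\dots,X_{v_F}])}\geq\gamma\abs{X_1}^{v_F}q_i^{e_F}$ for a suitable $\gamma$. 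Or else $G'_i$ lies in a family of at most $\beta^m\binom{\tilde n^2}{m}^{e_{F\times(e,F_-)}}$ graphs, and a union bound over this family via $\Pr{G'\subseteq G_i}\leq(m_i/(\delta\tilde n_i^2))^{\abs{E(G')}}$ from \autoref{lemma:force regular} bounds the probability of this alternative by $(\beta(2e/\delta)^{e_{F\times(e,F_-)}})^m=(2/3)^{e_{F\times(e,F_-)}m}=e^{-\omega(n)}$, since $m=\omega(n)$. A final union bound over the $2^{O(n)}$ choices of $X_1,\dots,X_{v_F}$ completes the induction.

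The only step demanding genuine care --- and the one I expect to be the main obstacle --- is the bookkeeping of the quantifiers and of the density windows: the constants $\eps_i$ (and the derived $\kappa$'s and $\mu$) must be fixed in the correct reverse order so that the lower-regularity needed at stage $i$ is exactly what stage $i-1$ provides, and one must check that $-\log_n q$ genuinely stays inside $[0,\ 1/\overline m_2^{r-i+1}(F)-1/\overline m_2^r(F)]$ --- as required both by \autoref{lemma:klr density} and by the density hypothesis of \autoref{theorem:lreg extension klr} --- simultaneously for all $i\leq r-1$. Everything else is a faithful transcription of the proof of \autoref{claim:force regular claim} with ``pair'' replaced by ``$v_F$-tuple'' and $(e,F)$ by $(V(F),F\times(e,F))$.
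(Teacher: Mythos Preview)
Your proposal is correct and follows essentially the same approach as the paper's own proof: induction on $i$, embedding a copy of $\G{F\times(e,F_-)}{\tilde n}{m}{\mu}$ inside $G_i$ with the root classes placed in $X_1,\dots,X_{v_F}\subseteq V_i$, and then applying \autoref{theorem:lreg extension klr} with $(R,F)\leftarrow (V(F),F\times(e,F))$ using \autoref{lemma:f star density} and \autoref{lemma:klr density} for the density hypothesis, the induction hypothesis (with shrunk $\kappa$) for lower-regularity, \autoref{claim:A_i upper-uniform} for upper-extensibility, \autoref{lemma:span constant} to pass from the multi-hypergraph count to $\abs{E(A_i[\cdot])}$, and the probability bound from \autoref{lemma:force regular} for the final union bound. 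The only discrepancies are inessential constant choices (your $m$ and $\beta$ differ from the paper's by harmless factors of~$e_F$).
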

Invoking the claim for $i=r-1$ and say $\kappa = 1/(2v_F)$ proves the Lemma.

To prove the claim we will proceed by induction on $i$. For $i=0$ the statement holds vacuously with $\gamma = 1$ since $A_0$ is complete. For $i\geq 1$ we fix equi-sized and pairwise disjoint sets $X_1,\dots, X_{v_F}\subseteq_\kappa V_i$ of size $\tilde n$. Define
\[
m=\ceil{\frac{\tilde n^2 m_i}{2e_F\tilde n_i^2}}  \quad\quad\text{and}\quad\quad
\beta = \left(\frac{\delta}{4e_Fe}\right)^{e_F\cdot(e_F-1)}.
\]
Let $\mu=\mu(\beta)$ be given by \autoref{theorem:lreg extension klr} (invoked with $(R,F)\leftarrow (V(F),F\times(e,F_-))$) and let $\gamma'=\gamma(\kappa\eta_i\mu,\eps_1,\dots\eps_{i-1})$ denote the constant guaranteed by the induction hypothesis. Finally define
\[
A=\frac{1}{\gamma'\left(\kappa\prod_{j\in\set{i}}\eta_j\right)^{v_F}}.
\]

$F\times (e,F_-)$ is $t$-partite and thus for $\eps_i$ small enough depending on $\mu$ we can, using standard techniques, find a graph $G'_i\subseteq G_i$ from $\G{F\times (e,F_-)}{\tilde n}{m}{\mu}$ such that the vertex partitions corresponding to to the vertices of the (missing) inner copy of $F$ are the sets $X_1,\dots, X_{v_F}$.

If the number of edges in $T(G'_i, A_{i-1}[X_1,\dots,X_{v_F}])$ is at least
\[
\The{ \left(\frac{m}{\tilde n^2}\right)^{e_{F}\cdot (e_F-1)} \tilde n^{v_F + e_F\cdot (v_F-2)} q_{i-1}^{e_F} }= \The{p_{i}^{e_{F}}  n^{v_F}  q_{i-1}^{e_F}} = \The{n^{v_F}q_i^{e_F}},
\]
then we are done, since by \autoref{lemma:2-balanced building} $\left(V(F),F\times \left(e,F_-\right)\right)$ is balanced with density $m_2(F)$ and thus \autoref{lemma:span constant} the multiplicity of all edges is at most a constant.

Otherwise we invoke the induction hypothesis with $\kappa\leftarrow \kappa \eta_i \mu$ to deduce that  $A_{i-1}[X_1,\dots,X_{v_F}]$ is \lreg{F}{q}{\mu} where $q=\gamma'q_{i-1}$. We have
\[
\left(\frac{n}{\tilde n}\right)^{v_F} q_{i-1} \leq \frac{q_{i-1}}{ (\kappa \prod_{j\in\set{i}}\eta_j)^{v_F}} =Aq
\]
and therefore by \autoref{claim:A_i upper-uniform} the hypergraph $A_{i-1}[X_1,\dots,X_{v_F}]$ is \ue{F}{A q}. Finally
\begin{align*}
m &= \Om{n^{v_{F^*_i}}p^{e_{F^*_i}}} \overset{\autoref{lemma:f star density}}{\gg} n^{2-1/\overline m^{r-i+1}_2(F)} \overset{\autoref{lemma:klr density}}{\geq} n^{2-1/m_2(V(F),F\times(e,F),q_i)} \\
&\asymp n^{2-1/m_2(V(F),F\times(e,F),q)}
\end{align*}
and thus we may apply apply \autoref{theorem:lreg extension klr} with $q\leftarrow q$, $G\leftarrow G'_i$ and $G_R\leftarrow A_{i-1}[X_1,\dots,X_{v_F}]$ to deduce that $G'_i$  is from a set graphs of size at most
\begin{align*}
\beta^{m}\binom{\tilde n^2}{m}^{e_F\cdot(e_F-1)}
&\leq \beta^{m}\left(\frac{e\tilde n^2}{m}\right)^{m\cdot e_F\cdot(e_F-1)} \\
&\leq \beta^{m}\left(\frac{e2e_F\tilde n_i^2}{m_i}\right)^{m\cdot e_F\cdot(e_F-1)} \\
&= \left(\frac{\tilde n_i^2}{2\delta m_i}\right)^{m\cdot e_F\cdot(e_F-1)},
\end{align*}
Since $m\gg n$ a union bound over the $2^{\The{n}}$ choices for the sets $X_1,\dots,X_{v_F}$ together with the bound 
\[
\Pr{G'\subseteq G_i} \leq \left(\frac{m_i}{\tilde n_i^2 \delta}\right)^{\abs {E(G')}}
\]
from \autoref{lemma:force regular} guarantees that a.a.s.\ no such subgraph $G'_i$ exists.
\end{proof}

	\subsection{Auxiliary Lemmas}\label{section:auxiliary}

In \autoref{section:main proof} we stated a number of auxiliary statements without proof (namely \autoref{prop:FtimesFstar density}, \autoref{lemma:klr density} and \autoref{lemma:rooted density}). The proofs of these statements are somewhat technical and are given in this section.

We start with the proof of \autoref{lemma:klr density}  for which we need the following simple bound.
\begin{lemma}\label{lemma:m2<=2m}
Every graph $F$ on at least $3$ vertices and with at least one edge satisfies $m_2(F) \leq 2m(F)$.
\end{lemma}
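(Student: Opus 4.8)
The statement is an inequality between two graph parameters, so the natural approach is to pass to the subgraph witnessing $m_2(F)$ and compare it against a single term of $m(F)$. Concretely, let $H\subseteq F$ be a subgraph with $v_H\geq 3$ attaining $m_2(F)=d_2(H)=(e_H-1)/(v_H-2)$. (If no subgraph with at least three vertices gives a positive $2$-density, then $m_2(F)\leq 0 < 2m(F)$ since $F$ has an edge, and we are done; so we may assume such an $H$ exists, and then $e_H\geq 1$, in fact we may take $e_H\geq 2$.) Since $m(F)=\max_{H'\subseteq F} e_{H'}/v_{H'}\geq e_H/v_H$, it suffices to prove the purely numerical inequality
\[
\frac{e_H-1}{v_H-2}\;\leq\; 2\cdot\frac{e_H}{v_H}.
\]

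\textbf{Key step.} Both denominators are positive because $v_H\geq 3$, so clearing them turns the target inequality into $(e_H-1)v_H\leq 2e_H(v_H-2)$, equivalently
\[
0\;\leq\; e_H(v_H-4)+v_H.
\]
For $v_H\geq 4$ this holds trivially since $e_H,v_H\geq 0$. For $v_H=3$ it reads $0\leq 3-e_H$, which holds because a graph on three vertices has at most $\binom{3}{2}=3$ edges. This covers all cases and completes the argument.

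\textbf{Main obstacle.} There is essentially no analytic difficulty here; the only thing to be careful about is the bookkeeping around degenerate subgraphs — making sure the maximizer of $m_2$ may be taken on at least three vertices (so that $v_H-2>0$ and the division is legitimate), and noticing that the single critical case $v_H=3$ is settled by the crude bound $e_H\leq 3$ rather than by any density comparison. Everything else is the one-line cross-multiplication above together with the monotonicity $m(F)\geq e_H/v_H$.
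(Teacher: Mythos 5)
Your proposal is correct and takes essentially the same approach as the paper: pass to a subgraph $H$ attaining $m_2(F)$, reduce to the numerical inequality $(e_H-1)/(v_H-2)\leq 2e_H/v_H$, and handle $v_H\geq 4$ (trivial) and $v_H=3$ (via $e_H\leq 3$) separately. The paper phrases this through the $2$-balanced subgraph attaining $m_2$ and uses the chain $(e-1)/(v-2)\leq e/(v-2)\leq 2e/v$, but the arithmetic and case split are identical to yours.
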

\begin{proof}
One checks that the statement holds for all graphs on $3$ vertices.
If $F$ is $2$-balanced and contains at least $4$ vertices then
\[
m_2(F) = \frac{e-1}{v-2} \leq \frac{e}{v-2} \leq 2\frac{e}{v} = 2d(F) \leq 2m(F).
\]

Otherwise let $F'\subseteq F$ denote a graph that attains the $m_2$ density of $F$. By the above
\[
m_2(F) = m_2(F') \leq 2m(F') \leq 2m(F).
\]
\end{proof}
\klrdensity*
\begin{proof}Write $p=n^{-1/\overline m_2^k(F)}$ and $q = n^{-1/\overline m_2^k(F) + 1/\overline m^r_2(F)}$. The first inequality is equivalent to
\[
\min_{\substack{(R,F')\subseteq (e,F) \\ \overline e_{F'}\geq 1} } n^{v_{F'}-2} p^{\overline e_{F'}-1} q^{e_{F'[R]}} \geq 1.
\]

Fix such a rooted graph $(R,F')$.
If $\abs R\leq 1$ then $F'\subseteq F-\left\{e\right\}$ and $e_{F'[R]}=0$. Since $p \geq n^{-1/\overline m^2_2(F)} \geq n^{-1/m_2(F-\left\{e\right\})}$ we have
\[
 n^{v_{F'}-2} p^{e_{F'}-1} \geq 1.
\]

Otherwise $\abs R = 2$ and without loss of generality $e_{F'[R]}=1$ and $\overline e_{F'} = e_{F'} -1$. We rewrite the above as
\[
n^{v_{F'}-2} p^{e_{F'}-2}q =   \frac{n^{v_{F'}-2} p^{e_{F'}}}{pn^{-1/\overline m^r_2(F)}} \geq \frac{n^{v_{F'}-2} p^{e_{F'}}}{n^{-2/m_2(F)}}.
\]
By definition of $\overline m_2^{k}$ we have $n^{v_{F'}-2} p^{e_{F'}}\geq n^{-1/\overline m_2^{k-1}(F)}$. Together with
\[
n^{2/m_2(F)}  \overset{\autoref{lemma:m2<=2m}}{\geq}  n^{1/m(F)} \geq n^{1/\overline m^{k-1}_2(F)} 
\]
this implies the desired bound.

The second inequality is equivalent to
\[
\min_{\substack{(R,H)\subseteq (V(F),F\times (e,F)) \\ \overline e_H \geq 1}} n^{v_{H}-2} p^{\overline e_{H}} q^{e_{H[R]}} \geq 1.
\]

For $e\in E(F)$ let $F_e\subseteq H$ denote the graph isomorphic to a subgraph of $F$ which is attached to the root $e$ in $H$. There must exist at least on edge $e'$ such that $F_{e'}$ contains at least one non root edge. For such an edge we apply the first inequality to obtain
\[
n^{v_{F_{e'}}-2}p^{e_{F_{e'}}-e_{F_{e'}[e']}-1}q^{e_{F_{e'}[e']}} \geq 1.
\]
Thus the minimization is at least \[
n^{v_{H[R]-2}}\prod_{e\in E(F)\setminus \left\{e'\right\}} n^{v_{F_e}-v_{F_e[e]}} p^{e_{F_e}-e_{F_e[e]}}q^{e_{F_e[e]}}.
\]

If for some $e\in E(F)$ we have $v_{F_e[e]}\leq 1$ then $F_e$ is isomorphic to a subgraph of $F_-$ and
\[
n^{v_{F_e}-v_{F_e[e]}} p^{e_{F_e}-e_{F_e[e]}}q^{e_{F_e[e]}}
\geq n^{v_{F_e}-1} p^{e_{F_e}}\geq 1,
\]
since we are above the $1$-density of $F_-$. In particular if $H$ does not contain at least two root vertices then we are done. If $v_{F_e[e]}=2$ then
\begin{align*}
n^{v_{F_e}-v_{F_e[e]}} p^{e_{F_e}-e_{F_e[e]}}q^{e_{F_e[e]}}
&\geq n^{v_{F_e}-2} p^{e_{F_e}-1}q
= \frac{n^{v_{F_e}-2} p^{e_{F_e}}}{n^{-1/\overline m^r_2(F)}} \\
&\geq \frac{n^{-1/\overline m^{k-1}_2(F)}}{n^{-1/\overline m^r_2(F)}} 
\geq \frac{n^{-1/m(F)}}{n^{-1/m_2(F)}} 
\geq n^{-1/m_2(F)}.
\end{align*}

Thus the original minimization reduces to
\[
\min_{\substack{F'\subseteq F \\ v_{F'}\geq 2}} n^{v_{F'}-2}n^{-(e_{F'}-1)/m_2(F)}
\]
which is at least $1$ by definition of $m_2(F)$.
\end{proof}

 \autoref{prop:FtimesFstar density} concern the density of graphs in the class $\mathcal F^k$. Every graph $F^*\in\mathcal F^k$ (for $k\geq 2$) can be constructed by starting with a copy of $F$ and repeatingly attaching copies of $(e,F)$ to some edge. Since $F$ is $2$-balanced one may expect that graphs constructed by this procedue will be also be $2$-balanced. The following lemma establishes that this is indeed the case.
\begin{lemma}\label{lemma:2-balanced building}
Suppose that $G$ and $H$ are two $2$-balanced graphs such that $G\cap H$ is a single edge. If $G$ and $H$ both have $2$-density $d$ then $G\cup H$ is also $2$-balanced with density $d$.

Similarly if $(R,G)$ and $(R,H)$ are balanced rooted graphs of density $d$ with $V(G)\cap V(H) = R$ then $(R,G\cup H)$ is balanced with density $d$.
\end{lemma}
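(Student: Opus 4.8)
The plan is to prove both halves simultaneously by reducing to a single inequality about densities of subgraphs, using the shared-edge (resp.\ shared-root) structure to split an arbitrary subgraph of $G\cup H$ into its $G$-part and its $H$-part. For the first statement, write $d$ for the common $2$-density of $G$ and $H$, let $e_0$ denote the shared edge, and set $q=n^{-1/d}$. The standard reformulation of "$G\cup H$ is $2$-balanced with density $d$" is: for every subgraph $J\subseteq G\cup H$ with $v_J\geq 2$ (and $e_J\geq 1$) we have $n^{v_J-2}q^{e_J-1}\geq 1$, with equality only for $J=G\cup H$ itself. First I would fix such a $J$ and set $J_G=J\cap G$, $J_H=J\cap H$, so that $J=J_G\cup J_H$ and $J_G\cap J_H\subseteq e_0$ (it is either empty, a single vertex, or the edge $e_0$).

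The key computation is then an inclusion–exclusion on the exponents. If $J_G$ and $J_H$ share the full edge $e_0$, then $v_J=v_{J_G}+v_{J_H}-2$ and $e_J=e_{J_G}+e_{J_H}-1$, so
\[
n^{v_J-2}q^{e_J-1}=\bigl(n^{v_{J_G}-2}q^{e_{J_G}-1}\bigr)\bigl(n^{v_{J_H}-2}q^{e_{J_H}-1}\bigr),
\]
and each factor is $\geq 1$ by the $2$-balancedness of $G$ and of $H$ at density $d$; this gives the bound, and tracks the equality case since a product of quantities $\geq 1$ equals $1$ only if both factors are $1$, forcing $J_G=G$ and $J_H=H$. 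The remaining cases — $J_G,J_H$ sharing one vertex or nothing, or one of them being empty — are handled by the same bookkeeping: when they share at most one vertex, I would instead compare against $m(G)$ and $m(H)$ (note $e_0\notin E(J)$ in that case, so $J$ genuinely decomposes into pieces living strictly inside $G$ and inside $H$), and use that $d_1$- or $d$-type densities of subgraphs are controlled because a $2$-balanced graph on $\geq 3$ vertices is also $1$-balanced (as remarked after the density definitions in the preliminaries), together with $m_1(G)\le m_2(G)=d$. The worst case to watch is when $J_G$ or $J_H$ has only one or two vertices, where the "$-2$" in the exponent behaves degenerately; here one checks the inequality directly.

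The second (rooted) statement is proved the same way with $R$ playing the role of $e_0$: for $J\subseteq G\cup H$ put $R'=R\cap V(J)$, $J_G=J\cap G$, $J_H=J\cap H$, and note $\overline v_{R',J}=\overline v_{R'\cap V(J_G),J_G}+\overline v_{R'\cap V(J_H),J_H}$ and $\overline e_{R',J}=\overline e_{J_G}+\overline e_{J_H}$ exactly, because $V(G)\cap V(H)=R$ means no edges or non-root vertices are double-counted. Hence $n^{\overline v_J}q^{\overline e_J}=(n^{\overline v_{J_G}}q^{\overline e_{J_G}})(n^{\overline v_{J_H}}q^{\overline e_{J_H}})$ with each factor $\ge 1$ by balancedness of $(R,G)$ and $(R,H)$ at density $d$, and we are done; the rooted version is actually cleaner since there is no "$-2$" degeneracy to worry about. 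I expect the main obstacle to be the careful case analysis in the unrooted statement when the intersection $J_G\cap J_H$ is smaller than the full edge $e_0$ — there one must avoid a spurious term and fall back on $1$-balancedness rather than $2$-balancedness — but this is routine once the decomposition is set up correctly.
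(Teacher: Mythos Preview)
Your proposal is correct and follows essentially the same approach as the paper: reformulate balancedness via $n^{v_J-2}q^{e_J-1}\ge 1$ with $q=n^{-1/d}$, split an arbitrary subgraph $J$ into $J\cap G$ and $J\cap H$, and factor according to the size of their intersection. The paper's write-up is marginally more compact---it uses a single asymmetric factorization $n^{v_{G'}-2}p^{e_{G'}-1}\cdot n^{v_{H'}-v_{H'\cap G'}}p^{e_{H'}-e_{H'\cap G'}}$ together with $d=m_2(H)\ge m_1(H)\ge m(H)$ to handle all three intersection sizes at once---but the underlying argument is identical, and your explicit case analysis works just as well (note that only ``balanced'', not ``strictly balanced'', is claimed, so you need not track the equality case).
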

\begin{proof}
Let $p=n^{-1/d}$ and pick an induced subgraph $F\subseteq G\cup H$ with $e_{F}\geq 1$. Write $G'=F[V(G)]$ and $H'=F[V(H)]$. Without loss of generality we have $e_{G'}\geq 1$ and
\[
	n^{v_{F}-2}p^{e_{F}-1} = n^{v_{G'}-2}p^{e_{G'}-1} n^{v_{H'}-v_{H'\cap G'}}p^{e_{H'}-e_{H'\cap G'}} \geq 1,
\]
since $d=m_2(H)\geq m_1(H) \geq m(H)$ and since $H'\cap G'$ is either an edge, a vertex or empty. Thus $m_2(G\cup H) \leq d$. Furthermore \[
n^{v_{G\cup H}-2}p^{e_{G\cup H}-1} = n^{v_{G}-2}p^{e_G-1}n^{v_H-2}p^{e_H-1} = 1 \cdot 1,
\]
which implies $m_2(G\cup H)=d$ and that $G\cup H$ is balanced with respect to the $2$-density.

The second claim can be proved in a similar fashion.
\end{proof}
Thus every $F^*\in\mathcal F^k$, where $k\geq 2$, is $2$-balanced with $2$-density $m_2(F)$. Similarly $F\times (e,F^*)$ is $2$-balanced (and thus balanced)
\FtimesFstardensity*
\begin{proof}
Fix $F^*\in\mathcal F^r$, where $r\geq2$ . As noted above $F\times(e,F^*)$ is balanced. Therefore it suffices to check that
\begin{align*}
n^{v_{F\times(e,F^*)}}n^{-e_{F\times(e,F^*)}/\overline m_2^r(F)} 
&=n^{v_F}\left(n^{v_{F^*}-2}n^{-e_{F^*}/\overline m_2^r(F)}\right)^{e_F}  \\
&\overset{\mathclap{(\ref{lemma:f star density})}}{\geq}  n^{v_F} n^{-e_F/\overline m^1_2(F)} \geq 1.
\end{align*}
\end{proof}

It remains to prove \autoref{lemma:rooted density}. To do so we require two more auxiliary lemmas.
\begin{lemma}\label{lemma:prod density}
Let $(R,G),(e,H)$ be rooted graphs. Suppose that $(e,H)$ is balanced and that for some $t>0$
\begin{align*}
 m_1(H-e) &\leq t, \\
\overline v_H-\frac{\overline e_H}{t} &\geq  -\frac{1}{m(R,G)}.
\end{align*}
Then \[m\left(R,\left(R,G\right)\times \left(e,H-e\right)\right)\leq t.\]
\end{lemma}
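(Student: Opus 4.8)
The plan is to unfold the definitions and verify the rooted-density bound subgraph by subgraph. Write $\Gamma=(R,G)\times(e,H-e)$. By construction $\Gamma$ consists of a copy of $G$ with all of its \emph{non-root} edges deleted --- call those the \emph{core} vertices --- together with, for each non-root edge $e'=uv$ of $G$, a vertex-disjoint \emph{gadget} $H_{e'}\cong H-e$ rooted at $u$ and $v$; since the edge between the two roots of $H-e$ has been removed, every edge of a gadget is incident to a non-root gadget vertex. Fix a subgraph $K\subseteq\Gamma$; we must show $\overline e_K\le t\,\overline v_K$, the roots being $R\cap V(K)$. Put $W=V(K)\cap V(G)$, $R'=R\cap W$, and for each non-root edge $e'$ of $G$ let $A_{e'}$ (resp.\ $B_{e'}$) be the set of non-root vertices (resp.\ edges) of $H_{e'}$ lying in $K$. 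Because the core--core edges of $\Gamma$ are exactly the root--root edges of $G$, and gadget edges never join two roots, one gets the clean identities
\[
\overline v_K=(|W|-|R'|)+\sum_{e'}|A_{e'}|,\qquad
\overline e_K=\sum_{e'}|B_{e'}|,
\]
so it suffices to prove $(|W|-|R'|)+\sum_{e'}\bigl(|A_{e'}|-|B_{e'}|/t\bigr)\ge 0$. A gadget with $A_{e'}=\emptyset$ forces $B_{e'}=\emptyset$ and contributes $0$, so only \emph{used} gadgets ($A_{e'}\neq\emptyset$) matter, and we may discard the degenerate cases $v_H=2$ or $\overline e_H=0$, where $\Gamma$ has no edge outside $G[R]$ at all and the claim is immediate.

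Next I bound the contribution of each used gadget $e'=uv$ according to how many of $u,v$ lie in $W$. If at most one of them does, then $J_{e'}:=K\cap H_{e'}$, viewed inside $H-e$, has at most $1+|A_{e'}|$ vertices, so $m_1(H-e)\le t$ gives $|B_{e'}|\le t\,(1+|A_{e'}|-1)=t\,|A_{e'}|$, hence $|A_{e'}|-|B_{e'}|/t\ge 0$. If both $u,v\in W$, then $J_{e'}$ contains both roots of $H-e$; restoring the root edge (it is an edge of $H$, and if $e\notin E(H)$ then $H-e=H$ and nothing need be restored) exhibits a subgraph $J'\subseteq H$ rooted at the two roots of $(e,H)$ with $\overline e_{J'}=|B_{e'}|$ and $\overline v_{J'}=|A_{e'}|$, so the balancedness of $(e,H)$ yields $|B_{e'}|/|A_{e'}|=d(e,J')\le d(e,H)=\overline e_H/\overline v_H$. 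Consequently
\[
|A_{e'}|-\frac{|B_{e'}|}{t}\ \ge\ \frac{|A_{e'}|}{\overline v_H}\Bigl(\overline v_H-\frac{\overline e_H}{t}\Bigr)\ \ge\ \min\Bigl\{0,\ \overline v_H-\frac{\overline e_H}{t}\Bigr\}\ \ge\ -\frac{1}{m(R,G)},
\]
where the middle step uses $1\le|A_{e'}|\le\overline v_H$, so $|A_{e'}|/\overline v_H\in(0,1]$ can only move a non-positive quantity closer to $0$, and the last step is the second hypothesis.

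Finally I combine the per-gadget estimates by charging the negative contributions against the core of $G$. Let $E_{\mathrm{full}}$ be the set of used non-root edges of $G$ both of whose endpoints lie in $W$. The subgraph $G_{\mathrm{core}}\subseteq G$ with vertex set $W$ and edge set $E_{\mathrm{full}}\cup E(G[R'])$ has roots $R'$, satisfies $\overline v_{G_{\mathrm{core}}}=|W|-|R'|$ and $\overline e_{G_{\mathrm{core}}}=|E_{\mathrm{full}}|$, hence $|E_{\mathrm{full}}|\le m(R,G)\,(|W|-|R'|)$. Plugging in the two bounds above (used gadgets with at most one endpoint in $W$ contribute $\ge 0$, while the $|E_{\mathrm{full}}|$ used gadgets with both endpoints in $W$ contribute $\ge -1/m(R,G)$ each) gives
\[
(|W|-|R'|)+\sum_{e'\ \mathrm{used}}\Bigl(|A_{e'}|-\tfrac{|B_{e'}|}{t}\Bigr)\ \ge\ (|W|-|R'|)-\frac{|E_{\mathrm{full}}|}{m(R,G)}\ \ge\ 0,
\]
i.e.\ $\overline e_K\le t\,\overline v_K$; as $K$ was arbitrary, $m(R,\Gamma)\le t$. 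The place that needs care is precisely this split: only used gadgets whose \emph{both} endpoints land in the core may be charged through $G_{\mathrm{core}}$, so one must verify that partially-used and ``floating'' gadgets contribute non-negatively on their own --- which is exactly what $m_1(H-e)\le t$ buys. I expect this bookkeeping, together with the correct handling of the isomorphism ``$J'\subseteq H$'' when $e\notin E(H)$, rather than any single inequality, to be the main point to get right.
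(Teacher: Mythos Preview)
Your proof is correct and follows essentially the same route as the paper's: decompose an arbitrary subgraph into its core (intersection with $G$) and the gadget pieces, bound gadgets meeting at most one core vertex via $m_1(H-e)\le t$, bound gadgets meeting two core vertices via the balancedness of $(e,H)$, and then charge the latter against the core using $m(R,G)$. The only cosmetic differences are that the paper writes everything multiplicatively (working with $n^{\overline v}p^{\overline e}$ for $p=n^{-1/t}$) and does an explicit global case split on whether $t\ge m(e,H)$, whereas you work additively and absorb that split into your $\min\{0,\cdot\}$ step; your bookkeeping with $E_{\mathrm{full}}$ is if anything a bit more explicit than the paper's.
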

\begin{proof}
Let $p=n^{-1/t}$ and let $(R,F)=(R,G)\times \left(e,H-e\right)$. It suffices to show that
\begin{equation}\label{eq:min prod}
\min_{(R,F')\subseteq (R,F)} n^{\overline v_{F'}-\abs R} p^{\overline e_{F'}} \geq 1.
\end{equation}

Fix a graph $(R,F')\subseteq(R,F)$ which attains the minimum.
Let $H_1,\dots,H_k \sim H-e$ denote the (canonical) copies of $H-e$ in $F$ and write $H'_i=F'\cap H_i$. The above term can be rewritten as
\[
n^{v_{F'\cap G}-\abs R} \prod_{i\in\set{k}} n^{v_{H'_i}- v_{H'_i\cap G}} p^{e_{H'_i}}.
\]

If for some $i$ we have $v_{H'_i\cap G}\in \left\{0,1\right\}$ then $t\geq m_1(H-e) \geq m(H-e)$ implies
\[
 n^{v_{H'_i}- v_{H'_i\cap G}} p^{e_{H'_i}} \geq 1.
\]
We may thus assume that for such $i$ we have $H'_i=H_i\cap G$.

  Otherwise $v_{H'_i\cap G}=2$. If $t\geq m(e,H)$ then the above bound holds as well and in particular (\ref{eq:min prod}) is satisfied. If $t<m(e,H)$ then, since $(e,H)$ is balanced, the minimum of $n^{v_{H'_i}-2}p^{e_{H'_i}}$ is attained for $H'_i=H_i$. Thus the minimization reduces to
\begin{align*}
\min_{(R,G')\subseteq (R,G)} n^{\overline v_{G'}} \left(n^{v_H-2}p^{e_H-1}\right)^{\overline e_{G'}}
&= \min_{(R,G')\subseteq (R,G)} n^{\overline v_{G'}} \left(n^{v_H-2-(e_H-1)/t}\right)^{\overline e_{G'}} \\
&\geq  \min_{(R,G')\subseteq (R,G)} n^{\overline v_{G'}} n^{-\overline e_{G'}/m(R,G)} = 1.
\end{align*}
\end{proof}

\begin{lemma}\label{lemma:forbidden density}
Let $r\geq 2$ and suppose that $G$ is a $2$-balanced graph with density at least $1$. Then
\[ 
	\max_{R\subsetneq V} m(R,G) \leq v-1 < \left(\frac{1}{m\left(G\right)}-\frac{1}{\overline m^r_2\left(G\right)}\right)^{-1}.
\]
\end{lemma}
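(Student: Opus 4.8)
The plan is to prove the two inequalities separately; neither is deep.

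For $\max_{R\subsetneq V}m(R,G)\le v-1$ (write $v=v_G$, $e=e_G$, $V=V(G)$) I would use a crude uniform edge count. Fix any $H\subseteq G$ and any $R\subsetneq V$; one may assume $R\cap V(H)\subsetneq V(H)$, as otherwise $\overline v_{R\cap V(H),H}=0$ and this $H$ contributes $0$ to $m(R,G)$ via the convention $0/0=0$. Let $k=\overline v_{R\cap V(H),H}\ge1$ be the number of vertices of $H$ outside $R$, listed as $u_1,\dots,u_k$. Each edge counted by $\overline e_{R\cap V(H),H}$ meets some $u_i$; assign it to the one of smallest index. The edges assigned to $u_i$ have the form $\{u_i,w\}$ with $w\in V(H)\setminus\{u_1,\dots,u_i\}$, so there are at most $v_H-i$ of them; hence $\overline e_{R\cap V(H),H}\le\sum_{i=1}^{k}(v_H-i)\le k(v_H-1)$ and $d(R\cap V(H),H)\le v_H-1\le v-1$. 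Maximising over $H$ gives $m(R,G)\le v-1$, uniformly over $R\subsetneq V$.

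For the right-hand inequality I would exploit that $G$ is $2$-balanced, hence --- by the remark in the preliminaries --- balanced with respect to $\overline m_2^{i}$ for every $i\ge1$. Setting $a_i=\overline m^i_2(G)$, this gives $a_1=m(G)=e/v$ and $a_i=e/(v-2+1/a_{i-1})$ for $i\ge2$, from which a short computation yields
\[
\frac1{m(G)}-\frac1{\overline m^r_2(G)}=\frac1{a_1}-\frac1{a_r}=\frac1e\Bigl(2-\frac1{a_{r-1}}\Bigr).
\]
Plugging in $a_{r-1}=\overline m^{r-1}_2(G)<m_2(G)=(e-1)/(v-2)$ --- a strict inequality from the density chain, valid since $G$ is $2$-balanced and (having density at least $1$) not a forest --- gives $1/a_{r-1}>(v-2)/(e-1)$ and therefore
\[
\frac1{m(G)}-\frac1{\overline m^r_2(G)}<\frac1e\cdot\frac{2e-v}{e-1}=\frac{2e-v}{e(e-1)}.
\]
I would finish by checking $\frac{2e-v}{e(e-1)}\le\frac1{v-1}$: after clearing the positive denominators this reads $(2e-v)(v-1)\le e(e-1)$, i.e.\ $0\le(e-v)^2+(e-v)$, which holds because $e\ge v$ (the density of $G$ is at least $1$). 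Hence $0<1/m(G)-1/\overline m^r_2(G)<1/(v-1)$ --- the left inequality because $\overline m^r_2(G)>m(G)$ --- and taking reciprocals of these positive quantities gives $v-1<\bigl(1/m(G)-1/\overline m^r_2(G)\bigr)^{-1}$. Together with the first part this is the claim.

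I do not expect a genuine obstacle. The one thing to get right is the strictness in the second part: it must come from $\overline m^{r-1}_2(G)<m_2(G)$, since the final arithmetic inequality $\frac{2e-v}{e(e-1)}\le\frac1{v-1}$ is an equality exactly when $e=v$ (for instance when $G$ is a cycle) and so cannot itself supply strictness.
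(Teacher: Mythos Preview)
Your argument is correct. Both inequalities go through as written; the strictness in the second part is placed correctly on the step $\overline m_2^{r-1}(G)<m_2(G)$ (equivalently $\overline m_2^{r}(G)<m_2(G)$), not on the final arithmetic inequality, which indeed degenerates to equality when $e=v$.

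For comparison with the paper: in the first inequality the paper observes that $m(R,G)$ is monotone under edge addition, reduces to $G=K_v$, and then computes $m(R,K_v)=\bigl(\binom{v}{2}-\binom{|R|}{2}\bigr)/(v-|R|)=(v+|R|-1)/2\le v-1$. Your direct edge-assignment count is a bit more hands-on but avoids any appeal to the balancedness of $(R,K_v)$; it also transparently shows that the bound $v_H-1$ holds subgraph by subgraph. For the second inequality the paper uses $\overline m_2^r(G)<m_2(G)$ directly to get $1/m(G)-1/\overline m_2^r(G)<v/e-(v-2)/(e-1)$, and this quantity equals your $(2e-v)/\bigl(e(e-1)\bigr)$; so although you route the computation through the recursion $a_i=e/(v-2+1/a_{i-1})$ and invoke $a_{r-1}<m_2(G)$ instead, you land on exactly the same bound. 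The paper then argues that $(2e-v)/(e(e-1))$, viewed as a function of $e\ge v$ with $v$ fixed, is maximised at $e=v$, while you verify the equivalent inequality $(e-v)^2+(e-v)\ge0$ algebraically. Both finishes are equally short; yours has the minor advantage of making the equality case $e=v$ visible without a derivative computation.
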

\begin{proof}
$m(R,G)$ is monotone increasing under edge addition. Thus for the first inequality it suffices to consider the case $G=K_v$. We have
\[
	m(R,K_v)= \frac{\binom{v}{2}-\binom{\abs R}{2}}{v-\abs R} \leq \frac{\binom{v}{2}-\binom{v-1}{2}}{v-(v-1)} = v-1.
\]

For the second inequality we use $\overline m^r_2\left(G\right) < m_2(G)$ and the fact that since $G$ is $2$-balanced it is also balanced to obtain
\begin{equation}
\label{eq:forbidden max}
	\frac{1}{m\left(G\right)}-\frac{1}{\overline m^r_2\left(G\right)}
	< \frac{1}{m\left(G\right)}-\frac{1}{m_2\left(G\right)}
	= \frac{v}{e}-\frac{v-2}{e-1}.
\end{equation}
Maximizing (\ref{eq:forbidden max})  subject to $v \leq e$ we see that the maximum is attained whenever $v=e$. Thus the above is at most
\[
	1 - \frac{v-2}{v-1} = \frac{1}{v-1}.
\]
\end{proof}

\rooteddensity*
\begin{proof}
Let $F^*\in \mathcal F^k$. We have
\[
n^{v_{F^*}-1}n^{-e_{F^*}/\overline m_2^k(F)} \overset{\autoref{lemma:f star density}}{\geq} n^{1-1/\overline m^1_2(F)} \geq 1
\]
and thus $d_1(F^*) \leq \overline m^k_2(F)$. $F^*$ is $2$-balanced and thus strictly $1$-balanced. Therefore we obtain the inequality
\[
m_1(F^*_-) < m_1(F^*) = d_1(F^*) \leq \overline m^k_2(F),
\]
which proves the first part of \autoref{lemma:rooted density}.

For the second part we want to apply \autoref{lemma:prod density} with $(R,G)\leftarrow (V_0,F)$, $(e,H)\leftarrow (e,F^*)$ and $t\leftarrow \overline m^{k}_2(F)-\eps$, where $\eps > 0$ is a small constant such that $m_1(F^*_-)\leq  \overline m_2^k-\eps$. Since $F^*$ is $2$-balanced $(e,F^*)$ is also balanced. We have choosen $\eps$ such that $m_1(F^*_-) \leq t$. The final premise of \autoref{lemma:prod density} is established by
\[
v_{F^*}-2 - \frac{e_{F^*}-1}{\overline m^{k}_2(F)}
\overset{\autoref{lemma:f star density}}{\geq} -\frac{1}{\overline m^{1}_2(F)}+\frac{1}{\overline m^{k}_2(F)}
\overset{\autoref{lemma:forbidden density}}{>} -\frac{1}{m(V_0, G)}.
\]
Thus we can apply \autoref{lemma:prod density} which proves the last property.

\end{proof}


\section{Proof of Theorem~\ref{theorem:lreg extension klr}}\label{section:klr proof}

The proof of the theorem follows the proof of the K\L R-conjecture by Saxton, Thomason in \cite{SaxtonThomason:container} and relies on their container theorem:
\begin{definition}
Let $G$ be an $r$-graph of order $n$ and average degree $d$. Let $\tau>0$. Given $v\in V(G)$ and $2\leq j \leq r$, let 
\[
	d^{(j)}(v) = \max \left \{ d(\sigma) : v\in \sigma \subset V(G), \abs \sigma = j \right\}.
\]
If $d > 0$ we define $\delta_j$ by the equation
\[
	\delta_j \tau^{j-1}nd=\sum_v d^{(j)}(v).
\]
Then the co-degree function $\delta(G,\tau)$ is defined by
\[
	\delta(G,\tau) = 2^{\binom{r}{2}-1}\sum_{j=2}^r \delta_j 2^{-\binom{j-1}{2}}.
\]
If $d=0$ we define $\delta(G,\tau)=0$.
\end{definition}

\begin{theorem}[\cite{SaxtonThomason:container}, Corollary 3.6]
\label{theorem:container}
Let $\mathcal E$ be an $r$-graph on the vertex set $\set{n}$. Let $0<\eps,\tau<1/2$. Suppose that $\tau$ satisfies $\delta(\mathcal E,\tau) \leq \eps/12r!$. Then there exists a constant $c=c(r)$, and a function $C\colon \mathcal P\left([n]\right)^s \to \mathcal P[n]$ where $s\leq c \log(1/\eps)$, with the following properties. Let
$\mathcal T= \left\{\left(T_1,\dots,T_S\right)\in\mathcal P\left(\set{n}\right)^s\colon \abs {T_i} \leq c\tau n\right\}$, and let $\mathcal C = \left\{C(T)\colon T\in\mathcal T\right\}$. Then
\begin{enumerate}
\item for every $I\subset\set{n}$ for which $e(\mathcal E[I])\leq \eps \tau^r e(\mathcal E)$ there exists $T=\left(T_1,\dots,T_s\right)\in \mathcal T \cap P\left( I\right)^s$ with $I\subset C(T)$,
\item $e(\mathcal E[C])\leq \eps e(\mathcal E)$ for all $C\in\mathcal C$.
\end{enumerate}
\end{theorem}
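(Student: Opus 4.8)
The plan is to deduce \autoref{theorem:container} from a \emph{one-step container lemma} by iterating it $O(\log(1/\eps))$ times. The one-step lemma is where all the work happens; informally it says that if $G$ is an $r$-graph with $\delta(G,\tau)$ small, then there is a deterministic map $f$ sending each ``fingerprint'' $S\subseteq V(G)$ with $|S|\le c'\tau|V(G)|$ to a set $f(S)\subseteq V(G)$, such that every $I$ spanning few enough edges of $G$ admits a fingerprint $S\subseteq I$ with $I\subseteq f(S)$ and $e(G[f(S)])\le\tfrac12 e(G)$ (any constant factor $<1$ depending only on $r$ would do). Granting this, \autoref{theorem:container} is immediate: apply the one-step lemma to $\mathcal E$, obtaining $T_1$ and $C(T_1)\supseteq I$ with $e(\mathcal E[C(T_1)])\le\tfrac12 e(\mathcal E)$; apply it again to the hypergraph induced on $C(T_1)$, and so on. After $s=\lceil\log_2(1/\eps)\rceil+O(1)\le c\log(1/\eps)$ rounds the accumulated container $C=C(T_1,\dots,T_s)$ satisfies $e(\mathcal E[C])\le\eps e(\mathcal E)$, which is part~(2), while part~(1) holds because every $T_i$ is chosen inside $I$ and the containers are nested, so $\bigcup_i T_i\subseteq I$ and $I\subseteq C$.

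To prove the one-step lemma I would run a greedy max-degree algorithm. Fix a total order on $V(G)$ refining decreasing degree; maintain a current container $C$, initially $V(G)$, and a fingerprint $S$, initially empty. Scan the vertices in order; when a vertex $v\in C$ is reached whose degree inside $G[C]$ is still large (larger than a threshold of order $\tau^{r-1}$ times the current average degree) \emph{and} $v$ lies in $I$, add $v$ to $S$ and delete from $C$ the corresponding ``link'' vertices — roughly, those $w$ that together with subsets of the already-chosen fingerprint lie in a positive fraction of the $2$-edges, $3$-edges, \dots, $r$-edges through them. One must check two things. First, the algorithm selects at most $c'\tau|V(G)|$ vertices into $S$: since $I$ spans few edges of $G$, if $S$ grew beyond this size then $G[C]$ would have been forced to shed more edges than it actually contains. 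Second, the induced edge count drops by a constant factor: summing the sizes of the deleted link sets against the $r$ ``layers'' of co-degrees $d^{(2)},\dots,d^{(r)}$ and using the hypothesis $\delta(G,\tau)\le\eps/12r!$ shows that at least half the edges of $G$ get stripped away. The conceptual key — the \emph{scytale}/replay observation — is that once one fixes which vertices end up in $S$, the entire run of the algorithm, and in particular the final $C$, is determined: $I$ only ever supplies the answers ``yes, take $v$'' at the selection steps, so $C=f(S)$ depends on the fingerprint alone.

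Two points of bookkeeping need attention in the iteration. (a) The co-degree hypothesis must be available at every round, not just the first; the cleanest way is to phrase all $s$ applications as one long algorithm run against $\mathcal E$ itself, harvesting the fingerprint in $s$ consecutive blocks $T_1,\dots,T_s$ and defining $C(T_1,\dots,T_s)$ as the final container, so that only $\delta(\mathcal E,\tau)\le\eps/12r!$ is ever invoked. (b) The hypothesis of part~(1) is ``$e(\mathcal E[I])\le\eps\tau^r e(\mathcal E)$'' rather than ``$I$ independent''; this slack is exactly what the first check above needs in order to go through when $I$ is only approximately independent, and it is why the threshold carries the factor $\eps\tau^r$.

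The main obstacle is the second check in the one-step lemma — the quantitative claim that the greedily deleted link sets remove a constant fraction of the edges — together with the careful accounting of the multi-layer degree data that underlies it; this is precisely where the constant $12r!$ and the weighting $2^{\binom r2-1}\sum_{j=2}^r\delta_j 2^{-\binom{j-1}2}$ built into the definition of $\delta(G,\tau)$ are forced. By comparison, the scytale observation is the cleverest single idea but, once the algorithm is set up, technically short, and the iteration is routine.
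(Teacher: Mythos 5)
\autoref{theorem:container} is not proved in this paper at all: it is quoted verbatim from Saxton and Thomason (their Corollary 3.6) and used as a black box, so there is no ``paper's own proof'' to compare against. Judged against the actual proof in the cited source, your outline does identify the correct architecture --- an iterated one-step container lemma, a greedy max-degree selection algorithm, the replay (``scytale'') observation that the final container is a function of the fingerprint alone, and the role of the weighted co-degree function $\delta(\mathcal E,\tau)$ in certifying that each round strips a constant fraction of the edges. Your bookkeeping remarks (a) and (b) also touch the two genuine subtleties of the derivation: that the degree-measure hypothesis must be available in every round of the iteration, and that the slack $\epsilon\tau^r e(\mathcal E)$ in part (1) is what allows $I$ to be merely almost independent rather than independent.

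That said, as a proof this is a skeleton, not an argument. The entire mathematical content of the theorem lives in the one-step lemma, which you state but do not prove: you do not specify the deletion rule for the ``link'' sets, the degree thresholds, or the accounting that shows (i) the fingerprint has size $O(\tau n)$ and (ii) at least a constant fraction of edges is removed --- and it is exactly this accounting that forces the peculiar weights $2^{\binom{r}{2}-1}\sum_j \delta_j 2^{-\binom{j-1}{2}}$ and the constant $12r!$. Your proposed fix for (a), running one long algorithm against $\mathcal E$ and harvesting the fingerprint in blocks, is plausible but is asserted rather than verified; in Saxton--Thomason this is handled by a carefully stated single-round theorem whose hypotheses are shown to persist under restriction. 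So the proposal is a faithful road map to the known proof, but it does not constitute an independent proof, and in the context of this paper none is needed: the correct ``proof'' here is the citation.
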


For a graph $F$ we denote with $K_{F,n}$ the $v_F$-partite graph with vertex partitions $V_1,\dots,V_{v_F}$ of size $n$, such that $K_{F,n}[V_i,V_j]$ is complete if $\left\{i,j\right\}\in E(F)$ and empty otherwise. 

For a rooted graph $(R,F)$ and $G_R\in \mathcal R(R,n)$ we denote with $\mathcal E\left(G_R,F\right)$ the hypergraph whose vertices are the edges of $K_{F_-,n}$ and whose edges form (when seen as subgraphs of $K_{F_-,n}$) a partite copy of $F_-$ whose roots induce an edge in $G_R$.

To proof \autoref{theorem:lreg extension klr} we will apply \autoref{theorem:container} to $\mathcal E(G_R,F)$. The first step is to obtain a bound on the co-degree function.
\begin{lemma}\label{lemma:delta(E,gamma) bound}
Let $\left(R,F\right)$ be a rooted graph with $e_{F_-}>1$. Let $0<\gamma, q(n) \leq 1 \leq A$.

Then for $n$ sufficiently large every hypergraph $G_R\in\mathcal R(R,n)$ which is \ue{F}{Aq} satisfies
\[\delta\left(\mathcal E\left(G_R,F\right),\gamma^{-1}n^{-1/m_2\left(R,F,-\log_n\left(q\right)\right)}\right)\leq \gamma e_{F_-}2^{e_{F_-}^2}\frac{n^{\abs R}\left(Aq\right)^{e_{F[R]}}}{\abs{E(G_R)}}.\]
\end{lemma}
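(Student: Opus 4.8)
The goal is to bound the co-degree function $\delta(\mathcal E, \tau)$ where $\mathcal E = \mathcal E(G_R, F)$ and $\tau = \gamma^{-1} n^{-1/m_2(R,F,-\log_n q)}$. Recall that $\delta(\mathcal E,\tau) = 2^{\binom{r'}{2}-1}\sum_{j=2}^{r'} \delta_j 2^{-\binom{j-1}{2}}$ where $r' = e_{F_-}$ is the uniformity of $\mathcal E$, and each $\delta_j$ is defined via $\delta_j \tau^{j-1} n' d = \sum_{v} d^{(j)}(v)$, with $n'$ the number of vertices of $\mathcal E$ (which is $e_{K_{F_-,n}} = \Theta(n^2)$, or more precisely $\sum_{\{i,j\}\in E(F_-)} n^2$), and $d$ the average degree of $\mathcal E$. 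First I would compute $n' d = r' \cdot e(\mathcal E)$ and observe that $e(\mathcal E) \asymp$ (number of partite copies of $F_-$ in $K_{F_-,n}$ whose roots form an edge of $G_R$) $\leq \abs{E(G_R)} \cdot n^{v_{F_-} - \abs R}$ by the trivial count, but what actually matters is that $e(\mathcal E)$ appears in the denominator of the claimed bound via $\abs{E(G_R)}$ — so I need a lower bound on $e(\mathcal E)$ in terms of $\abs{E(G_R)}$, which the theorem statement's hypotheses should furnish (each edge of $G_R$ extends to roughly $n^{v_{F_-}-\abs R} q^{\text{something}}$... actually no: the relevant fact is just that $e(\mathcal E) \geq \abs{E(G_R)}$ trivially is false; rather I will carry $e(\mathcal E)$ symbolically and only at the end divide through, matching the target's $\abs{E(G_R)}$ against $e(\mathcal E)/(\text{extension count})$).

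**Key steps.** The heart of the argument is bounding $\delta_j$, i.e. the quantity $\sum_v d^{(j)}(v)$. For a fixed $j$ with $2 \leq j \leq e_{F_-}$, and a vertex $v$ of $\mathcal E$ (an edge of $K_{F_-,n}$), $d^{(j)}(v)$ is the maximum, over $j$-subsets $\sigma$ of $V(\mathcal E)$ containing $v$, of the codegree $d(\sigma)$ = the number of edges of $\mathcal E$ containing $\sigma$. A set $\sigma$ of $j$ edges of $K_{F_-,n}$ that lies in a common edge of $\mathcal E$ spans some subgraph $J \subseteq F_-$ with $e_J = j$ (well, $j$ edges but possibly fewer if edges coincide — but as a subset they're distinct, so $e_J = j$ and $J$ has at most $2j$ vertices, some possibly roots). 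The codegree $d(\sigma)$ is then the number of ways to extend $J$ to a partite copy of $F_-$ with roots in $G_R$: this splits into (a) the number of ways to complete the non-root part, bounded by $n^{v_{F_-} - v_J}$ up to constants, times (b) if $J$ does not already pin down all $\abs R$ roots, the number of edges of $G_R$ compatible with the pinned roots, which by \ue{F}{Aq}-ness is at most $(Aq)^{e_{F[R]} - e_{F[R']}} n^{\abs R - \abs{R'}}$ where $R'$ are the pinned roots. Summing $d^{(j)}(v)$ over all $\Theta(n^2)$ choices of $v$, and over the bounded number of isomorphism types of $(J, \sigma)$, I get $\sum_v d^{(j)}(v) \leq (\text{const}) \cdot \max_J n^{v_{F_-} - v_J + (\text{root slack})} (Aq)^{(\text{root edge slack})} \cdot n^2$, where the max is over $J \subseteq F_-$ with $e_J = j$ (the extra $n^2$ absorbs the sum over $v$ after noting $v$ determines $2$ of the $v_J$ vertices — so really it's $n^{v_{F_-} - v_J + 2 + \text{slack}}$; care with whether $v$'s endpoints are among the roots). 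Then $\delta_j = \frac{\sum_v d^{(j)}(v)}{\tau^{j-1} n' d} = \frac{\sum_v d^{(j)}(v)}{\tau^{j-1} e_{F_-} e(\mathcal E)}$, and I plug in $\tau^{j-1} = \gamma^{-(j-1)} n^{-(j-1)/m_2(R,F,-\log_n q)}$. The crucial inequality is that the definition of $m_2(R,F,t)$ with $t = -\log_n q$ is exactly engineered so that $n^{-(j-1)/m_2(R,F,t)} \cdot (\text{denominator from } \mathcal E)$ dominates the numerator contribution from the worst $J$ — this is the content of "the motivation is given in \autoref{section:klr statement}". Unwinding: $m_2(R,F,t) \geq d_2(R \cap V(J), J, t) = \frac{\overline e_J - 1}{v_J - 2 - t\, e_{J[R]}}$ for every relevant $J$, which rearranges precisely to $n^{-(\overline e_J - 1)/m_2(R,F,t)} \geq n^{-(v_J - 2)} n^{t\, e_{J[R]}} = n^{-(v_J-2)} q^{-e_{J[R]}}$ (using $n^t = q^{-1}$), and this is the bound that makes each $\delta_j$ come out to at most $\gamma \cdot 2^{e_{F_-}^2}$ times $\frac{n^{\abs R}(Aq)^{e_{F[R]}}}{\abs{E(G_R)}}$ after accounting for $e(\mathcal E) \gtrsim \abs{E(G_R)} n^{v_{F_-}-\abs R} q^{\text{root edges?}}$... — actually I suspect $e(\mathcal E)$ only needs the lower bound $e(\mathcal E) \geq \abs{E(G_R)}$ is too weak; more likely one uses that $G_R$ is \lreg, hence $e(\mathcal E) \gtrsim \abs{E(G_R)} \cdot (\text{extension count of }F_- \text{ over an edge})$, but since the final bound has $\abs{E(G_R)}$ in the denominator and not $e(\mathcal E)$, I'll express everything so the extension factors cancel and only the genuinely combinatorial slack (the $\gamma$ from $\tau$, the $2^{e_{F_-}^2}$ from counting subgraph types, the $e_{F_-}$ from the number of $j$'s) survives.

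**Main obstacle.** The hard part will be the bookkeeping in step two: correctly tracking which vertices of the shared subgraph $J$ are root vertices versus free vertices, and correspondingly splitting $d^{(j)}(v)$ into a "free extension" factor $n^{v_{F_-} - v_J}$ and a "root completion" factor governed by \ue{F}{Aq}-ness, all while the sum over $v$ contributes the leading $n^2$ and the factor $(Aq)^{e_{F[R]}}$ in the target must emerge from the interplay between $G_R$'s upper-extensibility and the fact that $e(\mathcal E)$ itself carries a factor roughly $\abs{E(G_R)} n^{v_{F_-}-\abs R}$. Getting the exponent of $Aq$ to come out as exactly $e_{F[R]}$ (rather than something type-dependent) is where one must use that the worst case for a codegree is when the shared subgraph $\sigma$ pins down zero roots, so the full factor $(Aq)^{e_{F[R]}}$ appears; and matching this against the definition of $m_2(R,F,t)$, which is a maximum over subgraphs $H$ with $\overline e_H - e_{H[R]} > 1$, requires checking that the constraint $e_H - e_{H[R]} > 1$ in that max corresponds to $j \geq 2$ here (we need at least two non-root edges in $\sigma$ for a nontrivial codegree bound, since $j$-subsets with $j \geq 2$ and $\mathcal E$ being $e_{F_-}$-uniform means $e_{F_-} > 1$ is assumed). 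Once the exponents are verified, the constants $\gamma$, $2^{e_{F_-}^2}$, $e_{F_-}$ fall out by summing the geometric-type bounds over $2 \leq j \leq e_{F_-}$ and over the at-most-$2^{e_{F_-}^2}$ isomorphism types of subgraphs of $F_-$ on a labeled vertex set, and the $2^{\binom{r'}{2}-1} 2^{-\binom{j-1}{2}}$ weights in $\delta(\mathcal E,\tau)$ are absorbed into the $2^{e_{F_-}^2}$.
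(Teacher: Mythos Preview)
Your approach is essentially the paper's, and the key inequality you identify --- using $m_2(R,F,t) \geq d_2(R\cap V(F_j), F_j, t)$ to bound $\tau^{-(j-1)}$ --- is exactly the heart of the argument. However, you have tangled yourself up over something simple: the count $e(\mathcal E)$. Since the vertex set of $\mathcal E$ is $E(K_{F_-,n})$ and a hyperedge of $\mathcal E$ is a partite copy of $F_-$ in the \emph{complete} blow-up whose root-tuple lies in $G_R$, each edge of $G_R$ extends in exactly $n^{v_F-\abs R}$ ways (choose the non-root vertices freely). Hence $e(\mathcal E) = \abs{E(G_R)}\, n^{v_F - \abs R}$ on the nose --- no lower-regularity is needed, and indeed lower-regularity is not a hypothesis of this lemma. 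This equality is what makes the denominator $\tau^{j-1} e_{F_-} e(\mathcal E)$ produce exactly the factor $\abs{E(G_R)}$ in the target bound, with $n^{v_F-\abs R}$ cancelling against the extension count in the numerator.

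Two smaller cleanups. First, the constant $2^{e_{F_-}^2}$ does not come from enumerating isomorphism types of subgraphs; it is just a crude bound on the weight $2^{\binom{e_{F_-}}{2}-1}$ in the definition of $\delta(\mathcal E,\tau)$, together with the $e_{F_-}-1$ terms in the sum over $j$. You do not sum over subgraph types at all: one simply bounds $\sum_e d^{(j)}(e) \leq e_{F_-} n^2 \cdot \max_e d^{(j)}(e)$. Second, the exponent of $q$ comes out as exactly $e_{F[R]}$ because the factor $q^{-e_{F_j[R]}}$ in the upper-extensibility bound cancels against the factor $q^{e_{F_j[R]}}$ coming from $\tau^{-(j-1)}$; the exponent of $A$ is only $e_{F[R]} - e_{F_j[R]}$, but $A \geq 1$ lets you round it up to $e_{F[R]}$. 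Your worry about whether $v$'s endpoints are roots is a non-issue once you use the crude $\sum_e \leq e_{F_-} n^2 \cdot \max_e$ bound.
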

\begin{proof}
Let $\sigma$ denote a set of vertices of $\mathcal E=\mathcal E(G_R, F)$. We identify $\sigma$ with the set of edges from $K_{F_-,n}$ which it represents. If the degree of $\sigma$ is non zero this set of edges is a graph $F'\subset K_{F_-,n}$ which is isomorphic to some subgraph of $F_-$. The degree of $F'$ is the number of ways we can extend $F'$ to a partite copy of $F_-$ in $K_{F_-,n}$ whose roots form an edge in $G_R$.

Since $G$ is \ue{F}{Aq} we have 
\[
	d\left(F'\right)\leq n^{v_F-v_{F'}}\left(Aq\right)^{e_{F[R]}-e_{F[R\cap V(F')]}}  .
\]
For $j\geq 2$ and an edge $e\in E\left(K_{F_-,n}\right)$ the quantity $d^{(j)}\left(e\right)$ is the maximum of $d\left(F'\right)$ over all $F'$ with $e\in F'$ and $\abs{F'}=j$. Thus
\[
d^{(j)}\left(e\right) \leq n^{v_F-v_{F_j}}\left(Aq\right)^{e_{F[R]}-e_{F[R\cap V(F_j)]}}
\]
where
\[
	F_j = \argmax_{\substack{F'\subseteq F \\ e\left(F'\right)-e\left(F'[R]\right)=j}} n^{-v_{F'}}\left(Aq\right)^{-e_{F'[R]}}.
\]
Observe that $F_j[R] = F[R\cap V(F_j)]$.
Let $t=-\log_n\left(q\right)$ and $\tau = \gamma^{-1}n^{-1/m_2\left(R,F,t\right)}$. Using $m_2\left(R,F,t\right)\geq d_2\left(R\cap V(F_j), F_j,t\right)$ we obtain
\begin{align*}
\frac{1}{\tau^{j-1}}
&=\gamma^{j-1}\left(n^{1/m_2\left(R,F,t\right)}\right)^{(j-1)} \\
& \leq \gamma^{j-1}\left(n^{1/d_2\left(R\cap V(F_j),F_j,t\right)}\right)^{(j-1)} \\
&= \gamma^{j-1} n^{v_{F_j}-2}q^{e_{F_j[R]}}.
\end{align*}
The number of edges in $\mathcal E$ is $\abs{E(G_R)} n^{v_F-\abs R}$. Thus for $j\geq2$ we have
\[
\delta_j = \frac{\sum_{e}d^{(j)}\left(e\right)}{\tau^{j-1}e_{F_-}\abs{E\left(\mathcal E\right)}}
	\leq \frac{e_{F_-} n^2 n^{v_F-v_{F_j}}\left(Aq\right)^{e_{F[R]}-e_{F_j[R]}}}{\tau^{j-1}e_{F_-} \abs{E(G_R)} n^{v_F-\abs R}}
	\leq \gamma^{j-1} \frac{n^{\abs R}A^{e_{F[R]}-e_{F_j[R]}}q^{e_{F[R]}}}{\abs{E(G_R)}}.
\]
Finally we obtain
\[
	\delta\left(\mathcal E,\tau\right)=2^{\binom{e_{F_-}}{2}-1}\sum_{j=2}^{e_{F_-}}\delta_j2^{-\binom{j-1}{2}} \leq e_{F_-}2^{e_{F_-}^2}\gamma \frac{n^{\abs R}\left(Aq\right)^{e_{F[R]}}}{\abs{E(G_R)}}.
\]
as claimed.
\end{proof}

Having bounded the co-degree function we can obtain a collection of containers for $\mathcal E\left(G_R,F\right)$ which do not induce many edges in $\mathcal E\left(G_R,F\right)$. Viewing our contains as subgraphs of $K_{F_-,n}$ this means that they contain few copies of $F_-$ whose roots induce an edge in $G_R$. To prove a K\L R-type statement we want our containers to be sparse subgraphs of $K_{F_-,n}$. The following two lemmas establish that if $G_R$ is lower-regular then the containers obtained by \autoref{theorem:container} are indeed sparse.

\begin{lemma}\label{lemma:dense extensions} Let $(R,F)$ denote a rooted graph. For every $\delta>0$ there exists $\epsilon>0$ such that for all $p\geq\delta$ the following holds.
Suppose that $G_{R}\in \mathcal R(R, n)$ is \lreg{F}{q}{\eps} and that the bipartite graphs of $G\subseteq K_{F_-,n}$ are \reg{\eps} with density at least $p$ then 
\[ \abs{E\left(T\left(G, G_R\right)\right)} \geq \left(1-\delta\right)p^{e_{F_-}} q^{e_{F[R]}} n^{v_F}.\]
\end{lemma}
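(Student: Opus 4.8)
The plan is to factorise a partite copy of $F_-$ in $G$ whose root tuple is an edge of $G_R$ into its non-root part and its root part, using crucially that $R$ is an independent set of $F_-$ (we removed precisely the edges of $F[R]$). Write $(V_i)_{i\in V(F)}$ for the $v_F$ vertex classes. For a partite copy $\phi$ of $F_-[V(F)\setminus R]$ in $G$ and a root $i\in R$, set $C_i(\phi)=V_i\cap\bigcap_{j\in N_{F_-}(i)}N_G(\phi(j))$, the set of admissible images of $i$. Since no $F_-$-edge joins two roots, extending $\phi$ to a full partite copy of $F_-$ amounts to choosing, independently for each $i\in R$, an image $w_i\in C_i(\phi)$; hence $N(F_-,G)=\sum_\phi\prod_{i\in R}\abs{C_i(\phi)}$, where $N(F_-,G)$ is the number of partite copies of $F_-$ in $G$ and $\phi$ runs over partite copies of $F_-[V(F)\setminus R]$ in $G$. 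Call $\phi$ \emph{good} if $\abs{C_i(\phi)}\geq\eps n$ for every $i\in R$. For a good $\phi$ the tuple $(C_i(\phi))_{i\in R}$ is an admissible test tuple for the \lreg{F}{q}{\eps}ity of $G_R$, so $G_R$ contains at least $q^{e_{F[R]}}\prod_{i\in R}\abs{C_i(\phi)}$ edges with $i$-th endpoint in $C_i(\phi)$ for all $i$; each such edge, paired with $\phi$, is a partite copy of $F_-$ whose root tuple lies in $E(G_R)$.

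Summing only over good $\phi$ and invoking the identity above,
\[
\abs{E(T(G,G_R))}\ \geq\ q^{e_{F[R]}}\sum_{\phi\text{ good}}\prod_{i\in R}\abs{C_i(\phi)}\ \geq\ q^{e_{F[R]}}\Bigl(N(F_-,G)-\sum_{\phi\text{ bad}}\prod_{i\in R}\abs{C_i(\phi)}\Bigr).
\]
There are at most $n^{v_F-\abs R}$ choices of $\phi$ and each bad $\phi$ contributes at most $\eps n\cdot n^{\abs R-1}$, so the subtracted term is at most $\eps n^{v_F}$. For the main term I would apply the classical (dense) counting lemma: every bipartite graph $G[V_i,V_j]$ with $\{i,j\}\in E(F_-)$ is \reg{\eps} with density at least $p\geq\delta$, a constant, so for $\eps$ small enough in terms of $F$ and $\delta$ (and $n$ large) we get $N(F_-,G)\geq(1-\delta/3)p^{e_{F_-}}n^{v_F}$ — no sparse-regularity subtleties intervene because the densities are bounded below by a constant throughout. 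Since $p\geq\delta$ gives $\eps n^{v_F}\leq(\delta/3)p^{e_{F_-}}n^{v_F}$ once $\eps\leq\delta^{e_{F_-}+1}/3$, combining the two estimates yields $\abs{E(T(G,G_R))}\geq(1-\delta)p^{e_{F_-}}q^{e_{F[R]}}n^{v_F}$, as required; so one takes $\eps$ to be the minimum of the bound required by the counting lemma and $\delta^{e_{F_-}+1}/3$.

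The only substantial input is the dense counting-lemma lower bound for $N(F_-,G)$, which is standard; the rest is bookkeeping. The step that needs care is the definition of \emph{good}: one must require $\abs{C_i(\phi)}\geq\eps n$ \emph{simultaneously} for all $i\in R$ so that $(C_i(\phi))_{i\in R}$ is a legitimate input to the lower-regularity hypothesis, and one must verify that the bad $\phi$ are genuinely negligible — which is exactly where the hypothesis $p\geq\delta$, keeping $p^{e_{F_-}}$ bounded away from $0$, is used to absorb the $\eps n^{v_F}$ loss.
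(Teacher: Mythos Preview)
Your proof is correct and follows essentially the same approach as the paper's sketch: embed the non-root part of $F_-$ using dense regularity, observe that $R$ is independent in $F_-$ so the extension to the root vertices factorises over the common neighbourhoods $C_i(\phi)$, and then invoke the \lreg{F}{q}{\eps}ity of $G_R$ on those linear-sized sets. Your organisation---applying the full counting lemma to $F_-$ and then subtracting the crude bound $\eps n^{v_F}$ for bad $\phi$---is a minor repackaging of the paper's ``count typical tuples, then extend'' outline, but the substance is identical.
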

\begin{proof}
Observe that  the density $p$ is at least $\delta$, which is a constant. Therefore $G$ is a (dense) regular graph and standard counting arguments apply. We only sketch of the proof: using standard arguments we find roughly $n^{v_F-\abs R}p^{e_{F\left[V\left(F\right)\setminus R\right]}}$ tuples in $\bigtimes_{i\in V(F)\setminus R} V_i$ whose common neighborhoods into the partitions $G_R$ are roughly as large as expected (in particular they are of linear size). Thus for $\eps$ small enough the \lreg{F}{q}{\eps}ity of $G_R$ guarantees that every one of these tuples extends to roughly $n^{\abs R}p^{e_F-e_{F\left[V\left(F\right)\setminus R\right]}}q^{e_{F[R]}}$ copies of $F_-$ whose roots form an edge in $G_R$. Every such copy of $F_-$ contributes on edge to the multi-hypergraph $T(G, G_R)$ and we obtain the desired bound.
\end{proof}

\begin{lemma}\label{lemma:extension containers}
Let $(R,F)$ be a rooted graph with $e_{F_-}>1$. Let $\delta>0$ be small enough and let $A\geq 1$. Then there exists $c, \eps(\delta), R(\delta), \gamma(\delta, A)$ such that the following is true. Suppose that $\tau(n), q(n)\in\o{1}$ satisfy $\tau \geq \gamma^{-1}n^{-1/m_2\left(R,F,-\log_n q\right)}$. If $G_R\in \mathcal R(R,n)$ is \ue{F}{Aq} and \lreg{F}{q}{\eps} then for $n$ large enough there exists a collection $\mathcal C$ of subgraphs of $K_{F_-,n}$ such that
\begin{enumerate}
\item for every $G\subseteq K_{F_-,n}$ for which $e\left(T\left(G, G_H\right)\right) \leq \eps \tau^{e_{F_-}} q^{e_{F[R]}} n^{v_F}$ there exists $T_1,\dots,T_s\subseteq G$ with $G\subset C(T_1,\dots,T_s)\in\mathcal C$, $e\left(T_i\right)\leq c \tau n^{2}$ and $s\leq c \log (A^{e_{F[R]}}/\eps)$,
\item for every $C\in\mathcal C$ there exists $\{i,j\}\in E(F_-)$ and equitable partitions $V_i=V_{i,1}\cup\dots\cup V_{i,r}$ and $V_j=V_{j,1}\cup\dots\cup V_{j,r}$ where $r\leq R\left(\delta\right)$ such that for at least $r^2/2e_{F_-}$ pairs $x,y\in \set{r}$ we have  $e\left(C\left[V_{i,x},V_{j,y}\right]\right)\leq \delta \abs{V_{i,x}}\abs{V_{j,y}}$.
\end{enumerate}
\end{lemma}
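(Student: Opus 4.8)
The plan is to feed the hypergraph $\mathcal E\coloneqq\mathcal E(G_R,F)$ to the Saxton--Thomason container theorem (\autoref{theorem:container}). Recall that $\mathcal E$ is $e_{F_-}$-uniform on the vertex set $E(K_{F_-,n})$ (so $|V(\mathcal E)|=e_{F_-}n^2$) and that its edges are the partite copies of $F_-$ in $K_{F_-,n}$ whose roots span an edge of $G_R$. Three identities organise the whole argument: for any $G\subseteq K_{F_-,n}$, the number $e(\mathcal E[G])$ of edges of $\mathcal E$ spanned by $E(G)$ equals $e(T(G,G_R))$ (with multiplicities); $e(\mathcal E)=\abs{E(G_R)}\,n^{v_F-\abs R}$; and, since $G_R$ is $(F,q,\epsilon)$-lower-regular (take $V_i'=V_i$) and $(F,Aq)$-upper-extensible (take the empty subgraph of $F[R]$), one has $q^{e_{F[R]}}n^{v_F}\leq e(\mathcal E)\leq (Aq)^{e_{F[R]}}n^{v_F}$.

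To invoke \autoref{theorem:container} with uniformity $e_{F_-}$ and a parameter $\epsilon_{\mathrm{cont}}$ (to be taken small in terms of $\delta$ and $A$), I would first bound the co-degree function via \autoref{lemma:delta(E,gamma) bound}: with its free parameter set to a small constant $\gamma$, together with $\abs{E(G_R)}\geq q^{e_{F[R]}}n^{\abs R}$ from lower-regularity, it gives $\delta(\mathcal E,\gamma^{-1}n^{-1/m_2(R,F,-\log_n q)})\leq\gamma\,e_{F_-}2^{e_{F_-}^2}A^{e_{F[R]}}$. Since $\tau\mapsto\delta(\mathcal E,\tau)$ is non-increasing and we only assume $\tau\geq\gamma^{-1}n^{-1/m_2(R,F,-\log_n q)}$, taking $\gamma=\gamma(\delta,A)$ small enough makes $\delta(\mathcal E,\tau)\leq\epsilon_{\mathrm{cont}}/(12\,e_{F_-}!)$, so the theorem outputs the family $\mathcal C$, a fingerprint function of arity $s\leq c(e_{F_-})\log(1/\epsilon_{\mathrm{cont}})$, and fingerprint sets of size $\leq c(e_{F_-})\,e_{F_-}\tau n^2$. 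Property~1 is then immediate: if $e(T(G,G_R))\leq\epsilon\tau^{e_{F_-}}q^{e_{F[R]}}n^{v_F}$ and $\epsilon\leq\epsilon_{\mathrm{cont}}$, then $e(\mathcal E[G])\leq\epsilon_{\mathrm{cont}}\tau^{e_{F_-}}e(\mathcal E)$, so $G$ lies in some $C(T_1,\dots,T_s)$ with $T_i\subseteq E(G)$, $e(T_i)=\O{\tau n^2}$, and $s\leq c\log(A^{e_{F[R]}}/\epsilon)$ after enlarging $c$.

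The heart of the argument is Property~2, for which I would pit the container bound $e(\mathcal E[C])\leq\epsilon_{\mathrm{cont}}e(\mathcal E)\leq\epsilon_{\mathrm{cont}}(Aq)^{e_{F[R]}}n^{v_F}$ against the supersaturation estimate \autoref{lemma:dense extensions}. Fix $C\in\mathcal C$ and assume for contradiction that $C$ fails the stated conclusion. Apply the classical (dense) regularity lemma of Szemer\'edi to $C$ with the partition $(V_1,\dots,V_{v_F})$ pre-imposed, obtaining equitable partitions $V_i=V_{i,1}\cup\dots\cup V_{i,r}$ with $r\leq R(\delta)$ in which all but an $\epsilon_{\mathrm{reg}}$-fraction of the pairs is $\epsilon_{\mathrm{reg}}$-regular in $C$. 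Applying the failure assumption to this very partition, for each $\{i,j\}\in E(F_-)$ more than $(1-\tfrac{1}{2e_{F_-}})r^2$ of the pairs $(V_{i,x},V_{j,y})$ have density above $\delta$, hence at least $(1-\tfrac{1}{2e_{F_-}}-\epsilon_{\mathrm{reg}})r^2$ of them are simultaneously $\epsilon_{\mathrm{reg}}$-regular and of density above $\delta$. Picking one class uniformly and independently at random from each $V_i$, a union bound over the $e_{F_-}$ edges of $F_-$ (using $e_{F_-}\epsilon_{\mathrm{reg}}<\tfrac12$) shows that with positive probability we obtain classes $V_i^*$, each of size $\The{n}$, with $C[V_i^*,V_j^*]$ $\epsilon_{\mathrm{reg}}$-regular of density above $\delta$ for every $\{i,j\}\in E(F_-)$. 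Because the classes are linear-sized, the restriction of $G_R$ to $\bigcup_{i\in R}V_i^*$ remains lower-regular with parameter $\O{\epsilon R(\delta)}$; so for $\epsilon$ small relative to $\delta$ the hypotheses of \autoref{lemma:dense extensions} hold with $p=\delta$, producing $\Om{q^{e_{F[R]}}n^{v_F}}$ partite copies of $F_-$ in $C[\bigcup_i V_i^*]$ rooted in $G_R$, with implied constant of order $\delta^{e_{F_-}}/R(\delta)^{v_F}$. Thus $e(\mathcal E[C])=\Om{q^{e_{F[R]}}n^{v_F}}$, which contradicts the container bound once $\epsilon_{\mathrm{cont}}$ (hence $\epsilon$) is chosen with $\epsilon_{\mathrm{cont}}A^{e_{F[R]}}R(\delta)^{v_F}<(1-\delta)\delta^{e_{F_-}}$; therefore every $C\in\mathcal C$ obeys Property~2.

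The constants are thus fixed in the order $\delta\to\epsilon_{\mathrm{reg}}\to R(\delta)\to\epsilon_{\mathrm{cont}}\to\epsilon\to\gamma$: $\epsilon_{\mathrm{reg}}$ small enough for the random-choice union bound and for \autoref{lemma:dense extensions}; $R(\delta)$ the resulting bound on the number of regularity classes; $\epsilon_{\mathrm{cont}}$ small enough for the supersaturation contradiction; $\epsilon\leq\epsilon_{\mathrm{cont}}$ and also small enough that restricting $G_R$ to linear classes preserves sufficient lower-regularity; and $\gamma$ small enough for the co-degree bound, with $c$ universal in $e_{F_-}$. I expect the supersaturation step of Property~2 --- extracting a dense, $\epsilon_{\mathrm{reg}}$-regular blow-up of $F_-$ inside an arbitrary non-sparse container, and verifying that passing to the chosen classes does not spoil the lower-regularity of $G_R$ --- to be the main obstacle, whereas the container-theoretic half, granted \autoref{lemma:delta(E,gamma) bound}, is essentially bookkeeping.
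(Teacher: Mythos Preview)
Your overall strategy---apply the container theorem to $\mathcal E(G_R,F)$ using the co-degree bound, then derive Property~2 by combining the container's edge bound with the supersaturation of \autoref{lemma:dense extensions} inside a regularity partition of $C$---is exactly the paper's approach. Your random-choice argument for locating a single ``all-good'' box is the contrapositive of the paper's pigeonhole over all $r^{v_F}$ boxes; both work.

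There is, however, one genuine quantitative gap. You invoke \autoref{theorem:container} with a \emph{constant} parameter $\epsilon_{\mathrm{cont}}$, which yields $e(\mathcal E[C])\le \epsilon_{\mathrm{cont}}\,e(\mathcal E)\le \epsilon_{\mathrm{cont}}(Aq)^{e_{F[R]}}n^{v_F}$. To contradict the supersaturation bound $e(\mathcal E[C])\ge (1-\delta)\delta^{e_{F_-}}q^{e_{F[R]}}(n/r)^{v_F}$ you are then forced to take $\epsilon_{\mathrm{cont}}$---and hence $\epsilon\le\epsilon_{\mathrm{cont}}$---small in terms of $A$. This violates the lemma's declared dependency $\epsilon=\epsilon(\delta)$, and that dependency is not cosmetic: it feeds into $\mu=\mu(\beta)$ in \autoref{theorem:lreg extension klr}, which in the applications (e.g.\ \autoref{lemma:force regular}) must be chosen \emph{before} $A$ is known.

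The paper sidesteps this by applying the container theorem with the \emph{data-dependent} parameter
\[
\epsilon' \;=\; \frac{\epsilon\, q^{e_{F[R]}} n^{\abs R}}{\abs{E(G_R)}}\,\in\,\bigl[\epsilon A^{-e_{F[R]}},\,\epsilon\bigr],
\]
so that $\epsilon'\,e(\mathcal E)=\epsilon\,q^{e_{F[R]}}n^{v_F}$ exactly. The $A$-factor then cancels in the Property~2 contradiction, leaving $\epsilon$ to depend on $\delta$ only; the $A$-dependence is pushed entirely into $\gamma$ (via the co-degree bound $\delta(\mathcal E,\tau)\le \gamma\cdot\mathrm{const}\cdot \epsilon' A^{e_{F[R]}}/\epsilon$) and into the bound $s\le c\log(1/\epsilon')\le c\log(A^{e_{F[R]}}/\epsilon)$, both of which the lemma explicitly permits. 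Incorporating this one normalisation into your write-up makes it match the stated dependencies.
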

\begin{proof}
The constants $c,\mu(\delta), R(\mu), \eps(\delta, \mu, R)$ and $\gamma(\eps, A)$ will be determined later.
Let $\eps'=\eps q^{e_{F[R]}}n^{\abs R}/e\left(G_R\right)$ and $\mathcal E=\mathcal E\left(G_R,F\right)$. Since $G_R$ is \ue{F}{Aq} we can invoke  \autoref{lemma:delta(E,gamma) bound} to obtain the bound
\[ \delta\left(\mathcal E, \tau\right) \leq  \gamma e_{F_-} 2^{e^2_{F_-}} \frac{n^{\abs R}\left(Aq\right)^{e_{F[R]}}}{e(G_H)} = \gamma e_{F_-} 2^{e_{F_-}^2} \frac{\eps' A^{e_{F[R]}}}{\eps}.\]
For $\gamma(\eps, A)$ small enough we obtain
\[
\delta\left(\mathcal E, \tau\right)  \leq \frac{\eps'}{12 r!},
\]
which is what we need to apply \autoref{theorem:container} with $\mathcal E\leftarrow\mathcal E$, $\eps\leftarrow \eps'$, $\tau \leftarrow \tau, r\leftarrow e_{F_-}$ to obtain a collection of containers $\mathcal C$. We will now show that these containers (when viewed as subgraphs of $K_{F_-,n}$) satisfy the conditions of our Lemma.

So let $G\subseteq K_{F_-,n}$ with 
\[e\left(T\left(G, G_R\right)\right)\leq \eps\tau^{e_{F_-}} q^{e_{F[R]}} n^{v_F} = \eps' \tau^{e_{F_-}}n^{v_F-\abs R} e(G_R) = \eps' \tau^{e_{F_-}}  e\left(\mathcal E\right).\]
Define $I=E\left(G\right)$ and observe that $e\left(\mathcal E\left[I\right]\right)=e\left(T\left(G,G_R\right)\right)$ and thus $e\left(\mathcal E\left[I\right]\right) \leq \eps' \tau^{e_{F_-}}e\left(\mathcal E\right)$. Therefore we obtain $T_1,\dots,T_s\subseteq G\subseteq C\left(T_1,\dots,T_s\right)\in\mathcal C$ with $e\left(T_i\right) \leq c'\tau v\left(\mathcal E\right) = c\tau n^{2}$ and $s\leq c \log\left(1/\eps'\right)$. Since $G_R$ is \ue{F}{Aq} we have
\[
\log \left(\frac{1}{\eps'}\right) = \log\left(\frac{e(G_R)}{\eps q^{e_{F[R]}}n^{\abs R}}\right) \leq \log\left(\frac{A^{e_{F[R]}}}{\eps}\right),
\]
which proves the bound on $s$.

It remains to show that every $C\in\mathcal C$ contains a sparse partition (in the sense of 2.). To this end, for $\mu$ small enough depending on $\delta$, consider a $\mu$-regular partition of $C$ which refines the initial partition $\left(V_i\right)_{i\in V(F)}$. For every $i$ we obtain a partition $V_i=V_{i,1}\cup\dots\cup V_{i,r}$ for some $r\leq R\left(\mu\right)$. Now consider $C_x= C\left[V_{1,x_1}\cup\dots\cup V_{v_{F},x_{v_F}}\right]$ for some $x\in \set{r}^{v_F}$. For $\eps$ small enough depending on $R,\mu$ the $\abs R$-graph $G_{R,x}=G_R[V\left(C_x\right)]]$ is \lreg{F}{\mu}{q}. Thus if all pairs in $C_x$ are $\mu$-regular with density at least $\delta$ then by  \autoref{lemma:dense extensions} for $\mu$ small enough depending on $\delta$ 
\[ e\left(T\left(C_x, G_{R,x}\right)\right) \geq \left(1-\delta\right)\delta^{e_{F_-}}q^{e_{F[R]}}\left(\frac{n}{2r}\right)^{v_F}.\]
But $e\left(T\left(C_x, G_{R,x}\right)\right)$ is at most
\[
e\left(\mathcal E[C]\right)\leq \eps' e\left(\mathcal E\right) = \eps' n^{v_F-\abs R}e\left(G_R\right) = \eps n^{v_F}q^{e_{F[R]}},
\]
which is a contradiction for $\eps$ small enough depending on $R, \delta$.

Thus for every $x$ there exists $\{i,j\}\in E(F)$ such that $C\left[V_{i,x_i},V_{j,x_j}\right]$ is either sparse or not $\mu$-regular. By the pigeonhole principle at least an $1/e_F$-fraction of the $x$ nominate the same edge $\left\{i,j\right\}$ and every pair $V_{i,a}, V_{j,b}$ is nominated by at most $r^{v_F-2}$ different $x$. Finally at most an $\mu$-fraction of these pairs is not $\mu$-regular. Therefore we have found $i,j$ such that at least $r^2/(2e_{F_-})$ of the pairs $V_{i,\cdot}, V_{j,\cdot}$ have density at most $\delta$.
\end{proof}

The proof of \autoref{theorem:lreg extension klr} now follows from a standard counting argument:
\extensionklr*
\begin{proof}
The proof will require a number of constants which will be fixed during the proof. Their dependencies are as follows:
$\delta(\beta)$, $\eps(\delta)$,  $R(\delta)$, $\gamma(\delta, A)$, $\hat s(A, \eps)$, $\eta(\hat s,\gamma)$, $\alpha(\eps, \gamma, \eta)$, $\mu(\eps, R)$.

We invoke \autoref{lemma:extension containers} with $\delta\leftarrow\delta$, $A\leftarrow A$ and obtain constants $c$, $\eps\left(\delta\right)$, $R\left(\delta\right)$ and $\gamma\left(\delta,A\right)$.

Fix  $\tau=\eta m/n^2$. For $\alpha$ small enough depending on $\gamma$ and $\eta$ we have $\tau\geq \gamma^{-1} n^{-1/m_2\left(R,F,-\log_n q\right)}$ and for $\alpha$ small enough depending on $\eps$ and $\eta$ we have
\[
	\eps \tau^{e_{F_-}}q^{e_{F[R]}}n^{v_F} \geq \alpha(m/n^2)^{e_{F_-}}q^{e_{F[R]}}n^{v_F}.
\]
Therefore for $\mu\leq\eps$ small enough \autoref{lemma:extension containers} guarantees the existence of a container $T_1,\dots,T_s\subseteq G \subseteq C\left(T_1,\dots,T_s\right)$ with $s\leq c \log\left(A^{e_{F[R]}}/\eps\right)\eqqcolon\hat s$ whenever $G\in\G{F_-}{n}{m}{\mu}$ does not satisfy $e\left(T\left(G, G_R\right)\right) > \alpha \left(m/n^2\right)^{e_{F_-}}q^{e_{F[R]}}n^{v_F}$.

To count all such graphs $G$ we fix $T=\left(T_1,\dots,T_s\right)$ and then pick $G\subseteq K_{F_-,n}$ u.a.r.\ among all graphs with exactly $m$ edges in each bipartite graph.
Following \cite{SaxtonThomason:container} we define the following events
\begin{align*}
	E_T&\colon T_1\cup\dots\cup T_s\subseteq G \subseteq C(T) \text{ and $G\in\G{F_-}{n}{m}{\mu}$,} \\
	F_T&\colon T_1\cup\dots\cup T_s\subseteq G,\\
	G_T&\colon G \subseteq C(T) \text{ and $G\in\G{F_-}{n}{m}{\mu}$.}
\end{align*}

We firstly show that $\sum_T \Pr{F_T}\leq 2^m$. Note that this is the expected number of tuples $T\subseteq G$. The maximum number of tuples $T\subseteq G$ is at most
\begin{align*}
\sum_{\abs{T_1},\dots,\abs{T_{\hat s}}} \prod_{i\leq \hat s} \binom{e_{F_-} m}{\abs{T_i}}
&\leq \left(c\tau n^{2}\right)^{\hat s}\binom{e_{F_-}m}{c\tau n^{2}}^{\hat s} \\
&\leq \left(c\tau n^{2}\right)^{\hat s}\left(\frac{e e_{F_-}m}{c\tau n^{2}}\right)^{\hat s c\tau n^2}  \\
&= \left(c\eta m\right)^{\hat s}\left(\frac{e e_{F_-}}{c\eta}\right)^{\hat s c\eta m}  \\
&\leq 2^m,
\end{align*}
for $\eta$ small enough depending on $\hat s$.

Secondly we show that $\Pr{G_T\mid F_T}\leq \left(\beta/2\right)^m$ and thus
\[
\sum_T \Pr{E_T}  = \sum_T\Pr{G_T \mid F_T} \Pr{F_T} \leq \beta^m,
\]
which implies the Theorem.

For fixed $T$ and $C\left(T\right)$ let $\{i,j\} \in E(F_-)$ and $V_i=V_{i,1}\cup\dots\cup V_{i,r}$ and $V_j=V_{j,1}\cup\dots\cup V_{j,r}$ be given by property (2) of \autoref{lemma:extension containers}. For $\mu\left(R\right)$ small enough we use the \reg{\mu}ity of  $G\left[V_i,V_j\right]$ to require \[
\abs{G\left[V_{i,x},V_{j,x}\right]}\geq \left(1-\mu\right)\left(\frac{n}{r}\right)^2 \frac{m}{n^2} \geq m/2r^2
\]
for every $x,y\in\set{r}$ while $\abs{C\left[V_{i,x},V_{j,x}\right]}\leq \delta\left(\frac{n}{r}\right)^2$ for at least $r^2/2e_{F_-}$ choices of $x,y$. Let $C'=\bigcup C\left[V_{i,x},V_{j,x}\right]$ where the union runs over $r^2/2e_{F_-}$ sparse pairs. We have $\abs{C'}\leq\delta n^2/2e_{F_-}$ and for $G$ to be \reg{\mu} we require $\abs{G\cap C'}\geq m/4e_{F_-}$. We conclude for $\eta\left(\gamma,c\right)$ small enough
\begin{align*}
	\Pr{G_T\mid F_T}
&\leq \Pr{\abs{G\cap C'}\geq m/4e_{F_-}\mid F_T} \\
&\leq \Pr{\abs{\left(G-T\right)\cap C'}\geq m/4e_{F_-}-\hat sc\tau n^2} \\
&\leq \Pr{\abs{\left(G-T\right)\cap C'}\geq m/6e_{F_-}} \\
&\leq \binom{\delta \frac{n^2}{2e_{F_-}}}{\frac{m}{6e_{F_-}}} \left(\frac{m}{n^2}\right)^{m/6e_{F_-}} \leq \left(3e\delta\right)^{m/6e_{F_-}} \leq \left(\frac{\beta}{2}\right)^{m},
\end{align*}
for $\eta(\hat s)$ and $\delta(\beta)$ small enough.
\end{proof}



\bibliographystyle{plain}
\bibliography{refs}

\begin{thebibliography}{10}

\bibitem{Balogh20103653}
J\'ozsef Balogh and Jane Butterfield.
\newblock Online {R}amsey games for triangles in random graphs.
\newblock {\em Discrete Mathematics}, 310(24):3653 -- 3657, 2010.

\bibitem{BaloghMorrisSamotij05}
J{\'o}zsef Balogh, Robert Morris, and Wojciech Samotij.
\newblock Independent sets in hypergraphs.
\newblock {\em Journal of the American Mathematical Society}, 28(3):669--709,
  2015.

\bibitem{doi:10.1137/110826308}
Michael Belfrage, Thorsten Mütze, and Reto Spöhel.
\newblock Probabilistic one-player {R}amsey games via deterministic two-player
  games.
\newblock {\em SIAM Journal on Discrete Mathematics}, 26(3):1031--1049, 2012.

\bibitem{Friedgut:2003:RGA:970118.970124}
Ehud Friedgut, Yoshiharu Kohayakawa, Vojt\v{e}ch R\"{o}dl, Andrzej
  Ruci\'{n}ski, and Prasad Tetali.
\newblock {R}amsey games against a one-armed bandit.
\newblock {\em Combinatorics, Probability and Computing}, 12(6):515--545, 2003.

\bibitem{Gerke:2005}
Stefanie Gerke and Angelika Steger.
\newblock The sparse regularity lemma and its applications.
\newblock In Bridget~S. Webb, editor, {\em Surveys in Combinatorics 2005}.
  Cambridge University Press, 2005.

\bibitem{janson2011random}
Svante Janson, Tomasz {\L}uczak, and Andrzej Ruci\'{n}ski.
\newblock {\em Random Graphs}.
\newblock Wiley Series in Discrete Mathematics and Optimization. Wiley, 2011.

\bibitem{Kohayakawa1997216}
Yoshiharu Kohayakawa.
\newblock {S}zemer\'edi's regularity lemma for sparse graphs.
\newblock {\em Foundations of Computational Mathematics}, pages 216--230, 1997.

\bibitem{KohayakawaLuczakRodl97}
Yoshiharu Kohayakawa, Tomasz {\L}uczak, and Vojt\v{e}ch R\"{o}dl.
\newblock On {$K_4$}-free subgraphs of random graphs.
\newblock {\em Combinatorica}, 17:173--213, 1997.

\bibitem{KurekRucinski05}
Andrzej Kurek and Andrzej Ruci\'{n}ski.
\newblock Two variants of the size {R}amsey number.
\newblock {\em Discussiones Mathematicae Graph Theory}, 25:141--149, 2005.

\bibitem{CPC:4654516}
Martin Marciniszyn, Reto Sp\"ohel, and Angelika Steger.
\newblock Online {R}amsey games in random graphs.
\newblock {\em Combinatorics, Probability and Computing}, 18:271--300, 3 2009.

\bibitem{CPC:4654504}
Martin Marciniszyn, Reto Sp\"ohel, and Angelika Steger.
\newblock Upper bounds for online {R}amsey games in random graphs.
\newblock {\em Combinatorics, Probability and Computing}, 18:259--270, 3 2009.

\bibitem{doi:10.1137/S0097539793250767}
Alessandro Panconesi and Aravind Srinivasan.
\newblock Randomized distributed edge coloring via an extension of the
  {C}hernoff--{H}oeffding bounds.
\newblock {\em SIAM Journal on Computing}, 26(2):350--368, 1997.

\bibitem{rodl1995}
Vojt\v{e}ch Rödl and Andrzej Ruciński.
\newblock Threshold functions for {R}amsey properties.
\newblock {\em Journal of the American Mathematical Society}, 8(4):917--942,
  1995.

\bibitem{SaxtonThomason:container}
David Saxton and Andrew Thomason.
\newblock Hypergraph containers.
\newblock {\em Inventiones mathematicae}, pages 1--68, 2015.

\bibitem{Spencer:countingextensions}
Joel Spencer.
\newblock Counting extensions.
\newblock {\em Journal of Combinatorial Theory, Series A}, 55(2):247 -- 255,
  1990.

\end{thebibliography}

\end{document}